\documentclass[a4paper,11pt]{article}
\usepackage[pagewise]{lineno}
\usepackage{amsmath}
\usepackage{amssymb}
\usepackage{mathrsfs}
\usepackage{amsfonts}
\usepackage{amsthm}
\usepackage{pstricks, pst-node, pst-text, pst-3d,psfrag}
\usepackage{graphicx}
\usepackage{indentfirst}
\usepackage{enumerate}
\usepackage{subfigure}
\usepackage{cite}
\usepackage[colorlinks=true]{hyperref}
\hypersetup{urlcolor=blue, citecolor=red}
\usepackage{soul}
\usepackage{footmisc}
\usepackage{authblk}

\theoremstyle{plain}
\newtheorem{thm}{Theorem}[section]
\newtheorem{prop}[thm]{Proposition}
\newtheorem{thmm}{Theorem}[subsection]
\newtheorem{cor}[thm]{Corollary}
\newtheorem{coro}{Corollary}[subsection]
\newtheorem{lem}[thm]{Lemma}
\theoremstyle{definition}
\newtheorem{defn}[thm]{Definition}

\newtheorem{rmk}[thm]{Remark}

\numberwithin{equation}{section}

\makeatletter % `@' now normal "letter"
\@addtoreset{equation}{section}
\makeatother  % `@' is restored as "non-letter"

\begin{document}
	
	\title{Birkhoff center and recurrent behavior of differentially positive systems on a homogeneous space}

	\setlength{\baselineskip}{16pt}
	
	\author[1]{Lin Niu\footnote{Supported by the National Natural Science Foundation of China No.12201034.}}
	\author[2]{Yi Wang\footnote{Supported by the National Key R\&D Program of China (2024YFA1013603, 2024YFA1013600), NSFC No.12331006, and Strategic Priority Research Program of Chinese Academy of Sciences (XDB0900100).}}
	\author[3]{Yufeng Zhang\footnote{Supported by the National Natural Science Foundation of China (No.12401240).}} 
	
	% 单位列表，用 \affil 声明，[1]、[2] 对应作者的角标
	\affil[1]{\footnotesize{School of Mathematics and Physics, University of Science and Technology Beijing, Beijing, 100083, P. R. China}}
	\affil[2]{\footnotesize{School of Mathematical Sciences, University of Science and Technology of China, Hefei, Anhui, 230026, P. R. China}}
	\affil[3]{\footnotesize{Department of Mathematical and Statistical Sciences, University of Alberta, Edmonton, AB T6G 2G1, Canada}}

\date{}

    \maketitle
	
	%---------------------SECTION DIVIDE LINE---------------------------
	\begin{abstract}
		We study the structure of the Birkhoff center and the recurrent behavior of differentially positive systems whose linearizations along trajectories preserve a homogeneous cone field on a homogeneous space. Such cone fields arise naturally from general relativity and Lie theory. We establish an order-structural dichotomy for the Birkhoff center: every connected component of the Birkhoff center, as well as the support of any invariant measure, is either strongly ordered or unordered. This yields a comprehensive characterization of recurrent dynamics for differentially positive systems on homogeneous spaces, interpreted through the lens of the underlying order relation.
		
		\vskip 3mm
		
		\par
		\textbf{Keywords}: Differential positivity, Causal order, Homogeneous space, Homogeneous cone field, Birkhoff center
	\end{abstract}
	
	\par \quad \quad \textbf{AMS Subject Classification (2020)}: 37C65, 37B20, 22F30, 83C75
	
\section{Introduction}
	
We investigate problems formulated on spaces endowed with special geometries, such as homogeneous spaces. These are manifolds that admit a transitive Lie group action. A fundamental property of such spaces is that additional structures defined on the manifold are invariant, or homogeneous, with respect to the underlying symmetry. A prototypical instance of such an additional geometric structure is that of a homogeneous cone fields on a given homogeneous space (see, e.g., \cite{HilgertNeeb93,HilgertHofmannLawson,Neeb91}). Roughly speaking, there are at least two motivations for studying homogeneous cone fields (see \cite{Lawson89}). First, in the causality theory of spacetimes, naturally arising symmetry conditions may lead one precisely to this situation. Second, historically, it was Lie's idea that finding the groups under which a differential equation is invariant could be useful for analyzing that differential equation. If we view cone fields on a manifold as a generalization of vector fields and seek curves consistent with them, then a similar motivation for considering invariance arises in this setting.

In the present paper, we consider a class of nonlinear dynamical systems for which the linearizations along trajectories preserve a homogeneous cone field on a homogeneous space. Such systems are called differentially positive systems (see, e.g., \cite{ForniandSepulchre16,NW25}). The concept of differential positivity was first introduced by Forni and Sepulchre \cite{ForniandSepulchre14,ForniandSepulchre16} in their study of the dynamics for individual orbits of nonlinear systems such as the pendulum model. Mostajeran and Sepulchre \cite{MostajeranSepulchre18homogeneous,MostajeranSepulchre18SIAM} further studied the differential positivity on homogeneous spaces. In \cite[Section 4]{MostajeranSepulchre18SIAM}, the differential positivity is applied to consensus protocols on the $N$-torus to analyze synchronization phenomena in smart grids and engineered complex oscillator networks. Recently, the present authors \cite{NW25,NWZ26} characterized the generic behavior of differentially positive systems, mainly from the perspective of $\omega$-limit sets for individual orbits.

In the theory of dynamical systems, the presence of non-periodic recurrence is widely recognized as the first indication of complicated asymptotic behavior. To capture the essence of non-periodic recurrence, the Birkhoff center, defined as the closure of the set of all recurrent points, serves as a pivotal hub of recurrence behavior (cf.\cite{K95,M12}). It constitutes not only the ultimate destination of recurrent behaviors but also the fundamental skeleton of recurrence structure. Research on the Birkhoff center is primarily concerned with the statistical behavior of orbits and their recurrence properties under invariant measures. Notably, the well-known Poincaré recurrence theorem establishes that invariant measures are concentrated on the Birkhoff center (see, e.g., \cite{J20,M12}).

The purpose of the present paper is to provide a characterization of the structure of the Birkhoff center and give a comprehensive description of the recurrence behavior for differentially positive systems on a homogeneous space.
For this purpose, we begin by recalling some insights for the cone fields on a smooth manifold. In a general way, a cone field, assigning to each point of a smooth manifold $M$ a cone in the tangent space, naturally provides a local or differential way of thinking about order relations on $M$. Meanwhile, the order relation itself can be viewed as a corresponding global concept. It is the so-called conal curves that serve as a bridge connecting the cone field and the order relation on $M$. Here, a conal curve is a continuous and piecewise continuously differentiable curve whose tangent vector lies in the cone at every point along the curve wherever it is defined. As a matter of fact, a cone field on $M$ induces a ``conal order relation" as follows: two points $x,y\in M$ are {\it conal ordered}, denoted by $x\leq_M y$, if there exists a conal curve on $M$ beginning at $x$ and ending at $y$.
This order relation on a homogeneous space provides a general framework for investigating the differentially positive systems.

The aforementioned idea of the order relation originally stems from the causality theory of spacetimes studied by Hawking \cite{Hawkingandellis73}, Penrose \cite{Penrose72} and others \cite{Beemandehrlich81,Wald84}, where a time-orientable spacetime naturally determines, at each point, a Lorentzian cone in the tangent space, i.e., a closed, convex, pointed cone representing all future-directed non-spacelike directions. Such cone field defines the class of non-spacelike (or causal) curves, which are precisely those curves whose tangent vectors lie in the cone at each point along the curve.
In this setting, each point of spacetime corresponds to an event, and a signal can be sent from $p$ to $q$ if there exists a future-directed causal curve from $p$ to $q$. That is, causally related points in spacetime are connected by a non-spacelike curve determined by the Lorentzian cone field.
This causality theory provides significant motivation and guide for the present work.

From this point of view, the conal order ``$\leq_M$" induced by a cone field on a manifold $M$ is highly related to the manifold's topological and geometric structure and the cone field. Due to this, ``$\leq_M$" may be not a global partial order. In fact, there are examples of $M$ that contain the closed conal curves (e.g., the closed timelike curves in time-orientable space-times \cite[Chapter 5]{Hawkingandellis73}), which reveals that the antisymmetry of the conal order relation ``$\leq_M$" fails.
Whether the conal order associated with a cone field can be extended to a partial order on the global manifold is a central problem in \cite{Lawson89}. The equivalence between globality of the conal order in a homogeneous space and globality of the Lie wedge in the Lie group has been shown in \cite[Section 5]{Lawson89} (see also \cite[Theorem 1.6]{Neeb91} and \cite[Section 4.3]{HilgertNeeb93}). It is known that the differential positivity is indeed the monotonicity when the cone field induces a global partial order, and hence, differentially positive systems can be regarded as a natural generalization of the so-called classical monotone systems from flat spaces to nonlinear manifolds (see \cite[Appendix A.3]{NW25}). By the way, from the perspective of Lie theory, the recent works in \cite{Neebolafsson21,Neebolafsson21b,Neebolafsson23,Morinellineeb21,Neeborstedolafsson21}
form an ongoing project to study the causal structures (represented by homogeneous cone fields) on homogeneous spaces.

Precisely due to the absence of antisymmetry in the order relation (i.e., we do not exclude the occurrence of closed conal curves in this article), new dynamical phenomena then arise: If $x$ is a recurrent point and the orbit of $x$ is so-called pseudo-ordered (i.e., there is some $T>0$ such that $x\neq \varphi_T(x)$ and either $x\leq_M \varphi_T(x)$ or $\varphi_T(x)\leq_M x$ holds), then the omega limit set $\omega(x)$ of $x$ is ordered, furthermore, for any $y_1, y_2 \in \omega(x)$, both $y_1\leq_M y_2$ and $y_2\leq_M y_1$ hold, see Proposition \ref{pseudoorder-orbit}. 
(As a comparison, for a partial order in the classical monotone theory, every pseudo-ordered orbit converges to an equilibrium due to the monotone convergence criterion, see, e.g., \cite[Theorem 1.2.1]{H95}.)
This new phenomenon underlies our discussion of the properties of the limit sets, recurrence sets, and the Birkhoff center.

Meanwhile, the relation ``$\leq_M$" is not necessarily closed, i.e., the set $\{(x,y)\in M\times M: x\leq_M y \}$ is not a closed subset in $M\times M$ (see Lawson \cite[p.299]{Lawson89} or Neeb \cite[p.470]{Neeb91}). For instance, such a set is not necessarily closed in Minkowski space (see Hawking and Ellis\cite[p.183]{Hawkingandellis73} or Penrose \cite[p.12]{Penrose72}).

To the best of our knowledge, there are several ways to deal with this feature of the order relation. One is to directly discuss the closure of the order relation ``$\leq_M$" (see, e.g., \cite[Section 7]{Lawson89} and \cite{Neeb91}). Another is to impose causality conditions in the causality theory of spacetimes that force the relation ``$\leq_M$" to be closed, for example, causal simplicity or global hyperbolicity (see \cite{Hawkingandellis73, Beemandehrlich81}).
However, both of the above approaches have shortcomings: In general it is not true that the closure of an order is again a transitive relation (see \cite[p.470]{Neeb91}), and 
a causal simple or globally hyperbolic spacetime is strongly causal which excludes the occurrence of closed timelike curves (see \cite[p.73]{Beemandehrlich81}).

In the present work, we focus on the so-called strongly differentially positive system (see Definition \ref{differential positive}), whose flow $\varphi_t$ naturally preserves the conal order (i.e., if $x\leq_M y$ with $x\neq y$, then $\varphi_t(x)\ll_M \varphi_t(y)$ for all $t>0$, see Proposition \ref{monotone property}). Here, we write $x\ll_M y$ (see Definition \ref{def-order}) if there exists a so-called \textit{strictly} conal curve (see Definition \ref{conal-curve}), whose tangent vector lies in the interior of the cone at every point along the curve, on $M$ beginning at $x$ and ending at $y$. Therefore, when studying the asymptotic behavior of a strongly differentially positive system from the perspective of the order relation, we focus more on the properties of the strong order relation ``$\ll_M$" than the order relation ``$\leq_M$". For this reason, we first introduce the following assumption:
\begin{enumerate}
	\item[{\bf (H2)}] The conal order ``$\leq_M$" is quasi-closed. 
\end{enumerate}
\noindent Here, the quasi-closedness of ``$\leq_M$" in (H2) means that {\it $x\leq_M y$ whenever $x_n\to x$ and $y_n\to y$ as $n\to \infty$ and $x_n\ll_M y_n$ for all integer $n\ge 0$} (see Definition \ref{quasi-closed}).
Roughly speaking, Hypothesis (H2) is a suitable substitute for the closedness of the order relation ``$\leq_M$". %Refer to \cite{NW25} for more details.

The strong order relation ``$\ll_M$", more precisely, the strictly conal curve on $M$ is inspired by the future directed timelike curves in the causality theory of spacetimes. Furthermore, the events in a spacetime that can be reached by a material particle starting at $p$ comprise the ``chronological future" of $p$, denoted by $I^+(p)$. In a general way, for $p\in M$,
\begin{itemize}
	\item chronological future set $I^+(p)=\{q\in M \colon p\ll_M q\}$,
	\item chronological past set $I^-(p)=\{q\in M \colon q\ll_M p\}$.
\end{itemize}
Past and future sets have played an important role in singularity theory in general relativity as treated in Penrose \cite{Penrose72} or Hawking and Ellis \cite{Hawkingandellis73} via the allied concept of chronological boundaries (i.e., the sets $\partial I^{\pm}(p)$).

In this paper, for every point $p$ in $M$, we assume that
\begin{enumerate}
	\item[{\bf (H3)}] The chronological sets $I^+$ and $I^-$ are continuous.
\end{enumerate}
The definition of continuity of the chronological sets (see Definition \ref{continuity-for-chronological sets}) originates from Hawking and Sachs \cite[Section 1.6]{Hawkingandsachs74}, where they provided a crucial link between the continuity of the chronological sets and that of the time functions.
In the present work, hypothesis (H3) enables us to discuss the properties of the boundary of the chronological sets and to obtain the so-called \textit{intersection principle for the chronological boundary} (see Lemma \ref{endpoint}).

Based on this intersection principle and the assumptions above, we present our main results. We establish \textit{an order-structural dichotomy} of the Birkhoff center for differentially positive systems on a homogeneous space. Here, a set $B$ is said to be \textit{strongly ordered} if any two points in $B$ are strongly ordered (w.r.t., ``$\ll_M$") and \textit{unordered} if any two points in $B$ are unordered (w.r.t., ``$\leq_M$"), see Definition \ref{def-order}.

\renewcommand{\thethmm}{\Alph{thmm}}

\begin{thmm}\label{Thm}
	Let $B$ be a connected component of Birkhoff center. Then $B$ is either strongly ordered or unordered.
\end{thmm}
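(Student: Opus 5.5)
Throughout, $B$ is a connected component of the Birkhoff center, and the Birkhoff center is the closure of the set $R$ of recurrent points. The plan is to show that the two alternatives are complementary open pieces of a connectedness argument.

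\textbf{Step 1: a pointwise dichotomy for recurrent points.} For recurrent $x,y\in R$ we prove: either $x\ll_M y$ and $y\ll_M x$ hold simultaneously, or $x,y$ are unordered. Suppose $x\le_M y$ with $x\neq y$. Proposition~\ref{monotone property} gives $\varphi_t(x)\ll_M\varphi_t(y)$ for all $t>0$. Take $t_n\to\infty$ with $\varphi_{t_n}(y)\to y$. After passing to a subsequence and using recurrence of $x$, we want $\varphi_{t_n}(x)\to x$ along the same times.
\begin{itemize}
\item This is the delicate point. For a pair of recurrent points one first reduces to the case where the pair $(x,y)$ is recurrent under the product flow, e.g.\ via minimality or by replacing $x$ by a point of $\omega(x)$.
\item Quasi-closedness (H2) then gives $x\le_M y$ again. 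This only recovers what we assumed, so instead we run the argument backwards: $y\ll_M\varphi_s(y)$-type comparisons, using that $\ll_M$ is open.
\item Using (H3) and the intersection principle Lemma~\ref{endpoint}, we aim to show that $y\in I^+(x)$ and $x\in \overline{I^+(y)}$. Then, by openness and recurrence, $x\in I^+(y)$.
\end{itemize}
In the same spirit, Proposition~\ref{pseudoorder-orbit} handles the case $y=\varphi_T(x)$, where both orders hold in $\omega(x)$.

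\textbf{Step 2: extend the dichotomy to the closure $\overline{R}$.} Define on $B$ the relation $x\sim y$ iff $x\ll_M y$ and $y\ll_M x$. The chronological sets $I^\pm(p)$ are open, since strictly conal curves can be perturbed and the cone field is homogeneous. Hence $S=\{(x,y):x\ll_M y,\ y\ll_M x\}$ is open in $B\times B$. Next, show that the set $U$ of unordered pairs is also open in $B\times B$ off the diagonal. If $x_n\le_M y_n$ with $x_n\to x$, $y_n\to y$ and $x_n,y_n$ in the Birkhoff center, first flow for a short time to get $\ll_M$. Then (H2) yields $x\le_M y$, and Step~1 applied to approximating recurrent points upgrades this to strong order. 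So each off-diagonal pair is of exactly one type, both types are open, and the relation is closed under limits.

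\textbf{Step 3: connectedness.} Fix $x\in B$. Let $A=\{y\in B: y=x \text{ or } x\ll_M y\}$, noting that $x\ll_M x$ holds whenever some $y\sim x$, by transitivity. Show that $A$ is open and closed in $B$:
\begin{itemize}
\item openness comes from openness of $I^+(x)\cap I^-(x)$, together with the fact that near $x$ itself the point lies in $A$ once $x\ll_M x$;
\item closedness comes from Step~2.
\end{itemize}
Connectedness then gives either $A=B$ or $A=\{x\}$. In the first case transitivity makes $B$ strongly ordered. If some $x$ has $A=\{x\}$, then by symmetry of $\sim$ every point does, so $B$ is unordered.

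\textbf{Main obstacle.} The main obstacle is the case $A=\{x\}$ with $B$ not a singleton but possibly containing ordered pairs. The issue is ruling out a mixed situation where $x\le_M y$ holds without $x\ll_M x$. The tool to try first is Lemma~\ref{endpoint}: show that if $y\in\partial I^+(x)$ is recurrent, then the orbit is pushed strictly inside, contradicting recurrence. This is combined with the simultaneous recurrence needed in Step~1.
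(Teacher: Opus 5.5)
There is a genuine gap, and it is already in Step~1. The pointwise dichotomy you aim for is false: for recurrent $x\neq y$ you want either $x\ll_M y$ \emph{and} $y\ll_M x$, or $x,y$ unordered. Steps~2--3 rest on it.

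Here is a counterexample satisfying (H1)--(H4). Take $M=G=\mathbb{R}$ acting on itself by translations, with the constant cone $[0,\infty)$. Let $\dot x=g(x)$, where $g$ is $C^1$ and bounded, vanishes exactly on $[-1,1]$, and satisfies $g<0$ on $(1,\infty)$ and $g>0$ on $(-\infty,-1)$. In dimension one $d\varphi_t>0$, so the flow is SDP. Here $\le_M$ is the usual order, and $\mathcal{B}(\varphi)=B=[-1,1]$ is an arc of equilibria.

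\begin{itemize}
\item The recurrent points $-1,1$ satisfy $-1\ll_M 1$ but not $1\ll_M -1$, and no point satisfies $x\ll_M x$.
\item The theorem still holds, because ``strongly ordered'' in the paper means $x\ll_M y$ \emph{or} $y\ll_M x$.
\item But your relation $\sim$ is empty, so the claim in Step~2 that every off-diagonal pair is either $\sim$-related or unordered fails.
\item For $x=0$, your set $A=\{y\in B: y=x \text{ or } x\ll_M y\}=[0,1]$ is not open in $B$, and it is neither $\{x\}$ nor $B$. The clopen argument of Step~3 collapses.
\end{itemize}

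So the ``mixed situation'' you flag as the main obstacle cannot be ruled out. It is exactly the case $x\in\partial I^+(x)$, where no closed strictly conal curve passes through $x$, and it includes all classical monotone systems. Two smaller problems: two recurrent points need not be simultaneously recurrent, so the joint-recurrence step in Step~1 is unsupported. And in Step~2, flowing and then applying (H2) only gives $\varphi_t(x)\le_M\varphi_t(y)$, which cannot be pulled back to $x\le_M y$.

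The correct pairwise statement is the paper's Lemma~\ref{limit_set_dichotomy}, which is one-directional. Even that does not pass to non-recurrent points of $\mathcal{B}(\varphi)$ by taking limits: a limit of pairs with $x_n\ll_M y_n$ may land on a chronological boundary, and Lemma~\ref{endpoint} controls $\partial I^{\pm}(x)$ only for recurrent $x$.

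The paper therefore does not classify pairs of points of $B$ directly. Assuming $B$ is neither unordered nor strongly ordered, it picks a recurrent $x$ such that $B$ meets both $I^+(x)\cup I^-(x)$ and $M\setminus\overline{I^+(x)\cup I^-(x)}$. It then writes $B=B_I\cup B_K$ with $B_I=B\cap\overline{I^+(x)\cup I^-(x)}$ and $B_K=B\setminus(I^+(x)\cup I^-(x))$.

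\begin{itemize}
\item If $x\in I^+(x)\cup I^-(x)$ (the only case your two-sided picture covers), Lemma~\ref{endpoint}(ii) gives $B_I\cap B_K=\emptyset$, contradicting connectedness.
\item If $x\in\partial I^+(x)\cup\partial I^-(x)$, Lemma~\ref{endpoint}(i) only gives $B_I\cap B_K=\{x\}$. The paper shows $B_I$ is connected, then chooses a second recurrent point $x^*\notin\overline{I^+(x)\cup I^-(x)}$ near $x$ that is strongly ordered with some $c\in B_I$. Applying Lemma~\ref{endpoint} at $x^*$, $B_I$ misses $\partial I^{\pm}(x^*)$. Yet $B_I$ meets $I^+(x^*)\cup I^-(x^*)$ (at $c$) and its exterior (at $x$), so $B_I$ is disconnected.
\end{itemize}

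This second-point argument, which handles the case with no closed strictly conal curve through $x$, is what your proposal is missing.
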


This theorem, in a detailed version (see Theorem \ref{Thm_2.1}), will be proved in Section \ref{Limit dich}. It concludes that recurrence in differentially positive systems on a homogeneous space is restricted to strongly ordered or unordered components, i.e., in a connected component of Birkhoff center, either any two points are strongly ordered (w.r.t., ``$\ll_M$") or any two points are unordered (w.r.t., ``$\leq_M$").

As we mentioned that invariant measures are concentrated on the Birkhoff center,
a straightforward consequence of Theorem \ref{Thm} is the description of the order structure of the supports of the invariant measure on homogeneous space $M$. Here, the support of measure is the smallest closed set whose complement has measure zero.

\renewcommand{\thecoro}{\Alph{coro}}
\setcounter{coro}{1}

\begin{coro}
	Let $B$ be a connected component of the support of an invariant measure on $M$. Then $B$ is either strongly ordered or unordered.
\end{coro}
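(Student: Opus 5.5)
The plan is to derive the corollary from Theorem \ref{Thm} by showing that the support of any invariant measure lies inside the Birkhoff center, and that the two order properties pass to subsets.

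Let $\mu$ be an invariant (Borel probability) measure on $M$ and $S=\operatorname{supp}\mu$. First I invoke the Poincaré recurrence theorem: $\mu$-almost every point of $M$ is recurrent. Hence the set $R$ of recurrent points has full measure. Its closure, the Birkhoff center $BC$, is then a closed set whose complement has measure zero. By minimality of the support, $S\subset BC$. If $M$ is not compact, I will restrict to finite invariant measures, or use a $\sigma$-finite version of recurrence on a countable base of neighborhoods, as in the cited references \cite{J20,M12}.

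Second, let $B$ be a connected component of $S$. Since $B\subset S\subset BC$ and $B$ is connected, $B$ is contained in a single connected component $B'$ of $BC$. By Theorem \ref{Thm}, $B'$ is either strongly ordered or unordered.

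Third, both properties are hereditary. If every two distinct points of $B'$ satisfy $x\ll_M y$ or $y\ll_M x$, the same holds for points of $B$. Likewise, if no two distinct points of $B'$ are related by $\leq_M$, the same is true in $B$. So $B$ is strongly ordered or unordered, which proves the corollary.

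The only step needing any care is the inclusion $S\subset BC$. Here the main subtlety is the finiteness assumption on $\mu$ and the second countability of $M$, both of which hold for manifolds. The rest is purely set-theoretic.
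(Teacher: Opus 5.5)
Your proposal is correct and follows the paper's route. The paper simply quotes $\text{supp}(\mu)\subset\mathcal{B}(\varphi)$ from the literature and applies Theorem \ref{Thm_2.1}; you supply exactly the omitted justifications: Poincar\'e recurrence for the probability measure $\mu$, containment of a connected subset in a single component of $\mathcal{B}(\varphi)$, and heredity of both order properties.

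One small correction: drop the $\sigma$-finite fallback. For infinite invariant measures, almost-everywhere recurrence can fail; for example, $\dot x=-x$ on $\mathbb{R}$ with $\mu=dx/|x|$ has support $\mathbb{R}$ but Birkhoff center $\{0\}$. This does not affect your argument, since the paper's invariant measures are probability measures.
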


This corollary, in a more detailed version (see Corollary \ref{Thm_2.2}), establishes  that the support of an invariant measure, which captures the recurrent dynamics, adheres to an order configuration: either any two points are strongly ordered (w.r.t., ``$\ll_M$") or any two points are unordered (w.r.t., ``$\leq_M$").

It is evident that differentially positive systems provide a natural generalization of classical monotone systems, extending them from flat spaces to nonlinear manifolds. In the flat setting, Theorem \ref{Thm_2.1} and Corollary \ref{Thm_2.2} reduce to a celebrated theorem of Hirsch (see \cite[Theorem 1.6]{H99} and \cite{Mi00}), which asserts that every connected component $B$ of the Birkhoff center (or more generally, any attractor-free set $B$) is either unordered or a stationary p-arc. This theorem has been widely used to study the asymptotic behavior of stochastic approximation and the stochastic stability of differential equations (see \cite{B00,BF21,BH99,JWS26} and the references therein).

It should be noted that the Birkhoff center and the supports of invariant measures characterize the recurrence and statistical behavior, which cannot be obtained solely through the analysis of the limit sets of individual orbits. To establish the order-structural dichotomy of the Birkhoff center (the supports of invariant measures), a key innovation of this paper is to analyze the dynamics on the chronological boundary. Indeed, our approach relies heavily on this analysis, leading to a key lemma ``intersection principle" (Lemma \ref{endpoint}) about it. Meanwhile, we have to emphasize that the proof of this intersection principle is highly nontrivial and relies heavily on the homogeneous structure of the space, including homogeneous cone fields and homogeneous metrics (see Section \ref{s2}). To get the intersection principle, we study the order structure of $\omega$-limit sets (see Subsection \ref{s3.1}) and present a soft version of intersection principle (Lemma \ref{sep_n}). To reinforce the soft version to the key lemma, we establish a limit-set dichotomy for recurrent points (Lemma \ref{limit_set_dichotomy}), which helps us accomplish our approach.
	
\vskip 4mm

The paper is organized as follows. In Section \ref{s2}, we introduce the notations for homogeneous cone fields and related structures on homogeneous spaces, along with some preliminary results. Section \ref{DP} is devoted to introducing differentially positive systems and presenting the main results. To prove our main results, we study the dynamics on the chronological boundary of any recurrent point and present a fundamental lemma, the intersection principle, in Section \ref{s3}. Finally, the proof of Theorem \ref{Thm} (i.e., Theorem \ref{Thm_2.1}) is provided in Section \ref{Limit dich}.

\noindent\section{Homogeneous cone fields and chronological sets}\label{s2}

Let $G$ be a connected Lie group and $M$ be a smooth manifold. A \textit{left action} of Lie group $G$ on manifold $M$ is a smooth map $\theta \colon G \times M \to M$ satisfying $\theta(e,x)=x$ and $\theta(g_1g_2,x)=\theta(g_1,\theta(g_2,x))$ for all $g_1,g_2\in G$, $x\in M$, where $e$ is the identity element in $G$. We write $g\cdot x$ or $gx$ for $\theta(g,x)$. The action is said to be \textit{transitive} if for every pair of points $x,y\in M$, there exists $g\in G$ such that $g\cdot x=y$.

A smooth manifold endowed with a transitive smooth action by a Lie group $G$ is called a \textit{homogeneous G-space} (or a \textit{homogeneous space}).

Let $G$ be a Lie group and $M$ be a homogeneous $G$-space. If $p$ is a point of $M$, then the isotropy group $G_p=\{g\in G \colon g\cdot p=p\}$ is a closed subgroup of $G$, and the map $F \colon G/G_p \to M$ defined by $F(gG_p)=g\cdot p$ is an equivariant diffeomorphism (see \cite[Theorem 21.18]{LeeGTM218}), here, $G/G_p=\{gG_p \colon g\in G\}$ is the left coset space. Because of this equivariant diffeomorphism, we can define a homogeneous space to be a coset space of the form $G/H$, where $G$ is a Lie group and $H$ is a closed subgroup of $G$.

Let $G$ be a Lie group and fix $a\in G$. Define the \textit{left translation} map $L_a \colon G \to G$ by $L_a(g)=ag$. The left translation map is a diffeomorphism since it is smooth with smooth inverse. The inverse of $L_a$ is clearly the map $L_{a^{-1}}$, where $a^{-1}$ is the inverse of $a$. 
%The diffeomorphism $L_g$ induces a vector space isomorphism $dL_g\vert_e \colon \mathfrak{g}=T_eG \to T_gG$, where $\mathfrak{g}$ is the Lie algebra of $G$ and $e$ is the identity element in $G$.

Let $M=G/H$ be a homogeneous space, and let the natural projection $\pi \colon G\to G/H$ be defined by $\pi(g)=gH$. This projection is always a submersion. For each $a\in G$, define the \textit{left translation} $\lambda_a \colon G/H \to G/H$ by $\lambda_a(gH)=agH$. Then the left translations $\lambda_g$ are related to the left translations $L_g$ on the Lie group $G$ by $\pi \circ L_g = \lambda_g \circ \pi$ for each $g\in G$.

\vskip 4mm

A Riemannian metric on a smooth manifold $N$ is a correspondence which associates to each point $p\in N$ an inner product $(\cdot,\cdot)_p$ (that is a symmetric bilinear, positive definite form) on the tangent space $T_{p}N$. Sometimes, we write the metric as $(\cdot,\cdot)^{N}_p$ or $(\cdot,\cdot)^{N}$ to emphasize the manifold $N$.

A Riemannian metric $(\cdot,\cdot)^{G}$ on a Lie group $G$ is called \textit {left-invariant} if for each $a\in G$ the diffeomorphism $L_a$ is an isometry, that is,
$$(u,v)^{G}_x=(dL_a\vert_x(u),dL_a\vert_x(v))^{G}_{L_a(x)}$$ for all $x\in G$ and $u,v\in T_{x}G$.

Let $M=G/H$ be a homogeneous space with base-point $o=eH$. A Riemannian metric $(\cdot,\cdot)^{M}$ on $M$ is called \textit{G-invariant} if for each $a\in G$ the diffeomorphism $\lambda_a$ that sends $p\in M$ to $ap$ is an isometry, that is, $$(X,Y)^{M}_{o}=(d\lambda_a(X),d\lambda_a(Y))^{M}_{ao}$$ for all $X,Y\in T_oM$.

Throughout this paper, we assume that homogeneous space $M=G/H$ is endowed with a $G$-invariant Riemannian metric $(\cdot,\cdot)^{M}$ (which induces a Riemannian distance $d$ on $M$) and Lie group $G$ is endowed with a left-invariant Riemannian metric $(\cdot,\cdot)^{G}$ (which induces a Riemannian distance $d_G$ on $G$). Clearly, $(M,d)$ and $(G,d_G)$ are two complete metric spaces. For any $x,y\in M$, we denote by $g_{x}^{y}$ the group element that maps $x$ to $y$, i.e., $g_{x}^{y}x=y$. Then we have the following proposition:

\begin{prop}\label{distance-limit}
	Fix $x_0\in M$, for $x\in M$, there exists $g_{x_0}^{x}\in G$ such that
	\begin{enumerate}[{\rm(i)}]
		\item $d(x,x_0)\to 0$ $\Leftrightarrow$ $d_G(g_{x_0}^{x},e) \to 0$;
		\item $d(x,x_0)\to 0$ $\Leftrightarrow$ $d_G(g_{x}^{x_0},e) \to 0$.
	\end{enumerate}
   Moreover, fix $x_0,y_0\in M$, for $x\in M$, there exist $g_{x}^{y_0},g_{x_0}^{y_0}\in G$ such that
   \begin{itemize}
   	    \item[{\rm(iii)}] $d(x,x_0)\to 0$ $\Leftrightarrow$ $d_G(g_{x}^{y_0},g_{x_0}^{y_0}) \to 0$.
   \end{itemize}
\end{prop}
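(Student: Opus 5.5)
The statement is really an existence claim: for each $x$ near $x_0$ we must \emph{choose} the group elements $g_{x_0}^{x}$, $g_{x}^{x_0}$, $g_{x}^{y_0}$ well. Arbitrary choices cannot work, since $g_{x_0}^{x}$ is determined only up to the isotropy group $G_{x_0}$. The plan is to make the choice through a smooth local section of the projection $G\to G/G_{x_0}\cong M$.

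\emph{Step 1 (local section).} Identify $M$ with $G/H$, where $H=G_{x_0}$, via the equivariant diffeomorphism $F(gH)=g\cdot x_0$. The projection $\pi\colon G\to G/H$ is a surjective submersion, so it admits a smooth local section $\sigma\colon U\to G$ on a neighborhood $U$ of $x_0$ with $\sigma(x_0)=e$. Concretely, take a complement $\mathfrak m$ of $\mathfrak h$ in $\mathfrak g$ and set $\sigma(\exp(X)x_0)=\exp X$ for $X$ in a small ball of $\mathfrak m$. For $x\in U$ define $g_{x_0}^{x}:=\sigma(x)$, so that $g_{x_0}^{x}x_0=x$. Outside $U$ the choice is arbitrary, since only the limit $x\to x_0$ matters.

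\emph{Step 2 (proving (i)).} For ``$\Rightarrow$'': if $d(x,x_0)\to0$ then eventually $x\in U$, and continuity of $\sigma$ together with $\sigma(x_0)=e$ gives $d_G(\sigma(x),e)\to0$. For ``$\Leftarrow$'': the map $g\mapsto g x_0$ is continuous, so $d_G(g_{x_0}^{x},e)\to0$ implies $x=g_{x_0}^{x}x_0\to ex_0=x_0$. Note that this converse direction holds for any choice of $g_{x_0}^{x}$.

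\emph{Step 3 (proving (ii)).} Set $g_{x}^{x_0}:=(g_{x_0}^{x})^{-1}$; indeed $(g_{x_0}^{x})^{-1}x=x_0$. Inversion is a homeomorphism of $G$ fixing $e$, so $g\to e$ if and only if $g^{-1}\to e$. Alternatively, left-invariance gives $d_G(g^{-1},e)=d_G(e,g)$ after applying $L_g$. Hence (ii) follows from (i).

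\emph{Step 4 (proving (iii)).} Fix any $g_{x_0}^{y_0}$ with $g_{x_0}^{y_0}x_0=y_0$, and put $g_{x}^{y_0}:=g_{x_0}^{y_0}\,g_{x}^{x_0}$, which sends $x$ to $y_0$. Left-invariance of $d_G$ gives
$$d_G(g_{x}^{y_0},g_{x_0}^{y_0})=d_G\bigl(L_{g_{x_0}^{y_0}}g_{x}^{x_0},\,L_{g_{x_0}^{y_0}}e\bigr)=d_G(g_{x}^{x_0},e),$$
so (iii) reduces to (ii).

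The only real obstacle is Step 1, namely the existence of a continuous local section with $\sigma(x_0)=e$. This is standard: the map $\mathfrak m\ni X\mapsto \exp(X)x_0$ has differential $\mathfrak m\to T_{x_0}M$ which is an isomorphism, so the inverse function theorem applies. The reverse implications in all three items follow purely from continuity of the action.
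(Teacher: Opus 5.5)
Your proposal is correct and follows essentially the paper's route. Item (i) comes from $\pi\colon G\to G/G_{x_0}$ being a submersion, where you make the implicit local section with $\sigma(x_0)=e$ explicit, and items (ii) and (iii) come from left-invariance of $d_G$. Your choices $g_{x}^{x_0}=(g_{x_0}^{x})^{-1}$ and $g_{x}^{y_0}=g_{x_0}^{y_0}g_{x}^{x_0}$ are exactly what the paper's identities $g_{x_0}^{x}g_{x}^{x_0}=e$ and $g_{y_0}^{x}g_{x_0}^{y_0}=g_{x_0}^{x}$ tacitly require, so your version is a more careful rendering of the same argument.
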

\begin{proof}
	For (i), ``$\Leftarrow$" is clear since $\theta(e,x_0)=x_0$ and $\theta$ is continuous at $e$. ``$\Rightarrow$" is obtained by the fact that for Lie group $G$ and isotropy group $G_{x_0}=\{g\in G \colon g\cdot x_0=x_0\}$, the quotient map $\pi \colon G\to G/G_{x_0}$ defined by $\pi(g)=gG_{x_0}$ is a smooth submersion (see, e.g., \cite[Chapter 21]{LeeGTM218}). For (ii), the left-invariant Riemannian metric on $G$ implies that $d_G(g_{x}^{x_0},e)=d_G(g_{x_0}^{x}g_{x}^{x_0},g_{x_0}^{x})=d_G(e,g_{x_0}^{x})$. So, the conclusion (ii) is obtained by (i). For (iii), the left-invariant Riemannian metric on $G$ also implies that $d_G(g_{x}^{y_0},g_{x_0}^{y_0})=d_G(g^{x}_{y_0}g_{x}^{y_0},g^{x}_{y_0}g_{x_0}^{y_0})=d_G(e,g_{x_0}^{x})$. Thus, we have (iii) by (i).
\end{proof}

\subsection{Homogeneous cone fields}

Let $E$ be a $r$-dimensional real linear space. A nonempty closed subset $C$ of the linear space $E$ is called a \textit{closed convex cone} if $C+C\subset C$, $\alpha C\subset C$ for all $\alpha \geq 0$, and $C \cap (-C) = \{0\}$. A convex cone $C$ is \textit{solid} if its interior $\text{Int} C \neq \emptyset$. A convex cone $C'$ in $E$ is said to \textit{surround} a cone $C$ if $C\setminus \{ 0\}\subset \text{Int}C'$.

A \textit{cone field} on a homogeneous space $M$ is a map $x \mapsto C_{M}(x)$, such that $C_{M}(x)$ is a closed convex cone in $T_{x}M$ for each $x\in M$.
A manifold equipped with a cone field is called a \textit{conal manifold}. A cone field is \textit{solid} if each convex cone $C_{M}(x)$ is solid.
%We say that a cone field admits a \textit{smooth section} if there is a smooth vector field $X$ such that $X(x)\in C_M(x)$ for all $x\in M$.

Let $x\in M$ and $\Phi \colon U \to V(\subset \mathbb{R}^r)$ be a smooth chart, where $U\subset M$ is an open set containing $x$. For any $y\in U$, denote by $C_{M}^{\Phi}(y)\subset \mathbb{R}^r$ the representation of the cone field in the chart, i.e., $d\Phi(y)(C_{M}(y))= C_{M}^{\Phi}(y)$. Following \cite{Lawson89},
a cone field $x \mapsto C_{M}(x)$ on $M$ is called \textit{upper semicontinuous} at $x\in M$ if given a smooth chart $\Phi \colon U \to \mathbb{R}^{r}$ at $x$ and a convex cone $C'$ surrounding $C_{M}^{\Phi}(x)$, there exists a neighborhood $W$ of $x$ such that $C_{M}^{\Phi}(y)\subset C'$ for all $y\in W$. The cone field is called \textit{lower semicontinuous} at $x$ if given any open set $S$ such that $S\cap C_{M}^{\Phi}(x)$ is nonempty, there exists a neighborhood $W$ of $x$ such that $S\cap C_{M}^{\Phi}(y)$ is nonempty for all $y\in W$. A cone field is \textit{continuous at $x$} if it is both lower and upper semicontinuous at $x$ and \textit{continuous} if it is continuous at every $x$.

Let $\theta : G\times M \to M$ be a left group action on $M$ such that each of the maps $\lambda_g \colon M\to M$ defined by $\lambda_g(x)=\theta(g,x)=g\cdot x$ forms a diffeomorphism of $M$.

\begin{defn}
A cone field $C_M$ is said to be \textit{G-invariant} or \textit{homogeneous} if $$d\lambda_g \vert_x (C_M(x))=C_M(g\cdot x) \text{~for all~} g\in G \text{~and~} x\in M.$$
\end{defn}

\begin{prop}\label{homogeneousconecontinuous}
	The homogeneous cone field on a homogeneous space is continuous.
\end{prop}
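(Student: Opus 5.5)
The idea is to reduce continuity at an arbitrary point to a single fixed base point and then transport the cone by a smoothly varying family of group elements. Fix $x\in M$ and a chart $\Phi\colon U\to\mathbb{R}^r$ around $x$. Since the action is transitive and, by the argument behind Proposition \ref{distance-limit}(i), the orbit map $\pi_x\colon G\to M$, $g\mapsto g\cdot x$, is a smooth submersion, there is a smooth local section $s\colon W_0\to G$ on a neighborhood $W_0\subset U$ of $x$ with $s(x)=e$ and $s(y)\cdot x=y$. Homogeneity of $C_M$ then gives, for every $y\in W_0$,
$$C_M(y)=d\lambda_{s(y)}\vert_x\bigl(C_M(x)\bigr).$$
In the chart this reads $C^{\Phi}_M(y)=A(y)\,C^{\Phi}_M(x)$, where
$$A(y)=d\Phi(y)\circ d\lambda_{s(y)}\vert_x\circ d\Phi(x)^{-1}\in GL(r,\mathbb{R}).$$
Because $\theta$ is smooth and $s$ is smooth, $y\mapsto A(y)$ is continuous, and $A(x)=\mathrm{Id}$. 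So near $x$ the cone field in the chart is the image of one fixed closed convex cone $C:=C^{\Phi}_M(x)$ under a continuous family of invertible linear maps tending to the identity.

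For \emph{lower semicontinuity}, take an open $S$ with $v\in S\cap C$. Then $A(y)v\in C^{\Phi}_M(y)$, and $A(y)v\to v$ as $y\to x$. Hence $A(y)v\in S$ for all $y$ in a small neighborhood $W$, which is what is required.

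For \emph{upper semicontinuity}, let $C'$ be a convex cone surrounding $C$. Set $K=\{v\in C:\|v\|=1\}$, which is compact and contained in the open set $\mathrm{Int}\,C'$.
\begin{itemize}
\item By compactness there is $\varepsilon>0$ such that the $\varepsilon$-neighborhood of $K$ lies in $\mathrm{Int}\,C'$.
\item Choose $W$ so that $\|A(y)-\mathrm{Id}\|<\varepsilon$ for $y\in W$. Then $A(y)K$ lies in that $\varepsilon$-neighborhood, so $A(y)K\subset \mathrm{Int}\,C'$.
\item Since $C'$ is a cone and $C^{\Phi}_M(y)=\{0\}\cup\mathbb{R}_{>0}A(y)K$, we get $C^{\Phi}_M(y)\subset C'$ (using $0\in C'$).
\end{itemize}
If $C=\{0\}$ the upper statement is trivial.

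The only step needing care is the first: producing the continuous family $A(y)$. This requires the existence of the smooth local section $s$ of $g\mapsto g\cdot x$, which follows from the submersion property used in Proposition \ref{distance-limit}, together with the local section theorem for submersions. Everything after that is elementary compactness in $\mathbb{R}^r$.
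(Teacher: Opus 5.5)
Your proof is correct. Note that the paper gives no argument of its own for this statement: it only refers to Lawson's Proposition 4.6, which is the same result it later invokes in the proof of Proposition \ref{chronological boundary} to get a smooth vector field in the interior of the cones. So your proposal is a self-contained replacement for a citation, not a variant of an argument in the text.

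The key step is your use of a smooth local section $s$ of the orbit map $g\mapsto g\cdot x$ with $s(x)=e$. That map is a submersion because it factors as the quotient $G\to G/G_x$ followed by the equivariant diffeomorphism $G/G_x\cong M$. The section lets you write $C^{\Phi}_M(y)=A(y)\,C^{\Phi}_M(x)$ for a continuous family $A(y)$ with $A(x)=\mathrm{Id}$, i.e.\ it is a local trivialization in which the invariant cone field becomes constant. After that, lower semicontinuity is immediate, and upper semicontinuity reduces to compactness of $C\cap S^{r-1}$ inside the open set $\mathrm{Int}\,C'$. You handle both checks correctly, including $0\in C'$ and the degenerate case $C=\{0\}$.

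The citation buys brevity. Your route buys transparency and slightly more. For any $v_0\in\mathrm{Int}\,C_M(x)$, the map $y\mapsto d\lambda_{s(y)}\vert_x v_0$ is a smooth local vector field with values in $\mathrm{Int}\,C_M(y)$, since each $d\lambda_g\vert_x$ is a linear isomorphism carrying cone onto cone and hence interior onto interior. Patching such local fields with a partition of unity then gives the global smooth field $X$ with $X(p)\in\mathrm{Int}\,C_M(p)$ used in Proposition \ref{chronological boundary}, because the interior of a convex cone is convex. This covers the other consequence the paper draws from Lawson's proposition, assuming (H1) so the cones are solid.
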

\begin{proof}
	See \cite[Proposition 4.6]{Lawson89}.
\end{proof}

\begin{defn}\label{conal-curve}
A continuous and piecewise continuously differentiable curve $t \mapsto \gamma(t)$ defined on $[t_0,t_1]$ into a conal manifold $M$ (equipped with a continuous cone field $C_M$) is called a \textit{conal curve} if $\gamma'(t)\in C_{M}(\gamma(t))$ whenever $t_0\leq t<t_1$, in which the derivative is the right hand derivative at those finitely many points where the derivative is not continuous. Moreover, $\gamma$ is called a \textit{strictly conal curve} if $\gamma$ is a conal curve and $\gamma'(t)\in \text{Int} C_{M}(\gamma(t))$ for $t_0\leq t<t_1$.
\end{defn}

\begin{prop}\label{groupkeepsorder}
	Suppose that a solid cone field $C_M$ on a homogeneous G-space $M$ is homogeneous. If a curve $\gamma \colon [t_0,t_1] \subset \mathbb{R} \to M$ is a conal curve, then $\lambda_g \circ \gamma$ is a conal curve for any $g\in G$, furthermore, if $\gamma$ is a strictly conal curve, so is $\lambda_g \circ \gamma$ for any $g\in G$.
\end{prop}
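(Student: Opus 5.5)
The plan is to check the two defining properties of a (strictly) conal curve directly for $\lambda_g\circ\gamma$, using the chain rule together with the $G$-invariance of the cone field.

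First, regularity. The map $\lambda_g$ is a diffeomorphism of $M$. Hence $\lambda_g\circ\gamma\colon[t_0,t_1]\to M$ is continuous. It is also continuously differentiable on exactly the same subintervals where $\gamma$ is, so it is piecewise continuously differentiable with the same finite set of break points. At each break point, the right-hand derivative of $\lambda_g\circ\gamma$ exists. By the chain rule applied to one-sided derivatives (valid because $\lambda_g$ is $C^1$), it equals $d\lambda_g\vert_{\gamma(t)}(\gamma'(t))$, where $\gamma'(t)$ is the right-hand derivative. So for every $t\in[t_0,t_1)$ we have
\[
(\lambda_g\circ\gamma)'(t)=d\lambda_g\vert_{\gamma(t)}\bigl(\gamma'(t)\bigr).
\]

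Second, the cone condition. For $t\in[t_0,t_1)$ we have $\gamma'(t)\in C_M(\gamma(t))$. Homogeneity gives $d\lambda_g\vert_{\gamma(t)}(C_M(\gamma(t)))=C_M(g\cdot\gamma(t))=C_M((\lambda_g\circ\gamma)(t))$. Therefore $(\lambda_g\circ\gamma)'(t)\in C_M((\lambda_g\circ\gamma)(t))$, and $\lambda_g\circ\gamma$ is a conal curve. The cone field is continuous by Proposition~\ref{homogeneousconecontinuous}, so Definition~\ref{conal-curve} applies.

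Third, the strict case. The differential $d\lambda_g\vert_x\colon T_xM\to T_{gx}M$ is a linear isomorphism, hence a homeomorphism of finite-dimensional vector spaces. A homeomorphism maps the interior of a set onto the interior of its image, so
\[
d\lambda_g\vert_x(\text{Int}\,C_M(x))=\text{Int}\,\bigl(d\lambda_g\vert_x(C_M(x))\bigr)=\text{Int}\,C_M(gx).
\]
This is where solidity is used, since it guarantees the interiors are nonempty and the notion of a strictly conal curve is meaningful. If $\gamma'(t)\in\text{Int}\,C_M(\gamma(t))$, it follows that $(\lambda_g\circ\gamma)'(t)\in\text{Int}\,C_M(g\gamma(t))$, so $\lambda_g\circ\gamma$ is strictly conal.

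No step is a real obstacle. The only point needing care is the chain rule for right-hand derivatives at the break points, and that is standard for a $C^1$ map composed with a curve having a one-sided derivative.
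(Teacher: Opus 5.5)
Your proposal is correct and follows the same route as the paper. The conal case comes from the chain rule together with $G$-invariance, $d\lambda_g\vert_x(C_M(x))=C_M(g\cdot x)$, and the strict case from $d\lambda_g\vert_x$ being a linear isomorphism, so that it maps $\text{Int}\,C_M(x)$ onto $\text{Int}\,C_M(g\cdot x)$. You in fact give more detail than the paper, which for the strict case only notes that $\lambda_g$ is a diffeomorphism and does not discuss the right-hand derivatives at break points.
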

\begin{proof}
	Since $\gamma$ is a conal curve and $\frac{d}{ds}\gamma(s)\in C_{M}(\gamma(s))$, then $\frac{d}{ds}\lambda_g(\gamma(s))=d\lambda_g(\gamma(s))\frac{d}{ds}\gamma(s)\in C_{M}(\lambda_g(\gamma(s)))$ for all $t\in [t_0,t_1]$. Thus, $\lambda_g(\gamma(s))$ is a conal curve for any $g\in G$. Recall that the maps $\lambda_g$ is a diffeomorphism, then $\lambda_g \circ \gamma$ is a strictly conal curve if $\gamma$ is a strictly conal curve.
\end{proof}

\begin{defn}\label{def-order}
For two points $x,y \in M$, we write $x\leq_M y$ if there exists a conal curve $\alpha \colon [t_0,t_1] \subset \mathbb{R} \to M$ such that $\alpha(t_0)=x$ and $\alpha(t_1)=y$. We say that $x$ and $y$ are \textit{ordered}, if $x\leq_M y$ or $y\leq_M x$ holds.
This relation is an \textit{order} on $M$. In fact, it is always reflexive (i.e., $x\leq_M x$ for all $x\in M$) and transitive (i.e., $x\leq_M y$ and $y\leq_M z$ implies $x\leq_M z$). The order ``$\leq_M$" is referred to as the \textit{conal order}. We write $x<_M y$ if $x\leq_M y$ and $x\neq y$.
Furthermore, we say that $x$ and $y$ are \textit{unordered}, if neither $x\leq_M y$ nor $y\leq_M x$ holds.
We also write $x\ll_M y$ if there exists a so-called strictly conal curve $\gamma$ with $\gamma(t_0)=x$ and $\gamma(t_1)=y$. Clearly, the relation ``$\ll_M$" is always transitive. We say that $x$ and $y$ are \textit{strongly ordered}, if $x\ll_M y$ or $y\ll_M x$ holds.
A subset $U$ of $M$ is said to be \textit{ordered} (\textit{strongly ordered}, respectively), if any $x,y$ in $U$ are ordered (strongly ordered, respectively). $U$ is said to be \textit{unordered}, if any two points in $U$ are unordered.
\end{defn}

\subsection{Chronological sets}

Consider homogeneous space $M$ endowed with a homogeneous cone field $C_M$. For a given point $p\in M$, the \textit{causal future} $J^+(p)$, \textit{causal past} $J^-(p)$, \textit{chronological future} $I^+(p)$, \textit{chronological past} $I^-(p)$, \textit{chronological boundary} $\partial I^+(p)$ and $\partial I^-(p)$ of $p$ are defined as follows:
\begin{itemize}
	\item  $J^+(p)=\{q\in M \colon p\leq_M q\}$; \ \  $J^-(p)=\{q\in M \colon q\leq_M p\}$.
	
	\item  $I^+(p)=\{q\in M \colon p\ll_M q\}$; \ \  $I^-(p)=\{q\in M \colon q\ll_M p\}$.

    \item $\partial I^+(p)=\overline{I^+(p)}\backslash I^+(p)$; \ \ \ \ \ \  $\partial I^-(p)=\overline{I^-(p)}\backslash I^-(p)$.
\end{itemize}

\begin{rmk}
	Past and future sets have played an important role in singularity theory in general relativity as treated in Penrose \cite{Penrose72} or Hawking and Ellis \cite{Hawkingandellis73} via the allied concept of chronological boundaries. 
\end{rmk}

\begin{prop}\label{openness}
	For any $p\in M$, $I^{+}(p)$ and $I^-(p)$ are open sets.
\end{prop}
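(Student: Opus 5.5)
We need to show $I^+(p)$ is open; the argument for $I^-(p)$ is symmetric, obtained by reversing the parametrization of curves and using $-C_M$. The plan is to show that every $q\in I^+(p)$ has a neighborhood contained in $I^+(p)$. We use the transitivity of $\ll_M$, and we use homogeneity to produce short strictly conal curves from a fixed point $q'$ to every point near $q$.

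\textbf{Step 1: a slightly earlier point $q'$.} Take $q\in I^+(p)$, and let $\gamma:[t_0,t_1]\to M$ be a strictly conal curve with $\gamma(t_0)=p$ and $\gamma(t_1)=q$. Since $\gamma$ is piecewise $C^1$ with right derivatives, on some last interval $[s,t_1]$ it is $C^1$, and its tangent at the left endpoint lies in $\text{Int}\,C_M$. Set $q'=\gamma(s')$ for some $s'\in(s,t_1)$. Then $p\ll_M q'$ via $\gamma|_{[t_0,s']}$, and $q'\ll_M q$ via $\gamma|_{[s',t_1]}$. So it suffices to show that $q'\ll_M y$ for all $y$ in a neighborhood of $q$. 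Transitivity of $\ll_M$ then gives $p\ll_M y$.

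\textbf{Step 2: a local "cone of reachability".} We will work at the single point $q'$ and show that $I^+(q')$ contains an open set around $q$. Fix a chart $\Phi$ near the segment $\gamma([s',t_1])$; if the segment is not contained in one chart, we may take $s'$ close enough to $t_1$. By Proposition \ref{homogeneousconecontinuous} the cone field is continuous, in particular lower semicontinuous. Hence there are a vector $v$ with $v\in\text{Int}\,C^\Phi_M(z)$ for all $z$ near $\gamma([s',t_1])$, and in fact an open set $S$ of vectors lying in $\text{Int}\,C_M^\Phi(z)$ uniformly for such $z$. This uniform open set is what requires care. Lower semicontinuity only provides intersections with $C^\Phi_M(z)$, not containment in its interior. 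So I would instead use homogeneity directly: the cone field is given by $C_M(g\cdot x_0)=d\lambda_g C_M(x_0)$. Choose a local smooth section $z\mapsto g_{x_0}^z$, which exists since $\pi$ is a submersion (Proposition \ref{distance-limit}). Transporting a fixed compact set $K\subset\text{Int}\,C_M(x_0)$ by this section gives a continuously varying family of interior vectors. A compact neighbourhood of $v$ therefore stays interior near $z$.

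\textbf{Step 3: reaching nearby points by perturbing the curve.} For $y$ close to $q$, define $\tilde\gamma$ in the chart by
\[
\tilde\gamma(t)=\gamma(t)+\tfrac{t-s'}{t_1-s'}\bigl(\Phi(y)-\Phi(q)\bigr),\qquad t\in[s',t_1].
\]
Its derivative is $\gamma'(t)+(\Phi(y)-\Phi(q))/(t_1-s')$. Since $\gamma'(t)$ ranges over a compact subset of the interiors, this derivative stays in $\text{Int}\,C_M^\Phi(\tilde\gamma(t))$ once $|\Phi(y)-\Phi(q)|$ is small. This uses the uniform interior estimate from Step 2 together with the compactness of $[s',t_1]$. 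Hence $q'\ll_M y$ for all $y$ in a neighborhood of $q$, and so $I^+(p)$ is open.

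The main obstacle is the uniformity in Step 2. We need the interior of the cones to vary lower semicontinuously in the strong sense that a compact subset of $\text{Int}\,C_M(z_0)$ stays inside $\text{Int}\,C_M(z)$ for $z$ near $z_0$. I expect to obtain this from the homogeneous structure via a continuous local section of $G\to M$, rather than from the abstract continuity notion alone.
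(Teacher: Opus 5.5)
The paper gives no argument of its own here; it simply cites \cite[Proposition 2.8]{NW25}. A self-contained proof is therefore welcome, but your Step 3 fails at the terminal point $t_1$.

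Definition \ref{conal-curve} only requires $\gamma'(t)\in\text{Int}\,C_M(\gamma(t))$ for $t_0\le t<t_1$. The left derivative of $\gamma$ at $t_1$ is merely in $C_M(q)$ and may lie on its boundary, or vanish. So $\{\gamma'(t): t\in[s',t_1]\}$ is \emph{not} a compact subset of the interiors, and the interior margin of $\gamma'(t)$ can tend to $0$ as $t\to t_1$. Meanwhile your $\tilde\gamma$ adds the fixed vector $(\Phi(y)-\Phi(q))/(t_1-s')$ to the tangent all the way up to $t_1$.

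Concretely, take $M=G=\mathbb{R}^2$ acting by translations, with the constant cone $C=\{(a,b): b\ge |a|\}$, and let $\gamma(t)=(t^2/2,t)$ on $[0,1]$. This curve is strictly conal, but $\gamma'(1)=(1,1)\in\partial C$. For $y=q+(\epsilon,0)$ your $\tilde\gamma$ has tangent $(t+\epsilon/(1-s'),1)\notin C$ for $t$ close to $1$, however small $\epsilon>0$ is. Keeping the deformation away from $t_1$ and shifting the tail by a chart translation does not rescue this either, because chart translations do not preserve the cone field.

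The missing idea is to keep the deformation away from the degenerate endpoint and let homogeneity transport the rest. One clean route uses the fact that the definition is asymmetric in your favour at the \emph{initial} point, where the right derivative must be interior.

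\emph{Step (a).} Apply your chart perturbation on $[t_0,s'']$, with $s''$ close to $t_0$ and before the first break point, and keep $\gamma|_{[s'',t_1]}$ unchanged. On $[t_0,s'']$ the tangent has a uniform interior margin, by your Step 2 plus compactness. This correctly proves that $I^-(q)$ is open for every $q$.

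\emph{Step (b).} Now let $p\ll_M q$. Choose a neighbourhood $V$ of $e$ in $G$ such that $h^{-1}p\in I^-(q)$ for all $h\in V$. Proposition \ref{groupkeepsorder} gives $p\ll_M hq$. Since $h\mapsto hq$ is a submersion, $Vq$ is a neighbourhood of $q$, so $I^+(p)$ is open.

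Equivalently, set $\tilde\gamma(t)=g(t)\cdot\gamma(t)$, where $g$ moves from $e$ to $h$ only on a compact subinterval of $[s',t_1)$ and $g\equiv h$ afterwards, so the tail is moved rigidly by $\lambda_h$.

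Relatedly, your remark that $I^-$ follows ``by symmetry'' is not literal. Reversing a strictly conal curve turns its left derivatives at break points and at $t_1$ into right derivatives, and these may be degenerate. Under this one-sided definition the two cases therefore need separate treatment, or the homogeneity transfer above.
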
	

\begin{proof}
	It follows from \cite[Proposition 2.8]{NW25}.
\end{proof}

\begin{prop}\label{homogeneous-prop}
	Suppose that a solid cone field $C_M$ on a homogeneous G-space $M$ is homogeneous, then for any $x,y\in M$ and $g_{x}^{y}\in G$, the following hold: $J^{+}(y)=g_{x}^{y}J^{+}(x)$, $J^{-}(y)=g_{x}^{y}J^{-}(x)$, $I^{+}(y)=g_{x}^{y}I^{+}(x)$, and $I^{-}(y)=g_{x}^{y}I^{-}(x)$.
\end{prop}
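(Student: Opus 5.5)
The plan is to prove the four identities by checking both inclusions directly. The tool is Proposition \ref{groupkeepsorder}: left translations $\lambda_g$ send conal curves to conal curves and strictly conal curves to strictly conal curves. Write $g=g_x^y$, so $gx=y$ and $\lambda_g$ is a diffeomorphism with inverse $\lambda_{g^{-1}}$, where $g^{-1}y=x$.

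For $J^+(y)=gJ^+(x)$, first take $q\in J^+(x)$. There is a conal curve $\alpha\colon[t_0,t_1]\to M$ with $\alpha(t_0)=x$ and $\alpha(t_1)=q$. By Proposition \ref{groupkeepsorder}, $\lambda_g\circ\alpha$ is a conal curve from $gx=y$ to $gq$. Hence $gq\in J^+(y)$, which gives $gJ^+(x)\subset J^+(y)$.

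For the reverse inclusion, take $q'\in J^+(y)$ with a conal curve $\beta$ from $y$ to $q'$. Apply the same proposition with $g^{-1}$ in place of $g$. Then $\lambda_{g^{-1}}\circ\beta$ is a conal curve from $g^{-1}y=x$ to $g^{-1}q'$. So $g^{-1}q'\in J^+(x)$, that is, $q'\in gJ^+(x)$.

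The identity for $J^-$ is the same argument with the endpoints reversed: the curve runs from $q$ to $x$ and is translated to a curve from $gq$ to $y$. For $I^{\pm}$ we repeat everything with strictly conal curves, using the second assertion of Proposition \ref{groupkeepsorder}.

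The only point needing care is that Proposition \ref{groupkeepsorder} applies to every group element, including $g^{-1}$. This holds because it is stated for arbitrary $g\in G$. We also use that the translated curve is again continuous and piecewise $C^1$, with right-hand derivatives at the break points. This follows because $\lambda_g$ is a diffeomorphism, so the chain rule applies to right derivatives as well. No real obstacle is expected; the statement is a bookkeeping consequence of the homogeneity of the cone field.
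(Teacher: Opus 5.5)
Your proposal is correct and takes the same approach as the paper. The paper's proof is the one-line citation of Proposition \ref{groupkeepsorder}; you have written out the two inclusions that this citation leaves implicit, using $g_x^y$ for one and its inverse for the other.
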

\begin{proof}
	The conclusion follows from Proposition \ref{groupkeepsorder}.
\end{proof}

\begin{defn}\label{continuity-for-chronological sets}
$I^+$ is said to be \textit{inner continuous} at $p\in M$ if each compact set $K\subset I^+(p)$, there exists a neighborhood $U(p)$ of $p$ such that $K\subset I^+(q)$ for each $q\in U(p)$.
$I^+$ is said to be \textit{outer continuous} at $p\in M$ if each compact set $K\subset M\backslash \overline{I^+(p)}$, there exists a neighborhood $U(p)$ of $p$ such that $K\subset M\backslash \overline{I^+(q)}$ for each $q\in U(p)$. $I^+$ is said to be \textit{continuous} at $p$ if it is both inner and outer continuous at $p$ and \textit{continuous} if it is continuous at every $p\in M$.
The relevant definitions of continuity for $I^-$ are similar.
\end{defn}

\begin{rmk}
	The definition of continuity of the chronological sets originates from Hawking and Sachs \cite[Section 1.6]{Hawkingandsachs74}, where they provided a crucial link between the continuity of the chronological sets and that of the time functions.
\end{rmk}

\begin{prop}\label{innercontinuous}
	For any $p\in M$, $I^{+}(p)$ and $I^-(p)$ are inner continuous.
\end{prop}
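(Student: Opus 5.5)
The plan is a direct compactness argument. It uses only the openness of chronological sets (Proposition \ref{openness}) and the transitivity of ``$\ll_M$'' (Definition \ref{def-order}); homogeneity enters only through Proposition \ref{openness}. I treat $I^+$; the case of $I^-$ is symmetric, obtained by reversing the roles of past and future.

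\emph{Step 1: interpolation.} Fix a compact set $K\subset I^+(p)$ and a point $k\in K$. Since $p\ll_M k$, there is a strictly conal curve $\gamma\colon[t_0,t_1]\to M$ with $\gamma(t_0)=p$ and $\gamma(t_1)=k$, where $t_0<t_1$. Choose any $s\in(t_0,t_1)$ and set $k'=\gamma(s)$.
The restrictions $\gamma|_{[t_0,s]}$ and $\gamma|_{[s,t_1]}$ are again strictly conal curves, because the tangent condition on $[t_0,s)$ and on $[s,t_1)$ is inherited from $\gamma$. Hence $p\ll_M k'$ and $k'\ll_M k$. This works even if $p=k$, which is allowed since closed strictly conal curves are not excluded. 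Thus every $k\in K$ lies in the set $I^+(k')$, and $p$ lies in $I^-(k')$.

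\emph{Step 2: finite subcover.} By Proposition \ref{openness}, each $I^+(k')$ is open. So the sets $\{I^+(k'_k)\}_{k\in K}$ form an open cover of $K$. By compactness we extract points $k'_1,\dots,k'_n$, each satisfying $p\ll_M k'_i$, such that
\[K\subset I^+(k'_1)\cup\cdots\cup I^+(k'_n).\]
Now set
\[U(p)=I^-(k'_1)\cap\cdots\cap I^-(k'_n).\]
This is a finite intersection of open sets, again by Proposition \ref{openness}, and it contains $p$. So $U(p)$ is an open neighborhood of $p$.

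\emph{Step 3: conclusion.} Take $q\in U(p)$ and $k\in K$. Pick $i$ with $k\in I^+(k'_i)$. Then $q\ll_M k'_i$ and $k'_i\ll_M k$, so transitivity of ``$\ll_M$'' gives $q\ll_M k$. Hence $K\subset I^+(q)$ for every $q\in U(p)$, which is inner continuity at $p$.

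The only delicate point is the interpolation in Step 1. It must produce an intermediate point $k'$ that is strictly related on both sides, and this rests on the observation that restrictions of strictly conal curves remain strictly conal. Once that is in place, the rest is a routine open-cover argument.
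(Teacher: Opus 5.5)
Your proof is correct and is essentially the paper's own argument. The paper also picks an interior point $w=\gamma(1/2)$ of a strictly conal curve, covers $K$ by the open sets $I^-(w)$, takes $\bigcap_i I^+(w_i)$ as the neighborhood of $p$, and concludes by transitivity of $\ll_M$; the only difference is that the paper writes out the case of $I^-(p)$ and calls $I^+(p)$ analogous, while you do the mirror image.
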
	

\begin{proof}
	Suppose $K\subset I^{-}(p)$ is compact. For any $x\in I^-(p)$, i.e., $x\ll_M p$, there is a strictly conal curve $\gamma$ such that $\gamma (0)=x$ and $\gamma(1)=p$. Let $w=\gamma(\frac{1}{2})$, then $x\ll_M w\ll_M p$, i.e., $w\in I^-(p)$ and $x\in I^-(w)$. Since $I^-(w)$ is open, then $I^-(w)$ is an open neighborhood of $x$. Thus, $\{I^-(w): w\in I^-(p)\}$ is an open covering of $K$. Since $K$ is a compact set, we choose $w_1, w_2, \cdots, w_n$ to determine a finite subcovering. On the other hand, $w_i\in I^-(p)$ implies that $p\in I^+(w_i)$, $i=1,2,\cdots, n$. So, $U=\bigcap^{n}_{i=1}I^+(w_i)$ is an open neighborhood of $p$. For any $u\in U$, $u\in I^+(w_i)$, $i=1,2,\cdots, n$. Then $w_i\in I^-(u)$. For any $y\in I^-(w_i)$, since $y\ll_M w_i$ and $w_i\ll_M u$, then $y\ll_M u$, i.e., $y\in I^-(u)$. So, $I^-(w_i)\subset I^-(u)$ for $i=1,2,\cdots,n$. Thus, $K\subset \bigcup ^{n}_{i=1}I^-(w_i)\subset I^-(u)$. So, we have proved that $I^{-}(p)$ is inner continuous.
	
	A similar argument is used for $I^+(p)$.
\end{proof}

\begin{defn}\label{quasi-closed}
	The order ``$\leq_M$" is said to be \textit{quasi-closed} if $x\leq_M y$ whenever $x_n\to x$ and $y_n\to y$ as $n\to \infty$ and $x_n\ll_M y_n$ for all $n$.
\end{defn}

\begin{prop}\label{quasi-closed-prop}
	If the conal order ``$\leq_M$" is quasi-closed, then $\partial I^{+}(p)\subset J^{+}(p)\backslash I^{+}(p)$ and $\partial I^{-}(p)\subset J^{-}(p)\backslash I^{-}(p)$.
\end{prop}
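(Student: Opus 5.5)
The plan is to treat $\partial I^{+}(p)$ directly; the statement for $\partial I^{-}(p)$ then follows by the symmetric argument with the roles of the endpoints interchanged. By definition $\partial I^{+}(p)=\overline{I^{+}(p)}\setminus I^{+}(p)$, so every point of $\partial I^{+}(p)$ already lies outside $I^{+}(p)$. Hence it suffices to prove $\overline{I^{+}(p)}\subset J^{+}(p)$, i.e., that every point in the closure of the chronological future is causally related to $p$.

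To do this, fix $q\in\overline{I^{+}(p)}$ and choose a sequence $q_n\in I^{+}(p)$ with $q_n\to q$. This is possible because $M$ is a metric space with the Riemannian distance $d$. Set $x_n:=p$ for all $n$, so that $x_n\to p$ trivially. Since $q_n\in I^{+}(p)$, we have $x_n=p\ll_M q_n$ for every $n$. Quasi-closedness (Definition \ref{quasi-closed}) applied to the pairs $(x_n,q_n)\to(p,q)$ then gives $p\leq_M q$, i.e., $q\in J^{+}(p)$. Therefore $\partial I^{+}(p)\subset \overline{I^{+}(p)}\setminus I^{+}(p)\subset J^{+}(p)\setminus I^{+}(p)$.

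For the past set, take $q\in\overline{I^{-}(p)}$ and $q_n\in I^{-}(p)$ with $q_n\to q$. Then $q_n\ll_M p$, and quasi-closedness applied to $(q_n,p)\to(q,p)$ yields $q\leq_M p$, i.e., $q\in J^{-}(p)$. Removing $I^{-}(p)$ gives the second inclusion.

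No step here is a real obstacle: the argument consists only of unwinding the definitions of closure and quasi-closedness. The only point needing care is to check that quasi-closedness is applied with the strong relation $\ll_M$ in the hypothesis (which is exactly what membership in $I^{\pm}(p)$ provides) and with the weak relation $\leq_M$ in the conclusion.
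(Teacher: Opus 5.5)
Your proof is correct and follows the paper's argument: approximate $q\in\partial I^{+}(p)$ by points of $I^{+}(p)$ and apply quasi-closedness with the constant sequence $p$ to get $p\leq_M q$, and argue symmetrically for $I^{-}(p)$. The only difference is that the paper cites Penrose's Proposition 3.4 to exclude $I^{+}(p)$, whereas you correctly note that this exclusion is immediate from the paper's definition $\partial I^{+}(p)=\overline{I^{+}(p)}\setminus I^{+}(p)$.
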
	

\begin{proof}
	If $q\in \partial I^{+}(p)$, then there is a sequence $\{p_n\}\subset I^{+}(p)$ such that $p_n \to q$. The quasi-closedness of the relation ``$\leq_M$" implies that $p\leq_M q$. Thus, we have $q\in J^{+}(p)$ and hence $\partial I^{+}(p)\subset J^{+}(p)$. Moreover, \cite[Proposition 3.4]{Penrose72} implies that $\partial I^{+}(p)\subset J^{+}(p)\backslash I^{+}(p)$. The case for $\partial I^{-}(p)$ can be proved analogously.
\end{proof}

	\begin{prop}\label{chronological boundary}
		Suppose that a solid cone field $C_M$ on a homogeneous G-space $M$ is homogeneous. Then, for any $p\in M$, we have $p\in \overline{I^{+}(p)}\cap \overline{I^{-}(p)}$. Moreover,
		\begin{enumerate}[{\rm (i)}]
			\item if $p\in I^{+}(p)\cup I^{-}(p)$, then $p\in I^{+}(p)\cap I^{-}(p)$;
			
			\item if $p\in \partial I^{+}(p)\cup \partial I^{-}(p)$, then $p\in \partial I^{+}(p)\cap \partial I^{-}(p)$.
		\end{enumerate}
	\end{prop}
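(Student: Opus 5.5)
The plan is to first produce, through $p$, a strictly conal curve generated by a one-parameter subgroup of $G$. This places $p$ in $\overline{I^{+}(p)}\cap\overline{I^{-}(p)}$. Statements (i) and (ii) will then follow from the definitions alone.

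\emph{Step 1 (a strictly conal orbit curve through $p$).} Since $C_M$ is solid, pick $v\in \text{Int}\,C_M(p)$. The orbit map $G\to M$, $g\mapsto g\cdot p$, is a submersion (it is the quotient map $G\to G/G_p$ composed with the equivariant diffeomorphism $F$). Hence there is $X$ in the Lie algebra of $G$ with $\frac{d}{dt}\big|_{t=0}\exp(tX)\cdot p=v$. Put $\gamma(t)=\exp(tX)\cdot p$ for $t\in\mathbb{R}$. Since $\exp((t+s)X)=\exp(tX)\exp(sX)$, differentiating in $s$ at $s=0$ gives
\[
\gamma'(t)=d\lambda_{\exp(tX)}\vert_p(v).
\]
Homogeneity of the cone field gives $d\lambda_{\exp(tX)}\vert_p(C_M(p))=C_M(\gamma(t))$. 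Because $d\lambda_{\exp(tX)}\vert_p$ is a linear isomorphism, it maps $\text{Int}\,C_M(p)$ onto $\text{Int}\,C_M(\gamma(t))$. Therefore $\gamma'(t)\in \text{Int}\,C_M(\gamma(t))$ for all $t$, and $\gamma$ restricted to any compact interval is a strictly conal curve in the sense of Definition~\ref{conal-curve}.

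\emph{Step 2 (closures).} For $t>0$, the restriction $\gamma\vert_{[0,t]}$ joins $p$ to $\gamma(t)$, so $p\ll_M\gamma(t)$, i.e.\ $\gamma(t)\in I^{+}(p)$. Likewise $\gamma\vert_{[-t,0]}$ joins $\gamma(-t)$ to $p$, so $\gamma(-t)\in I^{-}(p)$. Letting $t\to 0^{+}$, continuity of $\gamma$ gives $\gamma(\pm t)\to p$. Hence $p\in\overline{I^{+}(p)}\cap\overline{I^{-}(p)}$.

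\emph{Step 3 (items (i) and (ii)).} By definition, $p\in I^{+}(p)$ means $p\ll_M p$, and this is exactly the condition $p\in I^{-}(p)$. So $p\in I^{+}(p)\iff p\in I^{-}(p)$, which gives (i). For (ii), suppose $p\in\partial I^{+}(p)$; then $p\notin I^{+}(p)$, so $p\notin I^{-}(p)$ by the equivalence just noted. Together with $p\in\overline{I^{-}(p)}$ from Step 2, this yields $p\in\partial I^{-}(p)$. The case $p\in\partial I^{-}(p)$ is symmetric.

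The only step needing care is Step 1. There one must check that the submersion property supplies the vector $X$ with $X\cdot p=v$, and that homogeneity transports interior points of the cone to interior points along the whole orbit curve. The rest is immediate from the definitions.
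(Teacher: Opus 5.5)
Your proof is correct. Steps 2 and 3 match the paper's argument: you restrict a strictly conal curve through $p$ to short intervals on either side of $p$, and you observe that $p\in I^{+}(p)$ and $p\in I^{-}(p)$ both just say $p\ll_M p$.

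The difference lies in Step 1, where you obtain the strictly conal curve through $p$. The paper does not construct it. It cites \cite[Proposition 4.6]{Lawson89} for a smooth vector field $X$ on $M$ with $X(q)\in \text{Int}\, C_M(q)$ for every $q$, and takes the integral curve of $X$ through $p$ on some interval $(-\epsilon,\epsilon)$. It uses this only to know that $I^{\pm}(p)\neq\emptyset$. It then takes an arbitrary strictly conal curve from $p$ to a point of $I^{+}(p)$, and one from a point of $I^{-}(p)$ to $p$, and uses their initial and final segments. You instead take the one-parameter subgroup orbit $\gamma(t)=\exp(tX)\cdot p$ with initial velocity an interior vector $v$. $G$-invariance then carries $v$ to $\gamma'(t)=d\lambda_{\exp(tX)}\vert_p(v)\in \text{Int}\, C_M(\gamma(t))$ for every $t$, and you use this single curve in both time directions.

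Your route is self-contained: it uses only the submersion property of the orbit map and the invariance of $C_M$. It needs no continuity of the cone field, no global interior-valued section, and no external citation, and it gives a strictly conal curve through $p$ defined for all $t\in\mathbb{R}$. The paper's route depends less on the group structure. The closure statement only needs some strictly conal curve through $p$, which any solid cone field with an interior-valued smooth section provides. Homogeneity therefore enters there only through the cited result.
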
	
	\begin{proof}
		Since cone field $C_M$ is solid, \cite[Proposition 4.6]{Lawson89} implies that there exists a smooth vector field $X$ such that $X(p)\in \text{Int} C_M(p)$ for any $p\in M$. Therefore, for each point $p\in M$, there exist $\epsilon>0$ and a smooth curve $\gamma \colon (-\epsilon,\epsilon)\to M$ that is an integral curve of $X$ starting at $p$ (i.e., $\gamma(0)=p$). Consequently, $\gamma$ is a strictly conal curve. This fact enables us to assume that $I^{+}(p)\neq \emptyset$ and $I^{-}(p)\neq \emptyset$ for any $p\in M$.
		If $q_1\in I^{+}(p)$, then there exists a strictly conal curve $\gamma_1 \colon [0,1] \subset \mathbb{R} \to M$ such that $\gamma_1(0)=p$ and $\gamma_1(1)=q_1$. Clearly, $\gamma_1(s)\in I^{+}(p)$ for all $s\in (0,1)$ and $\lim_{s\to 0}\gamma_1(s)=\gamma_1(0)=p$. So, $p\in \overline{I^{+}(p)}$. Similarly, if $q_2\in I^{-}(p)$, then there exists a strictly conal curve $\gamma_2 \colon [0,1] \subset \mathbb{R} \to M$ such that $\gamma_2(0)=q_2$ and $\gamma_2(1)=p$. Furthermore, $\gamma_2(s)\in I^{-}(p)$ for all $s\in (0,1)$ and $\lim_{s\to 1}\gamma_2(s)=\gamma_2(1)=p$. Therefore, $p\in \overline{I^{-}(p)}$. Hence, $p\in \overline{I^{+}(p)}\cap \overline{I^{-}(p)}$.
		
		For (i). If $p\in I^{+}(p)$ (or $p\in I^{-}(p)$), then $p\ll_M p$. And hence, $p\in I^{-}(p)$ (resp., $p\in I^{+}(p)$). Thus, $p\in I^{+}(p)\cap I^{-}(p)$.
		
		For (ii). If $p\in \partial I^{+}(p)$, then $p\notin I^{+}(p)$. So, $p\notin I^{-}(p)$ by (i). Furthermore, the fact that $p\in \overline{I^{+}(p)}\cap \overline{I^{-}(p)}$ entails that $p\in \partial I^{-}(p)$. By a similar argument, one can prove that if $p\in \partial I^{-}(p)$, then $p\in \partial I^{+}(p)$. Thus, we obtain the conclusion.
	\end{proof}

\section{Differentially positive systems and main results}\label{DP}

Following the geometric framework established in Section \ref{s2}, we introduce the differentially positive systems on homogeneous spaces and present the main results of the paper.

\subsection{Differentially positive systems}\label{SDP}

Now, we consider a system $\Sigma$ generated by a continuously differentiable vector field $f$ on homogeneous space $M$ equipped with a homogeneous cone field $C_M$. The induced flow by system $\Sigma$ is denoted by $\varphi$, which is a map $\varphi \colon \mathbb{R} \times M \to M$, given by $(t, x) \mapsto \varphi_t(x)$. For brevity, we also denote the flow by $\varphi_t$. We write $d\varphi_t(x)$ as the tangent map from $T_{x}M$ to $T_{\varphi_{t}(x)}M$. Let $x\in M$, the \textit{positive semiorbit} (resp., \textit{negative semiorbit}) of $x$, be denoted by $O^+(x)=\{\varphi_t(x) \colon t\geq 0\}$ (resp.,  $O^-(x)=\{\varphi_t(x)\colon t\leq 0\}$). The \textit{orbit} of $x$ is denoted by $O(x)=O^+(x)\cup O^-(x)$. The $\omega$-limit set $\omega(x)$ of $x$ is defined by $\omega(x)=\cap_{s\geq 0}\overline{\cup_{t\geq s}\varphi_t(x)}$. A point $z\in \omega(x)$ if and only if there exists a sequence $\{t_i\}$, $t_i\to \infty$, such that $\varphi_{t_i}(x)\to z$ as $i\to \infty$. If $O^+(x)$ is precompact (i.e., the closure of $O^{+}(x)$ is compact), then $\omega(x)$ is nonempty, compact, connected, and invariant (i.e., $\varphi_t(\omega(x))=\omega(x)$ for any $t\in \mathbb{R}$).

A point $x\in M$ is called a \textit{recurrent point} if $x\in \omega(x)$. Denote by $\mathcal{R}(\varphi)$ all recurrent points of $\varphi_t$ in $M$. The closure of $\mathcal{R}(\varphi)$ in $M$ is called \textit{Birkhoff center}, labeled by $\mathcal{B}(\varphi)$, i.e.,
\begin{equation*}
	\mathcal{B}(\varphi)=\overline{\{x\in M \colon x\in \omega(x)\}}.
\end{equation*}

\begin{defn}\label{differential positive}
	The system $\Sigma$ is said to be \textit{strongly differentially positive} (SDP) with respect to $C_{M}$ if
	\begin{equation*}
		d\varphi_t(x)\{ C_{M}(x)\backslash \{ 0\} \} \subseteq \text{Int} C_{M}(\varphi_t(x)), \ \ \forall x\in M, \ \ \forall t>0.
	\end{equation*}
\end{defn}

\vskip 3mm

The following two propositions indicate that the strongly differentially positive system $\Sigma$ naturally keeps the conal order.

	\begin{prop}\label{monotone property}
		Assume that system $\Sigma$ is {\rm SDP}. 
		\begin{enumerate}[{\rm (i)}]
			\item	If $x<_M y$, then $\varphi_t(x)\ll_M \varphi_t(y)$ for all $t>0$.
			\item   If $x\leq_M x$ with $x\in \gamma$, then $\varphi_t(x)\ll_M \varphi_t(x)$ for all $t>0$, where $\gamma$ is a nontrivial closed conal curves.
		\end{enumerate}
	\end{prop}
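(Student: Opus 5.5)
The proof will push a conal curve forward by the flow and check that the pushed curve is strictly conal. For (i), $x<_M y$ gives a conal curve $\gamma\colon[t_0,t_1]\to M$ with $\gamma(t_0)=x$ and $\gamma(t_1)=y$. Fix $t>0$ and set $\tilde\gamma=\varphi_t\circ\gamma$. Since $\varphi_t$ is a $C^1$ diffeomorphism, $\tilde\gamma$ is continuous and piecewise $C^1$, with the same finitely many break points. By the chain rule, its right-hand derivative is
\begin{equation*}
\tilde\gamma'(s)=d\varphi_t(\gamma(s))\,\gamma'(s).
\end{equation*}
Where $\gamma'(s)\neq 0$, strong differential positivity (Definition \ref{differential positive}) gives $\tilde\gamma'(s)\in \text{Int}\,C_M(\varphi_t(\gamma(s)))$. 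So the only obstruction to strict conality is the set of parameters where $\gamma'(s)=0$.

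To remove that obstruction, I will first reparametrize $\gamma$ so that its derivative never vanishes.

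1. Since $x\neq y$, $\gamma$ is not constant. On each of the finitely many $C^1$ pieces, I will cut out the subintervals on which $\gamma$ is constant.
2. On the remaining parts, where $\gamma'$ can still vanish at isolated points or on Cantor-like sets, I will use a perturbation argument instead of reparametrizing directly. By Proposition \ref{chronological boundary} there is a smooth vector field $X$ with $X(z)\in\text{Int}\,C_M(z)$ for every $z$.
3. The cleaner route is to note that it suffices to show $\varphi_t(x)\ll_M\varphi_t(y)$ for each $t>0$, and to split $t=t/2+t/2$. One then shows that the image under $\varphi_{t/2}$ of a conal curve is a conal curve whose derivative lies in the open cone wherever it is nonzero. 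The points where the derivative is zero form a closed set $Z$ in parameter space, and the curve is constant on each component of $Z$ that is an interval.

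The main obstacle is precisely these zero-derivative points. A conal curve with $\gamma'=0$ somewhere is still a conal curve, but it is not strictly conal there.

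My first attempt at this obstacle is to reparametrize by arclength. The function $s\mapsto \int_{t_0}^s|\gamma'|$ is nondecreasing, and collapsing its flat pieces yields a Lipschitz curve with unit speed almost everywhere. The difficulty is that this curve need not be piecewise $C^1$. Then:

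1. I will approximate the reparametrized curve, piece by piece, by piecewise $C^1$ curves whose derivatives are small perturbations of $\gamma'$ normalized to unit length.
2. After applying $d\varphi_t$, these derivatives lie in the open cone. This uses compactness of the curve, continuity of the cone field (Proposition \ref{homogeneousconecontinuous}), and the fact that the image under $d\varphi_t$ of the compact set of unit cone vectors lies in the open cones with a uniform margin.
3. The endpoints are then corrected using inner continuity (Proposition \ref{innercontinuous}) and openness of $I^\pm$ (Proposition \ref{openness}).

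For (ii), the same argument applies to the nontrivial closed conal curve $\gamma$ through $x$. Reparametrize $\gamma$ to start and end at $x$; it is nonconstant because it is nontrivial. The flowed curve $\varphi_t\circ\gamma$ is then strictly conal from $\varphi_t(x)$ to itself, so $\varphi_t(x)\ll_M\varphi_t(x)$.
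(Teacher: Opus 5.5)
Your basic mechanism is the paper's. You push a conal curve $\gamma$ from $x$ to $y$ forward by $\varphi_t$ and use strong differential positivity to place $d\varphi_t(\gamma(s))\gamma'(s)$ in $\text{Int}\,C_M(\varphi_t(\gamma(s)))$. The paper's proof simply writes $\gamma'(s)\in C_M(\gamma(s))\setminus\{0\}$, so you are right that it passes over vanishing derivatives. However, your repair of that point does not work as written, so the proposal has a gap exactly where you put the main obstacle.

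\textbf{The failing step: endpoint correction.} Openness of $I^{\pm}$ (Proposition \ref{openness}) and inner continuity (Proposition \ref{innercontinuous}) only say that a relation $a\ll_M b$ survives when $a$ or $b$ is moved slightly. They give no way to pass from $\varphi_t(x')\ll_M\varphi_t(y')$ with $x'\to x$, $y'\to y$ to $\varphi_t(x)\ll_M\varphi_t(y)$. That would be a closedness property of $\ll_M$, which fails in general: lightlike-separated points in Minkowski space are limits of chronologically related pairs. In (ii), where both endpoints equal $x$, such a correction is even less available.

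\textbf{The other ideas do not help.} The field $X$ is never actually used, and pushing along it moves an endpoint to the future or past, which is the wrong direction. Splitting $t=t/2+t/2$ gains nothing: $d\varphi_{t/2}$ is injective, so $\varphi_{t/2}\circ\gamma$ has exactly the same zeros of its derivative as $\gamma$.

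\textbf{How to fix it.} What you need is an approximant with the exact endpoints; no arclength reparametrization is required.
\begin{enumerate}
\item Put $\tilde\gamma=\varphi_t\circ\gamma$ and cut $[t_0,t_1]$ into short windows $[a_i,a_{i+1}]$, each mapped into a chart.
\item In chart coordinates, $\tilde\gamma(a_{i+1})-\tilde\gamma(a_i)=\int_{a_i}^{a_{i+1}}|\tilde\gamma'(s)|\,u(s)\,ds$, where $u=\tilde\gamma'/|\tilde\gamma'|$ wherever $\tilde\gamma'\neq 0$. If $\tilde\gamma$ is not constant on the window, the total weight is positive.
\item By your uniform-margin observation, the directions $u(s)$ lie in a compact subset of the open cones. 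By continuity of the cone field (Proposition \ref{homogeneousconecontinuous}) and convexity of the cones, for short enough windows this chord vector lies in $\text{Int}\,C_M$ at every point of the chart segment from $\tilde\gamma(a_i)$ to $\tilde\gamma(a_{i+1})$.
\item Replace $\tilde\gamma$ by these segments and discard the windows on which $\tilde\gamma$ is constant.
\end{enumerate}
The result is a strictly conal curve from exactly $\varphi_t(x)$ to exactly $\varphi_t(y)$. Applied to a nonconstant closed conal curve through $x$, the same construction gives (ii) verbatim, with no endpoint correction needed.
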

	\begin{proof}
		If $x<_M y$, there exists a conal curve $\gamma(s)$ such that $\gamma(0)=x$ and $\gamma(1)=y$. Since $\Sigma$ is SDP and $\frac{d}{ds}\gamma(s)\in C_{M}(\gamma(s))\backslash \{ 0\}$, then $\frac{d}{ds}\varphi_t(\gamma(s))=d\varphi_t(\gamma(s))\frac{d}{ds}\gamma(s)\in \text{Int} C_{M}(\varphi_t(\gamma(s)))$ for $t>0$. So, $\varphi_t(\gamma(s))$ is a strictly conal curve and $\varphi_t(x)\ll_M \varphi_t(y)$ for all $t>0$.
		
		If $x$ belongs to a nontrivial closed conal curve $\gamma$ such that $\gamma(0)=\gamma(1)=x$, then $\varphi_t(\gamma)$ is a closed strictly conal curve and $\varphi_t(x)\in \varphi_t(\gamma)$ for all $t>0$.
	\end{proof}
	
	\begin{rmk}\label{closed-conal-curve}
		As a matter of fact, if $M$ contains nontrivial closed conal curves, then case (ii) of Proposition \ref{monotone property} will occur, which implies that $\varphi_t(x)\in I^{+}(\varphi_t(x))$ for all $t>0$ whenever $x$ belongs to a nontrivial closed conal curve. This phenomenon cannot occur in classical strongly monotone systems.
	\end{rmk}

\begin{prop}\label{strong flow open relation}
	Assume that system $\Sigma$ is {\rm SDP}. If $x<_M y$, then for $t_0>0$, there exist neighborhoods $U$ of $x$, $V$ of $y$ such that $\varphi_{t}(U)\ll_M \varphi_{t}(V)$ for any $t\geq t_0$.
\end{prop}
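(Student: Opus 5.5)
Fix $t_0>0$. The plan is to pass from $x<_M y$ to a strict relation at an intermediate time, and then use the fact that strict conal order is open and is pushed forward by the flow. Taking $t_1=t_0/2$, Proposition \ref{monotone property}(i) gives $\varphi_{t_1}(x)\ll_M\varphi_{t_1}(y)$. Let $\gamma$ be a strictly conal curve from $\varphi_{t_1}(x)$ to $\varphi_{t_1}(y)$, and pick an interior point $w=\gamma(1/2)$. Then
\[
\varphi_{t_1}(x)\ll_M w\ll_M \varphi_{t_1}(y),
\]
so $\varphi_{t_1}(x)\in I^-(w)$ and $\varphi_{t_1}(y)\in I^+(w)$. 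Both sets are open by Proposition \ref{openness}. Continuity of $\varphi_{t_1}$ then yields neighborhoods $U$ of $x$ and $V$ of $y$ with $\varphi_{t_1}(U)\subset I^-(w)$ and $\varphi_{t_1}(V)\subset I^+(w)$. By transitivity of $\ll_M$, this gives $\varphi_{t_1}(u)\ll_M\varphi_{t_1}(v)$ for all $u\in U$ and $v\in V$.

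Next we propagate this to all times $t\ge t_0$. For $u\in U$ and $v\in V$, a strictly conal curve $\sigma$ joins $\varphi_{t_1}(u)$ to $\varphi_{t_1}(v)$. Its tangent vectors lie in $\operatorname{Int}C_M\setminus\{0\}$, so SDP (Definition \ref{differential positive}) shows that $\varphi_s\circ\sigma$ is again strictly conal for every $s>0$; this is the same computation as in Proposition \ref{monotone property}. For $s=0$ nothing needs to be checked. Hence $\varphi_t(u)\ll_M\varphi_t(v)$ for all $t\ge t_1$, and in particular for all $t\ge t_0$.

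There is one subtle point. A strictly conal curve may be degenerate, or $\varphi_{t_1}(u)$ may equal $\varphi_{t_1}(v)$, which would happen if $\varphi_{t_1}(x)\in I^+(\varphi_{t_1}(x))$. In that case the relation $\ll_M$ still holds through the closed strictly conal curve, and pushing that curve forward under $\varphi_s$ keeps it strictly conal. So no case split is needed.

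The main obstacle is making the neighborhoods uniform in $t$. I avoid this by building them at the single time $t_1$ and then letting monotonicity carry the relation forward. The approach uses only the openness of $I^\pm$ and the transitivity of $\ll_M$, so hypotheses (H2) and (H3) are not needed here.
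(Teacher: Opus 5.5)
Your proof is correct and follows the same route as the paper. Both arguments get strict order at a positive time via Proposition \ref{monotone property}(i), use openness of $I^\pm$, and pull back by continuity of the flow. Your intermediate point $w$ and your explicit forward propagation for $t\ge t_0$ make rigorous two steps that the paper's short proof leaves implicit: how openness yields $\bar U\ll_M\bar V$ for whole neighborhoods rather than just for the base points, and why the relation persists for all times beyond $t_0$.
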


\begin{proof}
	Since $x<_M y$, then $\varphi_{t_0}(x)\ll_M \varphi_{t_0}(y)$ for $t_0>0$. One can take neighborhoods $\bar{U}$ of $\varphi_{t_0}(x)$ and $\bar{V}$ of $\varphi_{t_0}(y)$ such that $\bar{U}\ll_M\bar{V}$ by Proposition \ref{openness}. By the continuity of $\varphi_{t_0}$, there are neighborhoods $U$ of $x$, $V$ of $y$ such that $\varphi_{t_0}(U)\subset \bar{U}$ and $\varphi_{t_0}(V)\subset \bar{V}$.
\end{proof}

\subsection{Main results}\label{mr}

Throughout this paper, we always make the following assumptions:
Homogeneous space $M=G/H$ is endowed with a $G$-invariant Riemannian metric $(\cdot,\cdot)^{M}$ (which induces a Riemannian distance $d$ on $M$) and Lie group $G$ is endowed with a left-invariant Riemannian metric $(\cdot,\cdot)^{G}$ (which induces a Riemannian distance $d_G$ on $G$).

\begin{enumerate}
	\item[{\bf (H1)}] The cone field $C_M$ is a solid homogeneous cone field. 
	%($ I^{\pm}(x)=g_{x_{t}}^{x} I^{\pm}(x_{t})$)
	\item[{\bf (H2)}] The conal order ``$\leq_M$" is quasi-closed. %($\varphi_{t}(\overline{I^{\pm}(p)}\backslash\{p\})\subset I^{\pm}(\varphi_{t}p)$ for $t>0$)
	%\item[{\bf (H3)}] The Riemannian metric on $M$ is invariant.  %($d(x,y)=d(g_{a}^{b}x,g_{a}^{b}y)$)
	\item[{\bf (H3)}] The chronological sets $I^+$ and $I^-$ are continuous.
	\item[{\bf (H4)}] The system $\Sigma$ is {\rm SDP}, furthermore, each orbit is well-defined for all $t\geq 0$ and possesses a compact closure.
\end{enumerate}

We have the following theorem:

\begin{thm}\label{Thm_2.1}
	Assume that  {\rm (H1)-(H4)} hold. Let $B$ be a connected component of Birkhoff center $\mathcal{B}(\varphi)$. Then $B$ is either
	\begin{enumerate}[{\rm(i)}]
		\item strongly ordered, i.e., any two points in $B$ are strongly ordered; or
		\item unordered, i.e., any two points in $B$ are unordered.
	\end{enumerate}
\end{thm}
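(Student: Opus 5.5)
The plan is to follow the architecture announced in the introduction: first describe the order structure of $\omega$-limit sets, then prove an intersection principle on chronological boundaries, and finally run a connectedness argument on $B$.

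\emph{Step 1 (ordered recurrent points).} Suppose $x\in\mathcal{R}(\varphi)$ and $x\leq_M y$ for some $y\in\omega(x)$, $y\neq x$. By Proposition~\ref{monotone property} the order becomes strong after positive time. Since $x\in\omega(x)$, there are times $t_n\to\infty$ with $\varphi_{t_n}(x)\to x$. Using Proposition~\ref{strong flow open relation}, I would show that the whole of $\omega(x)$ lies in $I^+(z)\cap I^-(z)$ for every $z\in\omega(x)$. This is the pseudo-ordered phenomenon of Proposition~\ref{pseudoorder-orbit}, upgraded from $\leq_M$ to $\ll_M$ via SDP. The resulting dichotomy I aim for (the limit-set dichotomy, Lemma~\ref{limit_set_dichotomy}) is: either $x\in I^+(x)$, and then $\omega(x)$ is strongly ordered with every point strongly related to every other in both directions; or $x\in\partial I^+(x)\cap\partial I^-(x)$ (Proposition~\ref{chronological boundary}(ii)), and then $x$ is unordered with the rest of its orbit closure.

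\emph{Step 2 (intersection principle).} I would prove that if $p$ is recurrent and $q$ is recurrent with $q\in\partial I^+(p)$, then $q$ and $p$ lie in a common ``closed loop'' and are in fact strongly ordered. By (H2) and Proposition~\ref{quasi-closed-prop}, $q\in\partial I^+(p)$ gives $p\leq_M q$ with $q\notin I^+(p)$. If $p\neq q$, then SDP gives $\varphi_t(p)\ll_M\varphi_t(q)$. Choose common return times $t_n$ (approximately recurrent for both points, obtained by passing to subsequences in the compact closure). Use the homogeneous translations $g_{\varphi_{t_n}p}^{p}$, which converge to elements near $e$ by Proposition~\ref{distance-limit}. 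With Proposition~\ref{homogeneous-prop} and the inner and outer continuity (H3), this should yield $q\in I^+(p)$, a contradiction. Hence boundary points of $I^\pm(p)$ that are recurrent must coincide with $p$.

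\emph{Step 3 (connectedness).} For $x\in B$ the sets $\{y\in B: x\ll_M y \text{ or } y\ll_M x\}$ are open by Proposition~\ref{openness}. The set of points of $B$ unordered with $x$ should be open in $B$ by (H3) outer continuity together with Step 2, which rules out points of $B$ on $\partial I^\pm(x)$ other than those strongly ordered. I would first carry this out for recurrent points and then pass to $\mathcal{B}(\varphi)$ by density, using (H2) and continuity of $I^\pm$. This shows that the relation ``strongly ordered or equal'' and the relation ``unordered or equal'' each define a partition of $B$ into relatively open sets. By connectedness of $B$, either all pairs are strongly ordered or all pairs are unordered. Mixed ordered-but-not-strongly cases are excluded by Step 2.

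The main obstacle is Step 2. Passing from the merely quasi-closed relation to a genuine boundary contradiction requires controlling the translated chronological sets uniformly along return times. That is exactly where the homogeneous metric (Proposition~\ref{distance-limit}) and (H3) must be combined delicately. I would first establish a soft version, for $\omega$-limit points only, and then bootstrap using the Step 1 dichotomy.
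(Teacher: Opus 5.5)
Your Step 2, which carries the whole argument, does not work as described. You propose to choose common return times $t_n$ for the two recurrent points $p,q$ by passing to subsequences in the compact closure. Compactness only gives $\varphi_{t_n}(p)\to p$ and $\varphi_{t_n}(q)\to q'$ for some $q'\in\omega(q)$. Recurrence of each point does not make the pair $(p,q)$ recurrent for the product flow, so simultaneous returns are not available. Even granting them, translating by $g^{p}_{\varphi_{t_n}(p)}\to e$ and letting $n\to\infty$ only gives $q\in\overline{I^+(p)}$, which you already had; a uniform margin is needed.

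The paper's proof of the soft principle, Lemma~\ref{sep_n}, is built to avoid simultaneous recurrence:
\begin{enumerate}[(a)]
\item It first shows $x\notin\omega(z)$ (Proposition~\ref{notin}). Hence $\omega(z)$ lies in an annulus $A$ around $x$, and $\partial I^+_A(x)$ is a compact set not containing $x$.
\item For $W_x=\omega(z)\cap\overline{I^+(x)}$ it obtains a margin ${\rm \underline{dist}}(W^t_x,\partial I^+_A(x))\ge\delta$, uniform in $t>1$ (Proposition~\ref{R(x_t)}). This uses $\varphi_1(\overline{I^+(x_{t-1})}\setminus\{x_{t-1}\})\subset I^+(x_t)$, (H3), and compactness of $\overline{O^+(x)}$.
\item It takes a single near-return time $t_0$ of $x$ alone and gets $\varphi_{t_0}(W_x)\subset W_x$ at distance $>\eta$ from $\partial I^+(x)$ (Proposition~\ref{W_{x}}).
\item Iterating gives $d(\varphi_s(z),z)>\eta/2$ for all $s\in\mathcal{T}(t_0,\varepsilon_0)$. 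This contradicts the fact that the return-time set of the recurrent point $z$ contains an IP-set, which meets every $\mathcal{T}(\tau,\varepsilon)$.
\end{enumerate}
This invariant-set-plus-IP-set device is the missing idea, and nothing in your sketch replaces it.

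Two further steps are also missing.

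\emph{Passing from $\mathcal{R}(\varphi)$ to $\mathcal{B}(\varphi)$.} Step 2 only excludes recurrent points from $\partial I^\pm(x)$, while $B$ contains non-recurrent points of $\mathcal{B}(\varphi)$. Your Step 1 dichotomy is a single-orbit statement (essentially Propositions~\ref{pseudoorder-orbit} and \ref{ou}). It relates no two distinct recurrent points, so it cannot do the bootstrap. The paper's Lemma~\ref{limit_set_dichotomy} is genuinely a two-point statement: $\omega(x)$ and $\omega(y)$ are jointly strongly ordered or jointly unordered, proved via Propositions~\ref{sep_n_1}, \ref{sep_n_2} and Corollary~\ref{absorb}. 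It is what excludes $z\in\mathcal{B}(\varphi)\cap\partial I^-(x)$ with $z\neq x$, using recurrent $z_n\to z$ that inherit both $\varphi_\tau(z_n)\ll_M\varphi_\tau(x)$ and $\varphi_{-\tau}(z_n)\notin\overline{I^-(\varphi_{-\tau}(x))}$. Alternatively, from these two open conditions you could use connectedness of $[-\tau,\tau]$ to find $s_n$ with $\varphi_{s_n}(z_n)\in\partial I^-(\varphi_{s_n}(x))$. Lemma~\ref{sep_n} at the recurrent point $\varphi_{s_n}(x)$ then forces $z_n=x$. Either way, some such argument must be given.

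\emph{The connectedness step.} The conclusion of Step 3, ``by connectedness of $B$'', is too quick. For each $x$ you only get a splitting of $B\setminus\{x\}$ into two relatively open sets, which is compatible with connectedness of $B$ when $x$ is a cut point. This can happen when $x\in\partial I^+(x)\cap\partial I^-(x)$: Lemma~\ref{endpoint}(i) leaves $x$ on the boundary, and one gets $B_I\cap B_K=\{x\}$. The paper handles this case as follows:
\begin{enumerate}[(a)]
\item It proves $B_I$ and $B_K$ are connected.
\item It takes a recurrent $x^*$ near a point of $B_K\setminus\{x\}$ close to $x$, such that $x^*$ is strongly ordered with some $c\in B_I$ and $x\notin\overline{I^+(x^*)\cup I^-(x^*)}$.
\item The intersection principle at $x^*$ (note $x^*\notin B_I$) then disconnects $B_I$.
\end{enumerate}
Finally, the paper only applies the intersection principle at recurrent base points, chosen near a given point of $B$ via (H3). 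Your plan to have it at every $x\in B$ ``by density'' would need its own argument.
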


This theorem will be proved in Section \ref{Limit dich}. A direct consequence of Theorem \ref{Thm_2.1} is a characterization of the order structure of the supports of invariant measures on $M$. Recall that a probability measure $\mu$ is said to be invariant if $\mu \circ \varphi=\mu$. The support of $\mu$, denoted by $\text{supp} (\mu)$, is defined as the complement in $M$ of the union of all open sets $U$ satisfying $\mu(U)=0$. Since $\text{supp}(\mu)\subset \mathcal{B}(\varphi)$ (see, e.g., \cite[p.28]{M12} or \cite[Appendix A]{J20}), Theorem \ref{Thm_2.1} directly yields the following result:

\vskip 3mm

\begin{cor}\label{Thm_2.2}
	Assume that  {\rm (H1)-(H4)} hold. Let $B\subset\text{supp} (\mu)$ be any connected component of the support of an invariant measure $\mu$. Then $B$ is either strongly ordered or unordered.
\end{cor}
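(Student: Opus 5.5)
The corollary follows from Theorem \ref{Thm_2.1} together with the inclusion $\text{supp}(\mu)\subset\mathcal{B}(\varphi)$. The only subtlety is that a connected component of $\text{supp}(\mu)$ need not be a whole connected component of $\mathcal{B}(\varphi)$. I will therefore argue by containment rather than equality, using the fact that both alternatives of the dichotomy are hereditary under passing to subsets.

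First, $\text{supp}(\mu)\subset\mathcal{B}(\varphi)$ holds by the Poincaré recurrence theorem, as recalled before the statement (\cite[p.28]{M12}, \cite[Appendix A]{J20}). Here (H4) guarantees precompact orbits, so the recurrence statement applies with $\mu$-almost every point recurrent and $\text{supp}(\mu)$ contained in the closure of $\mathcal{R}(\varphi)$.

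Next, let $B$ be a connected component of $\text{supp}(\mu)$. Then $B$ is a connected subset of $\mathcal{B}(\varphi)$, so it lies in a unique connected component $\tilde B$ of $\mathcal{B}(\varphi)$, namely the component containing any fixed point of $B$. Applying Theorem \ref{Thm_2.1} to $\tilde B$ shows that $\tilde B$ is either strongly ordered or unordered.

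Finally, both properties are defined pairwise in Definition \ref{def-order}: for any two points $x,y$ of the set, either $x\ll_M y$ or $y\ll_M x$, or else neither $x\leq_M y$ nor $y\leq_M x$. Any subset of a strongly ordered set is therefore strongly ordered, and any subset of an unordered set is unordered. Since $B\subset\tilde B$, $B$ inherits the corresponding alternative.

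The only step that needs any care is the first: the inclusion of the support in the Birkhoff center for a flow on a possibly noncompact $M$. Because (H4) gives each orbit a compact closure, the standard argument goes through. Cover $\text{supp}(\mu)$ minus the recurrent points by countably many open sets and apply Poincaré recurrence to each; this shows that the non-recurrent points form a $\mu$-null set. Hence recurrent points are dense in the support, so the support lies in $\mathcal{B}(\varphi)$. I will simply cite this.
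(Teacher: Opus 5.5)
Your proposal is correct and follows the paper's own argument: the paper likewise notes $\text{supp}(\mu)\subset\mathcal{B}(\varphi)$ (via Poincar\'e recurrence) and applies Theorem \ref{Thm_2.1}. Your extra remark — that a component of $\text{supp}(\mu)$ lies inside a component of $\mathcal{B}(\varphi)$ and that both alternatives pass to subsets — spells out the step the paper leaves implicit in ``directly yields''. As a minor point, the inclusion of the support in the Birkhoff center needs only a countable base for $M$ and finiteness of $\mu$, not the compact orbit closures from (H4).
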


\vskip 3mm

\begin{rmk}
	Forni and Sepulchre \cite{ForniandSepulchre16} first investigated the dynamics of differentially positive systems on a Riemannian manifold. They established a dichotomy for the limit sets of differentially positive systems (see \cite[Theorem 4]{ForniandSepulchre16}). Recently,	the present authors \cite{NW25,NWZ26} characterized the generic behavior of differentially positive systems, mainly from the perspective of $\omega$-limit sets for individual orbits. 
	However, we point out that  the recurrence and statistical behavior cannot solely rely on the	analysis of the limit sets of individual orbits. In light of our results, we have presented feasible and effective information for the recurrence of differentially positive systems.
\end{rmk}

\begin{rmk}
	Needless to say, differentially positive systems can be regarded as a natural generalization of the so-called classical monotone systems from flat spaces to nonlinear manifolds. On flat spaces, Theorem \ref{Thm_2.1}, as well as Corollary \ref{Thm_2.2}, reduces to a celebrated theorem of Hirsch (see \cite[Theorem 1.6]{H99} and \cite{Mi00}), which states that any connected component $B$ of the Birkhoff center (or more generally, any attractor-free set $B$) either is unordered, or is a stationary p-arc. This theorem has been wildly applied to investigate the asymptotic behavior of stochastic approximation, as well as stochastic stability of differential equations (see \cite{B00,BF21,BH99,JWS26} and references therein). For differentially positive systems, we are {\it led to conjecture} that, if the connected component of the Birkhoff center	on a homogeneous space is strongly ordered, then it must be either a fixed point or a periodic orbit. 
\end{rmk}

\noindent\section{Intersection principle for chronological boundary}\label{s3}
	
   	In this section, we aim to establish the intersection principle by analyzing the chronological boundary of any recurrent point, which turns out to be an important tool to prove Theorem \ref{Thm_2.1}. 
    	
   	Throughout this section, all the assumptions in Subsection \ref{mr} are assumed to hold.

	\begin{lem}\label{endpoint}
		{\rm(Intersection Principle).}
		Let $x\in \mathcal{R}(\varphi)$. Then the following two assertions hold:
        \begin{enumerate}[{\rm (i)}]
			\item If $x\in \partial I^{+}(x)\cup \partial I^{-}(x)$, then $\mathcal{B}(\varphi)\cap (\partial  I^{+}(x)\cup \partial  I^{-}(x))=\{x\}$;
            
            \item If $x\in I^{+}(x)\cup I^{-}(x)$, then $\mathcal{B}(\varphi)\cap (\partial  I^{+}(x)\cup \partial  I^{-}(x))=\emptyset$.
        \end{enumerate}
	\end{lem}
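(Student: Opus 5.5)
Suppose $y\in\mathcal{B}(\varphi)\cap\partial I^{+}(x)$ (the case $\partial I^-(x)$ is symmetric) and, in case (i), $y\neq x$. By Proposition \ref{quasi-closed-prop}, $x\le_M y$ and $y\notin I^+(x)$. The plan is to derive a contradiction from strong differential positivity, using recurrence of $x$ and the homogeneous structure (Propositions \ref{distance-limit}, \ref{homogeneous-prop}) to move statements about $\varphi_t(x)$ back to statements about $x$. Note also that by Proposition \ref{chronological boundary} we have $x\in\partial I^+(x)$ iff $x\in\partial I^-(x)$, and likewise for the interiors. So in case (i) both $x\ll_M x$ and $x \ll_M$-relations through $x$ itself fail, while in case (ii) $x\ll_M x$.

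\textbf{Step 1: the case $y\neq x$.} Then $x<_M y$, so Proposition \ref{monotone property} gives $\varphi_t(x)\ll_M\varphi_t(y)$ for all $t>0$. By Proposition \ref{strong flow open relation} there are neighborhoods $U\ni x$ and $V\ni y$ with $\varphi_t(U)\ll_M\varphi_t(V)$ for all $t\ge t_0$.

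\textbf{Step 2: return to $x$.} Since $x$ is recurrent, take $t_n\to\infty$ with $\varphi_{t_n}(x)\to x$. Since $y\in\mathcal{B}(\varphi)$, approximate $y$ by recurrent points $y_k\in V$. The aim is to show $\varphi_{t_n}(y_k)$ also returns near $y$ along a common time sequence. The difficulty is that the recurrence times of $x$ and of $y_k$ are not synchronized.

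To handle this, I would transport by the group. Write $x_n=\varphi_{t_n}(x)$ and $g_n=g_{x_n}^{x}\to e$ (Proposition \ref{distance-limit}). By Propositions \ref{homogeneous-prop} and \ref{groupkeepsorder}, $g_n\varphi_{t_n}(y)\in I^+(x)$, since $\varphi_{t_n}(y)\in I^+(x_n)$. Then use inner and outer continuity (H3) of $I^+$, together with a compact neighbourhood argument, to conclude that a neighbourhood of $y$ lies in $\overline{I^+(x)}$ or meets the exterior in a controlled way. Combined with the $\omega$-limit ordering (Proposition \ref{pseudoorder-orbit} / the results of Subsection \ref{s3.1}, presumably all points of an ordered $\omega$-limit are mutually related), this should place $y\in I^+(x)$, contradicting $y\in\partial I^+(x)$.

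\textbf{Step 3: the soft version.} The paper's roadmap says a soft version (Lemma \ref{sep_n}) is proved first. I would expect it to read: $\mathcal{B}(\varphi)\cap\partial I^{\pm}(x)\subset\omega(x)$, or that it is contained in a set whose points are all $\le_M$-related to $x$ in both directions. The limit-set dichotomy (Lemma \ref{limit_set_dichotomy}) would then upgrade it. For a recurrent $x$, either $\omega(x)\subset I^+(x)\cap I^-(x)$ (case $x\ll_M x$), or $\omega(x)$ meets the boundary only at $x$. In case (ii), recurrence plus openness of $I^+(x)$ puts all nearby recurrent points strictly inside, so no Birkhoff-center point lies on the boundary. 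In case (i), the same argument excludes every point except $x$ itself, which lies on the boundary by assumption.

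\textbf{Main obstacle.} The hard step is Step 2: synchronizing the recurrence of $x$ with that of points near $y$. I would try to avoid needing $y$ recurrent by using only that $\varphi_{t_n}(y)$ stays in the precompact set $\overline{O^+(y)}$ and passing to a limit $y^*$. Then apply quasi-closedness (H2) to $x_n\ll_M\varphi_{t_n}(y)$ to get $x\le_M y^*$, and transport back via $g_n$. The remaining difficulty is turning $\le_M$ into $\ll_M$ robustly, which is where the homogeneity of the metric and (H3) must be used.
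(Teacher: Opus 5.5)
Your proposal never reaches a contradiction. The places where it would have to do so either fail or are left open.

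\textbf{Steps 1--2 and the fallback.} Everything you derive is forward-time information, and forward in time nothing is contradictory: $x<_M y$ already gives $\varphi_t(x)\ll_M\varphi_t(y)$ for all $t>0$. The transported relation $g_n\varphi_{t_n}(y)\in I^{+}(x)$ says nothing about $y$ unless $\varphi_{t_n}(y)$ returns near $y$ along the same times, and you do not establish that. The fallback in your last paragraph, $x\le_M y^*$ for some $y^*\in\omega(y)$, is equally harmless.

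\textbf{Avoiding recurrence of $y$.} Your suggestion to avoid using recurrence of $y$ cannot work in principle. The set $\partial I^{+}(x)\setminus\{x\}$ is nonempty in general; for example it is $x+\partial\mathbb{R}^n_+$ minus $x$ for an equilibrium $x$ of a strongly monotone flow on $\mathbb{R}^n$. So $y\in\mathcal{B}(\varphi)$ must be used essentially.

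\textbf{Step 3.} The soft version actually available (Lemma \ref{sep_n}) concerns only recurrent points: $\mathcal{R}(\varphi)\cap(\partial I^{+}(x)\cup\partial I^{-}(x))\subset\{x\}$. It is not your guessed $\mathcal{B}(\varphi)\cap\partial I^{\pm}(x)\subset\omega(x)$. Your proposed upgrade, that openness of $I^{+}(x)$ puts nearby recurrent points strictly inside, is false. Openness says nothing at a boundary point, and recurrent points accumulating at $z\in\partial I^{+}(x)$ can lie on either side of $\partial I^{+}(x)$. Passing from $\mathcal{R}(\varphi)$ to its closure is exactly the content of the lemma, and your plan has no mechanism for it.

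\textbf{The missing idea.} Flow the boundary point both ways, then invoke the limit-set dichotomy (Lemma \ref{limit_set_dichotomy}).
\begin{itemize}
\item Take $z\in\mathcal{B}(\varphi)\cap\partial I^{-}(x)$ with $z\neq x$. Proposition \ref{quasi-closed-prop} and SDP give, for $\tau>0$, both $\varphi_\tau(z)\ll_M\varphi_\tau(x)$ and $\varphi_{-\tau}(z)\notin\overline{I^{-}(\varphi_{-\tau}(x))}$. The second holds because otherwise $\varphi_{-\tau}(z)<_M\varphi_{-\tau}(x)$, whence $z\ll_M x$.
\item Both conditions are open, so recurrent $z_n\to z$ satisfy them for large $n$.
\item Since $\varphi_{\pm\tau}(x)\in\omega(x)$ and $\varphi_{\pm\tau}(z_n)\in\omega(z_n)$, the dichotomy for the recurrent pair $x,z_n$ gives incompatible conclusions: strongly ordered from the forward pair, unordered from the backward pair.
\end{itemize}
The dichotomy is a statement about whole $\omega$-limit sets, so no synchronization of return times is needed at this stage.

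\textbf{Where synchronization is handled.} It matters inside the proof of the soft version, on which the dichotomy rests via Propositions \ref{sep_n_1}, \ref{sep_n_2} and Corollary \ref{absorb}.
\begin{itemize}
\item The paper uses your group-transport idea in the form $W_x^t=g^{x}_{x_t}\varphi_t(W_x)$, with $W_x=\omega(z)\cap\overline{I^{+}(x)}$.
\item It fixes a single return time $t_0$ of $x$ to obtain $\varphi_{t_0}(W_x)\subset W_x$ at distance $>\eta$ from $\partial I^{+}(x)$, and iterates along $nt_0$.
\item It then uses that the return-time set of the recurrent point $z$ contains an IP-set, and hence meets $\{nt_0+t:\,|t|<\varepsilon_0\}$.
\end{itemize}
That IP-set step is the ingredient that resolves the obstacle you flagged.
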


    \begin{rmk}
    This lemma reveals that on the chronological boundary of any recurrent point $x$, there are no other points of Birkhoff center, except possibly for $x$ itself. 
    The classification in this lemma depends on whether $x$ belongs to a nontrivial closed conal curve or not. Moreover, Proposition \ref{chronological boundary} gives the the location of $x$, $x\in \overline{I^{+}(x)}\cap \overline{I^{-}(x)}$, and ensures the classification is reasonable.
    \end{rmk}

   	The proof of Lemma \ref{endpoint} is presented in Subsection \ref{s3.4}. To achieve this, we analyze the chronological boundary of any recurrent point $x$. First, in Subsection \ref{s3.1}, we provide some preliminary results and present properties regarding the order structure of $\omega$-limit sets. Then, Subsection \ref{s3.2} introduces a soft intersection principle (Lemma \ref{sep_n}), demonstrating that $x$ is the only possible recurrent point on $\partial I^{+}(x)\cup \partial I^{-}(x)$. Next, in Subsection \ref{s3.3}, we establish a technical lemma “limit-set dichotomy for recurrent points" (Lemma \ref{limit_set_dichotomy}), which proves crucial for deriving the intersection principle.

\subsection{Order structure of $\omega$-limit sets}\label{s3.1}

\begin{prop}\label{pseudoorder-orbit}
	If $x\in \mathcal{R}(\varphi)$ and there is some $T>0$ such that either $x<_M \varphi_T(x)$ or $\varphi_T(x)<_M x$, then $\omega(x)$ is ordered, furthermore, for any $y_1, y_2 \in \omega(x)$, both $y_1\leq_M y_2$ and $y_2\leq_M y_1$ hold. 
\end{prop}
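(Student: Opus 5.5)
The plan is to treat the case $x<_M\varphi_T(x)$; the case $\varphi_T(x)<_M x$ is symmetric, with the roles of $I^+$ and $I^-$ exchanged. Throughout, write $x_s=\varphi_s(x)$. Since $x\in\omega(x)$, and $\omega(x)$ is closed and invariant by (H4), we get $\omega(x)=\overline{O^+(x_\epsilon)}$ for every $\epsilon>0$. Hence every point of $\omega(x)$ is a limit of points $x_{a_n}$ with $a_n\ge\epsilon$ as large as we like. So it suffices to produce, for $y_1,y_2\in\omega(x)$, sequences $x_{a_n}\to y_1$ and $x_{b_n}\to y_2$ with $x_{a_n}\ll_M x_{b_n}$ for all $n$. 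Quasi-closedness (H2) then gives $y_1\le_M y_2$, and exchanging roles gives $y_2\le_M y_1$.

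\emph{Forward chain.} Fix $\epsilon>0$. By Proposition \ref{monotone property}(i), $x<_M x_T$ gives $x_t\ll_M x_{t+T}$ for all $t>0$. We have $x_t\neq x_{t+T}$, because otherwise $x=x_T$ by injectivity of $\varphi_t$. So Proposition \ref{monotone property}(i) applies again to $x_t<_M x_{t+T}$. Iterating and using transitivity of $\ll_M$, we obtain $x_t\ll_M x_{t+nT}$ for all $t>0$ and $n\ge1$.

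\emph{Backward chain from recurrence.} By Proposition \ref{strong flow open relation}, applied to $x<_M x_T$, there are neighbourhoods $U\ni x$ and $V\ni x_T$ with $\varphi_t(U)\ll_M\varphi_t(V)$ for all $t\ge\epsilon$. Recurrence gives $t_k\to\infty$ with $x_{t_k}\in U$. Hence $x_{t+t_k}\ll_M x_{t+T}$ for all $t\ge\epsilon$ and all large $k$. Combined with the forward chain, this yields relations in both directions between early and late times: $x_{t+t_k}\ll_M x_{t+T}\ll_M x_{t+T+nT}$, and $x_t\ll_M x_{t+nT}$.

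\emph{Passing to $\omega(x)$.} Given $y_1=\lim x_{a_n}$ and $y_2=\lim x_{b_n}$, I will use recurrence to replace $b_n$ by a time $b_n'$ with $x_{b_n'}$ close to $y_2$ and $b_n'=a_n+mT$ for some $m$. Then the forward chain gives $x_{a_n}\ll_M x_{b_n'}$. To justify the replacement, I first use the openness of $I^+(x_{a_n})$ (Proposition \ref{openness}) and the relation $x_{a_n}\ll_M x_{a_n+T}$. Next, since $x_{a_n+T}$ is itself recurrent with orbit closure $\omega(x)$, I move along its orbit to a time at which it is near $y_2$. Finally, I transport the strict relation along with it, using the backward chain in the form $\varphi_t(U)\ll_M\varphi_t(V)$ together with transitivity. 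A diagonal choice of $n$ and $k$ then produces the required sequences.

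The main obstacle is exactly this last step. It requires relating two \emph{arbitrary} late times $a,b$ by $\ll_M$, rather than only times differing by multiples of $T$ or by return times $t_k$. Since $\le_M$ is not closed, limits must always be taken through $\ll_M$-relations and (H2). If the direct construction resists, my fallback is to prove first that $x\le_M x_s$ and $x_s\le_M x$ for every $s>0$; each of these should follow from quasi-closedness applied to the chains above along the return times. Then Proposition \ref{monotone property} propagates these relations to $x_a\ll_M x_b$ for all $a,b>0$, and one final application of (H2) yields both relations on all of $\omega(x)$.
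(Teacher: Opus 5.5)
\paragraph{Verdict: the key step is missing.} Your forward chain $x_t\ll_M x_{t+nT}$ and your backward chain $x_{s+t_k}\ll_M x_{s+T}$ (for $s\ge\epsilon$) are correct. So is your reduction to (H2) via sequences. The gap is the step you flag yourself. Nothing in the proposal produces a relation $x_a\ll_M x_b$, or even $x_a\le_M x_b$, for an \emph{arbitrary} time difference $b-a$, and that is the entire content of the statement.

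Your proposed device is to replace $b_n$ by $b_n'=a_n+mT$ with $x_{b_n'}$ near $y_2$. This is unjustified. Nothing in your argument rules out, e.g., that $x$ is periodic of minimal period $2T$. In that case $x_{a_n+mT}$ takes only the two values $x_{a_n}$ and $x_{a_n+T}$. Your suggested ``transport'' of $x_{a_n}\ll_M x_{a_n+T}$ along the orbit of $x_{a_n+T}$ to a time near $y_2$ presupposes $x_{a_n+T}\le_M x_{a_n+T+s}$ for arbitrary $s$, which is what has to be proved.

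The fallback has the same defect. Applying (H2) along return times to your chains only gives $x_\sigma\le_M x_{\sigma+d}$ for $d$ in the additive semigroup generated by $T$ and the numbers $T-t_k$. In the periodic situation just mentioned, with the $t_k$ taken to be multiples of the period, this semigroup lies in $T\mathbb{Z}$.

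\paragraph{The missing ingredient.} You need a continuity argument in the time variable. The paper supplies it as follows.
\begin{enumerate}
\item Set $T_1=\inf\{0<t\le T: x\ll_M x_\tau \text{ for all } \tau\in[t,T]\}$.
\item Using Proposition \ref{strong flow open relation}, recurrence, (H2) and SDP, show that $x\le_M x_{T_1}$ with $x\ne x_{T_1}$ would give $x\ll_M x_{T_1+s}$ for all small $|s|$. This contradicts the definition of $T_1$.
\item A symmetric $\sup$ argument then yields a point $y$ on the orbit with $y\ll_M\varphi_t(y)$ for all $t>0$. After that, (H2) concludes exactly as in your reduction.
\end{enumerate}

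Your own ingredients can also be completed. Let $D$ be the set of $d\in\mathbb{R}$ such that $x_s\ll_M x_{s+d}$ for all sufficiently large $s$.
\begin{enumerate}
\item $D$ is closed under addition.
\item Openness of $I^+(x_\epsilon)$ (Proposition \ref{openness}) together with SDP gives $(T-\eta,T+\eta)\subset D$ for some $\eta>0$. In the degenerate case $x_\epsilon=x_{\epsilon+T+r}$, use Proposition \ref{monotone property}(ii) instead.
\item By additivity, $D$ then contains a half-line $[N,\infty)$.
\item Your backward chain gives $T-t_k\in D$ with $t_k\to\infty$, hence $D=\mathbb{R}$.
\item For $y_1,y_2\in\omega(x)$, pick $x_p$ and $x_q$ close to them. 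Shift both times by a large return time $\tau_m$, so that $x_{p+\tau_m}$ is close to $x_p$, $x_{q+\tau_m}$ is close to $x_q$, and $p+\tau_m$ exceeds the threshold for $d=q-p$. Then apply (H2).
\end{enumerate}

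As written, the proposal contains neither argument, so the central step is missing.
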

	
\begin{proof}
	Firstly, if there is some $T>0$ such that $x<_M \varphi_T(x)$, we {\it claim that there exists $y\in O^{+}(x)$ such that $y\ll_M \varphi_{t}(y)$ for all $t>0$}. If the claim holds, we could obtain the proposition immediately. In fact, the claim implies that $O^{+}(y)$ is strongly ordered, i.e., for any $0<t_1<t_2$, $\varphi_{t_1}(y)\ll_M \varphi_{t_2}(y)$. Let $y_1, y_2$ be two distinct points in $\omega(x)$. So, $y_1, y_2 \in \omega(y)$ and there exist two sequences $t_k \to \infty$ and $t_l \to \infty$ such that $\varphi_{t_k}(y)\to y_1$ as $k\to \infty$ and $\varphi_{t_l}(y)\to y_2$ as $l\to \infty$. Then for each $k$, there is a $l(k)$ such that $t_k<t_{l(k)}$ and $\varphi_{t_k}(y)\ll_M \varphi_{t_{l(k)}}(y)$. Thus, $y_1\leq_M y_2$ by the quasi-closedness of the order. A similar argument shows that $y_2\leq_M y_1$.

	Now, it suffces to prove the claim.	Since the system is strongly differentially positive, the fact that $x<_M \varphi_T(x)$ implies that $\varphi_{t}(x)\ll_M \varphi_{t}(\varphi_T(x))$ for any $t>0$. Without loss of generality, we write $x\ll_M \varphi_T(x)$. Let $T_1=\inf \{0 < t \leq T \colon x\ll_M \varphi_{\tau}(x) \text{ for all } \tau\in [t,T] \}$. The openness of the relation ``$\ll_M$" implies that $T_1<T$. Then $x\leq_M \varphi_{T_1}(x)$ and $x\ll_M \varphi_{\tau}(x)$ for all $\tau\in (T_1,T]$.
		
	If $x\neq \varphi_{T_1}(x)$, we will get a contradiction. In fact, there exist $t_0>0$ and neighborhoods $U_1$ of $x$, $V_1$ of $\varphi_{T_1}(x)$ such that $U_1 \cap V_1=\emptyset$ and $\varphi_{t}(U_1) \ll_M \varphi_{t}(V_1)$ for all $t\geq t_0$. Choose $\epsilon_1>0$ so small such that $B_{\epsilon_1}(\varphi_{T_1}(x))\subset V_1$, where $B_{\epsilon_1}(\varphi_{T_1}(x))$ is the ball centered at $\varphi_{T_1}(x)$ with the radius $\epsilon_1$. For such $\epsilon_1$ choose some $\delta>0$ such that $\varphi_{T_1}(B_{\delta}(x))\subset B_{\frac{\epsilon_1}{3}}(\varphi_{T_1}(x))$, and hence there exists some $l_1>0$ such that $\varphi_{T_1+s}(x)\in B_{\frac{\epsilon_1}{3}}(\varphi_{T_1}(x))$ for any $s\in [-l_1,l_1]$. So, for every $s\in [-l_1,l_1]$, one can find some $\delta(s)\in (0,\delta)$ such that $\varphi_{T_1+s}(B_{\delta(s)}(x))\subset B_{\epsilon_1}(\varphi_{T_1}(x))$. Furthermore, it follows from the compactness of $[-l_1,l_1]$ that $\inf_{s\in [-l_1,l_1]}\delta(s)=\delta_1 >0$. As a consequence, 
	\begin{equation}\label{pseudoorder-equ-1}
		\varphi_{T_1+s}(B_{\delta_1}(x))\subset B_{\epsilon_1}(\varphi_{T_1}(x))\subset V_1, \text{ for all } s\in [-l_1,l_1].
	\end{equation}
	One can also let $\delta_1$ be small, if necessary, such that $B_{\delta_1}(x)\subset U_1$. Hence, together with the fact that $\varphi_{t}(U_1) \ll_M \varphi_{t}(V_1)$ for all $t\geq t_0$, this implies that
	\begin{equation}\label{pseudoorder-equ-2}
		\varphi_{t}(B_{\delta_1}(x))\ll_M \varphi_{t}(B_{\epsilon_1}(\varphi_{T_1}(x))), \text{ for all } t\geq t_0.
	\end{equation}
	Choose $\epsilon>0$ so small such that $\varphi_{-\epsilon}(x)\in B_{\delta_1}(x)$. Then, \eqref{pseudoorder-equ-1} implies that $\varphi_{T_1+s}(\varphi_{-\epsilon}(x))\subset B_{\epsilon_1}(\varphi_{T_1}(x))\subset V_1$, for all $s\in [-l_1,l_1]$. Since $x\in \mathcal{R}(\varphi)$, there exists a sequence $\tau_n \to \infty$ such that $\varphi_{\tau_n}(x)\to x$ as $n\to \infty$. Together with \eqref{pseudoorder-equ-2}, this yields that
	\[
	\varphi_{\tau_n}(\varphi_{-\epsilon}(x))\ll_M \varphi_{\tau_n}(\varphi_{T_1+s}(\varphi_{-\epsilon}(x)))
	\]
	for all $s\in [-l_1,l_1]$ and $n$ sufficiently large. The quasi-closedness of the order implies that $\varphi_{-\epsilon}(x)\leq_M \varphi_{T_1+s}(\varphi_{-\epsilon}(x))$ as $n\to \infty$. Since $\varphi_{-\epsilon}(x)\subset U_1$ and $\varphi_{T_1+s}(\varphi_{-\epsilon}(x))\subset V_1$, then $\varphi_{-\epsilon}(x)\neq \varphi_{T_1+s}(\varphi_{-\epsilon}(x))$. Furthermore, the strongly differential positivity of the system implies that $x\ll_M \varphi_{T_1+s}(x)$ for any $s\in [-l_1,l_1]$, which contradicts the definition of $T_1$. So, we have proved that $x= \varphi_{T_1}(x)$. And hence $\varphi_{T_1}(x)\ll_M \varphi_{\tau}(x)$ for all $\tau\in (T_1,T]$.
		
	Let $y=\varphi_{T_1}(x)$. Then we have the following fact, i.e., there exists a time, denoted by $\tilde{T}$, such that $y\ll_M \varphi_{t}(y)$ for all $t\in (0,\tilde{T}]$. Let $T_2=\sup \{t \geq \tilde{T} \colon y\ll_M \varphi_{\tau}(y) \text{ for all } \tau\in [\tilde{T},t] \}$. If $T_2=+\infty$, we obtain the claim. If $T_2<+\infty$ with $y=\varphi_{T_2}(y)$, then the orbit passing though $y$ is a periodic orbit and each point on the orbit has the order relation with $y$. So, we obtain the claim.
		
	We only need to consider the case $T_2<+\infty$ with $y\neq \varphi_{T_2}(y)$. In this case, we will get a contradiction. In fact, the definition of $T_2$ implies that $y<_M \varphi_{T_2}(y)$. As a consequence, there exist $\tilde{t}_0>0$ and neighborhoods $U_2$ of $y$, $V_2$ of $\varphi_{T_2}(y)$ such that $U_2 \cap V_2 =\emptyset$ and $\varphi_{t}(U_2) \ll_M \varphi_{t}(V_2)$ for all $t\geq \tilde{t}_0$. Choose $\epsilon_2>0$ so small that $B_{\epsilon_2}(\varphi_{T_2}(y))\subset V_2$. It then follows from a similar argument for $T_1$ above that there are constants $l_2,\delta_2 >0$ such that
	\begin{equation}\label{pseudoorder-equ-3}
		\varphi_{T_2+s}(B_{\delta_2}(y))\subset B_{\epsilon_2}(\varphi_{T_2}(y))\subset V_2, \text{ for all } s\in [-l_2,l_2].
	\end{equation}
	By choose such $\delta_2>0$ small, if necessary, we assume that $B_{\delta_2}(y)\subset U_2$. Then 
	\begin{equation}\label{pseudoorder-equ-4}
		\varphi_{t}(B_{\delta_2}(y))\ll_M \varphi_{t}(B_{\epsilon_2}(\varphi_{T_2}(y))), \text{ for all } t\geq \tilde{t}_0.
	\end{equation}
	Choose $\tilde{\epsilon}>0$ so small such that $\varphi_{-\tilde{\epsilon}}(y)\in B_{\delta_2}(y)$. By virtue of \eqref{pseudoorder-equ-3}, we obtain the fact that $\varphi_{T_2+s}(\varphi_{-\tilde{\epsilon}}(y))\subset B_{\epsilon_2}(\varphi_{T_2}(y))\subset V_2$, for all $s\in [-l_2,l_2]$. Recall that $x\in \mathcal{R}(\varphi)$ and $y=\varphi_{T_1}(x)$. Then $y\in \mathcal{R}(\varphi)$, i.e., there exists a sequence $\tilde{\tau}_n \to \infty$ such that $\varphi_{\tilde{\tau}_n}(y)\to y$ as $n\to \infty$.  Together with \eqref{pseudoorder-equ-4}, this implies that
	\[
	\varphi_{\tilde{\tau}_n}(\varphi_{-\tilde{\epsilon}}(y))\ll_M \varphi_{\tilde{\tau}_n}(\varphi_{T_2+s}(\varphi_{-\tilde{\epsilon}}(y)))
	\]
	for all $s\in [-l_2,l_2]$ and $n$ sufficiently large. By letting $n\to +\infty$ again, we have that $\varphi_{-\tilde{\epsilon}}(y)<_M \varphi_{T_2+s}(\varphi_{-\tilde{\epsilon}}(y))$. Moreover, the strongly differential positivity of the system implies that $y\ll_M \varphi_{T_2+s}(y)$ for any $s\in [-l_2,l_2]$, which contradicts the definition of $T_2$. Thus, we obtain the claim.
	
	Secondly, for the case that there is some $T>0$ such that $\varphi_T(x)<_M x$, the proof follows the similar arguments. Thus, we obtain the proposition.
\end{proof}

\begin{prop}\label{notin}
	Let $x,z\in \mathcal{R}(\varphi)$. If $z\in \partial I^{+}(x)\cup \partial I^{-}(x)$, then $x\notin \omega(z)$.
\end{prop}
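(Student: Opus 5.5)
Throughout I assume $z\neq x$. If $z=x$ the conclusion fails trivially, since $x\in\omega(x)$ by recurrence, so the statement must intend $z\neq x$; it is used that way for points of $\mathcal{B}(\varphi)$ other than $x$ in Lemma \ref{endpoint}. I treat the case $z\in\partial I^{+}(x)$; the case $z\in\partial I^{-}(x)$ is symmetric, with all orders reversed.

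The argument is by contradiction: assume $x\in\omega(z)$. By Proposition \ref{quasi-closed-prop} (which uses (H2)), $z\in J^{+}(x)\setminus I^{+}(x)$, so $x<_M z$ and $z\notin I^{+}(x)$.

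\emph{Step 1: the orbit of $z$ is pseudo-ordered.} Fix $t_0>0$. Proposition \ref{strong flow open relation} gives neighborhoods $U$ of $x$ and $V$ of $z$ with $\varphi_s(U)\ll_M\varphi_s(V)$ for all $s\ge t_0$. Since $x\in\omega(z)$, there is $t_n\to\infty$ with $\varphi_{t_n}(z)\in U$. Also $z\in V$. Hence $\varphi_{s+t_n}(z)\ll_M\varphi_s(z)$ for $s\ge t_0$. Set $w=\varphi_{t_0}(z)$, which is recurrent because $z$ is. Then $\varphi_{t_n}(w)\ll_M w$, and $\varphi_{t_n}(w)\neq w$: otherwise $z$ would be periodic, and $x\in\omega(z)=O(z)$ would give $x\ll_M x$, which still has to be handled (see the last paragraph). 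Proposition \ref{pseudoorder-orbit} applied to $w$ then yields $y_1\le_M y_2$ and $y_2\le_M y_1$ for all $y_1,y_2\in\omega(w)=\omega(z)$.

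\emph{Step 2: upgrade to $x\ll_M z$.} By (H4) and recurrence, $\omega(z)$ is compact and invariant and contains both $z$ and $x$. Fix $t>0$ and put $a=\varphi_{-t}(x)$ and $b=\varphi_{-t}(z)$; both lie in $\omega(z)$. By Step 1, $a\le_M b$. Moreover $a\neq b$, because $x\neq z$ and $\varphi_{-t}$ is injective. Proposition \ref{monotone property}(i) then gives $x=\varphi_t(a)\ll_M\varphi_t(b)=z$. So $z\in I^{+}(x)$, which contradicts $z\in\partial I^{+}(x)$. Therefore $x\notin\omega(z)$.

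The main obstacle is Step 1, specifically checking the hypotheses of Proposition \ref{pseudoorder-orbit}. That proposition needs a strict relation $\varphi_T(w)<_M w$ at a recurrent point, and one must either rule out that $z$ is periodic or dispose of that case separately. If $z$ is periodic, its orbit is compact and invariant and contains $x$. I would treat this case directly: write $x=\varphi_\sigma(z)$ and use $x<_M z$ together with Proposition \ref{monotone property} to push the relation along the periodic orbit. The aim is to get $z\le_M x$ as well, and then rerun Step 2 with $a=\varphi_{-t}(x)$ and $b=\varphi_{-t}(z)$, which again yields $x\ll_M z$ and the same contradiction.
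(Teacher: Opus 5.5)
Your argument is essentially the paper's. Quasi-closedness gives $x<_M z$; openness of $\ll_M$ together with $x\in\omega(z)$ makes the orbit of $z$ pseudo-ordered; Proposition \ref{pseudoorder-orbit} then orders $\omega(z)$ in both directions; and pulling $x,z$ back by $\varphi_{-t}$ inside the invariant set $\omega(z)$ and applying strong positivity yields $x\ll_M z$, a contradiction. Your caveat that $z\neq x$ is needed is correct: the paper uses it tacitly and assumes it explicitly where the proposition is applied in Subsection \ref{s3.2}.

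The periodic subcase you leave as a sketch, which the paper's proof passes over silently, closes in one line without any detour through $z\le_M x$. First, $z$ cannot be an equilibrium, since then $\omega(z)=\{z\}$ would force $x=z$. Second, if $z$ has minimal period $p>0$, then $\varphi_p$ fixes both $x\in O(z)$ and $z$, so Proposition \ref{monotone property}(i) gives $x=\varphi_p(x)\ll_M\varphi_p(z)=z$ directly.
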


\begin{proof}
	Suppose $x\in \omega(z)$, then for any $t_0>0$, it has $\varphi_{t_0}(x)\in \omega(z)$. If $z\in \partial I^+(x)$ (the case $z\in \partial I^-(x)$ is similar), Proposition \ref{quasi-closed-prop} implies $\varphi_{t_0}(x)\ll_M \varphi_{t_0}(z)$ which entails that there is a neighborhood $U$ of $\varphi_{t_0}(x)$ such that $y\ll_M \varphi_{t_0}(z)$ for any $y\in U$. Since $\varphi_{t_0}(x)\in \omega(z)$, there is a $t_1>0$ such that $t_1\neq t_0$ and $\varphi_{t_1}(z)\in U$. And hence, $\varphi_{t_1}(z)\ll_M \varphi_{t_0}(z)$. Then Proposition \ref{pseudoorder-orbit} shows that $\omega(z)$ is ordered. Noticed $x\in \omega(z)$ and $z\in \omega(z)$, the invariant of the set $\omega(z)$ entails that there exists an $\epsilon>0$ such that $\varphi_{-\epsilon}(x)<_M \varphi_{-\epsilon}(z)$. Moreover, by strong monotonicity, it has $x\ll_M z$, i.e., $z\in I^{+}(x)$, a contradiction to $z\in \partial I^+(x)$.
\end{proof}

\begin{prop}\label{ou}
	Let $x\in \mathcal{R}(\varphi)$. Then $\omega(x)$ is either strongly ordered or unordered.
\end{prop}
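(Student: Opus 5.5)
We prove the dichotomy by showing that, unless $\omega(x)$ is unordered, it is strongly ordered. Since $x\in\mathcal{R}(\varphi)$, we have $\omega(x)=\overline{O^+(x)}$ and $\omega(x)$ is compact and invariant. Every $y\in\omega(x)$ is itself recurrent with $\omega(y)=\omega(x)$: $y$ lies in the closure of $O(x)$ and $x\in\omega(y)$ because $x$ is recurrent and $\omega$-limit sets are closed and invariant. So suppose two distinct points $y_1,y_2\in\omega(x)$ are ordered, say $y_1<_M y_2$. We aim to show that this forces $\omega(x)$ to be strongly ordered.

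\emph{Step 1: an order relation inside a single orbit.} By Proposition \ref{strong flow open relation}, there are neighborhoods $U$ of $y_1$ and $V$ of $y_2$ with $\varphi_t(U)\ll_M\varphi_t(V)$ for all $t\ge t_0$. Since $y_1,y_2\in\overline{O^+(x)}$, there are times $s_1,s_2>0$ with $\varphi_{s_1}(x)\in U$ and $\varphi_{s_2}(x)\in V$, and we may choose them with $s_1\neq s_2$. Then
\[
\varphi_{t_0+s_1}(x)\ll_M\varphi_{t_0+s_2}(x).
\]
Set $z=\varphi_{t_0+\min(s_1,s_2)}(x)\in\mathcal{R}(\varphi)$ and $T=|s_2-s_1|>0$. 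We get either $z\ll_M\varphi_T(z)$ or $\varphi_T(z)\ll_M z$. If $z=\varphi_T(z)$, the orbit is periodic and $z\ll_M z$. Otherwise the hypothesis of Proposition \ref{pseudoorder-orbit} holds. In either case we conclude that for all $w_1,w_2\in\omega(z)=\omega(x)$ both $w_1\le_M w_2$ and $w_2\le_M w_1$ hold; in the periodic case this follows by transitivity along the closed orbit, or by rerunning the argument of Proposition \ref{pseudoorder-orbit}.

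\emph{Step 2: upgrading to strong order.} Take distinct $w_1,w_2\in\omega(x)$. Since $\omega(x)$ is invariant, $w_i=\varphi_1(v_i)$ with $v_i\in\omega(x)$ distinct. Step 1 gives $v_1<_M v_2$, and Proposition \ref{monotone property}(i) then gives $w_1\ll_M w_2$. The same argument gives $w_2\ll_M w_1$. For $w_1=w_2=w$, apply the argument to $v=\varphi_{-1}(w)$ together with a different point $v'\in\omega(x)$ (if one exists) to get $w\ll_M\varphi_1(v')\ll_M w$; the singleton case is trivially strongly ordered under the pairwise convention. Hence $\omega(x)$ is strongly ordered. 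If no two distinct points of $\omega(x)$ are ordered, then $\omega(x)$ is unordered by definition, which completes the dichotomy.

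\emph{Main obstacle.} The hardest step is Step 1: transferring an order relation between two limit points to an order relation between two points of one orbit, with a positive time gap, so that Proposition \ref{pseudoorder-orbit} applies. We handle this with the open neighborhoods from Proposition \ref{strong flow open relation} together with recurrence, which places orbit points in both neighborhoods at distinct times. We must also take care with the degenerate periodic case, where $z=\varphi_T(z)$, which Proposition \ref{pseudoorder-orbit} does not literally cover.
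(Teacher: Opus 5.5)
Your proof is correct and follows the paper's route: you use openness of $\ll_M$ (via Proposition \ref{strong flow open relation}, where the paper instead pushes $a<_M b$ forward by SDP and then approximates by orbit points) to get two strongly ordered points on $O^+(x)$, invoke Proposition \ref{pseudoorder-orbit}, and then upgrade $\le_M$ to $\ll_M$ by writing $w_i=\varphi_1(v_i)$ with $v_i\in\omega(x)$ and applying Proposition \ref{monotone property}(i). Two minor remarks: your opening claim that every $y\in\omega(x)$ is recurrent with $\omega(y)=\omega(x)$ is false in general (the orbit closure of a recurrent point need not be minimal), though you never use it; and the degenerate periodic case disappears if you shrink $U,V$ to be disjoint, since then $\varphi_{s_1}(x)\neq\varphi_{s_2}(x)$ and hence $z\neq\varphi_T(z)$.
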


\begin{proof}
	If $\omega(x)$ is not unordered, then there are two points $a,b$ in $\omega(x)$ such that $a<_M b$. By strong monotonicity, there is $\tau>0$ such that $\varphi_{\tau}(a)\ll_M \varphi_{\tau}(b)$. Recall $a\in\omega(x)$ and $b\in\omega(x)$, there are two points $\varphi_{t_1}(x)$ and $\varphi_{t_2}(x)$ in $O^{+}(x)$ closed to $\varphi_{\tau}(a)$ and $\varphi_{\tau}(b)$ respectively such that $\varphi_{t_1}(x)\ll_M \varphi_{t_2}(x)$. Then Proposition \ref{pseudoorder-orbit} shows that $\omega(x)$ is ordered and for any two distinct points $y_1, y_2 \in \omega(x)$, both $y_1<_M y_2$ and $y_2<_M y_1$ hold. The invariant of the set $\omega(x)$ entails that $\varphi_{-t}(y_1), \varphi_{-t}(y_2)\in \omega(x)$ whenever $y_1, y_2 \in \omega(x)$ and $t>0$. Furthermore, both $\varphi_{-t}(y_1)<_M \varphi_{-t}(y_2)$ and $\varphi_{-t}(y_2)<_M \varphi_{-t}(y_1)$ hold. So, the strong monotonicity implies that both $y_1\ll_M y_2$ and $y_2\ll_M y_1$ hold for any $y_1, y_2 \in \omega(x)$. Therefore, $\omega(z)$ is strongly ordered, which completes the proof.
\end{proof}

    \subsection{A soft version of the intersection principle}\label{s3.2}	

    In this subsection, we are focus on the order relationship of recurrent points and present a soft version of the intersection principle, that is,

\begin{lem}\label{sep_n}
	Let $x\in \mathcal{R}(\varphi)$. Then the following two assertions hold:
    \begin{enumerate}[{\rm (i)}]
			\item If $x\in \partial I^{+}(x)\cup \partial I^{-}(x)$, then $\mathcal{R}(\varphi)\cap (\partial  I^{+}(x)\cup \partial  I^{-}(x))=\{x\}$;
            
            \item If $x\in I^{+}(x)\cup I^{-}(x)$, then $\mathcal{R}(\varphi)\cap (\partial  I^{+}(x)\cup \partial  I^{-}(x))=\emptyset$.
    \end{enumerate}
    
\end{lem}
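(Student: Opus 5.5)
The plan is to prove both assertions at once from a single claim: \emph{if $z\in\mathcal{R}(\varphi)\cap(\partial I^{+}(x)\cup\partial I^{-}(x))$, then $z=x$.}

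Granting the claim, the two assertions follow by bookkeeping with Proposition \ref{chronological boundary}. In case (i), $x\in\partial I^{+}(x)\cap\partial I^{-}(x)$, and $x$ is recurrent, so the intersection is exactly $\{x\}$. In case (ii), $x\in I^{+}(x)\cap I^{-}(x)$, so $x$ is not a boundary point, and the claim leaves the intersection empty. By the time-reversal symmetry between $I^{+}$ and $I^{-}$, I only treat $z\in\partial I^{+}(x)$ with $z\neq x$ and aim for a contradiction.

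\emph{Step 1.} By Proposition \ref{quasi-closed-prop}, $z\in J^{+}(x)\setminus I^{+}(x)$, so $x<_M z$ while $z\notin I^{+}(x)$. Proposition \ref{notin} gives $x\notin\omega(z)$, which rules out the degenerate situation where the two orbits coincide asymptotically. By Proposition \ref{strong flow open relation}, there are neighborhoods $U\ni x$ and $V\ni z$ and some $t_0>0$ with $\varphi_t(U)\ll_M\varphi_t(V)$ for all $t\ge t_0$.

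\emph{Step 2.} I will use the recurrence of $x$: choose $s_n\to\infty$ with $\varphi_{s_n}(x)\to x$. Since $\varphi_{s_n}(x)\ll_M\varphi_{s_n}(z)$, Proposition \ref{homogeneous-prop} gives
\[
\varphi_{s_n}(z)\in I^{+}(\varphi_{s_n}(x))=g_n I^{+}(x),\qquad g_n=g_x^{\varphi_{s_n}(x)},
\]
and Proposition \ref{distance-limit} lets me take $g_n\to e$. Passing to a subsequence, $\varphi_{s_n}(z)\to w\in\omega(z)$ and $g_n^{-1}\varphi_{s_n}(z)\to w$, so $w\in\overline{I^{+}(x)}$.

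\emph{Step 3 (the main obstacle).} I want to upgrade this to $w\in I^{+}(x)$. The first thing to try is to exploit the uniform margin from Step 1. Fix $t_0$ and put $y=\varphi_{t_0}(x)$. The set $\varphi_{t_0}(V')$, for a small compact neighborhood $V'\subset V$ of $z$, is a compact subset of $I^{+}(y)$. Inner and outer continuity (Proposition \ref{innercontinuous} together with (H3)) should then keep $\varphi_{s_n}(\cdot)$ of this compact piece inside $I^{+}$ of points near $\varphi_{s_n}(x)$, uniformly in $n$. If $w$ only lay on $\partial I^{+}(x)$, the outer continuity of $I^{+}$ at $x$ would be violated. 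If this margin argument fails, the fallback is to apply Proposition \ref{pseudoorder-orbit} to whichever recurrent orbit inherits a pseudo-order from the relation $x\le_M w$.

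\emph{Step 4.} Once $w\in I^{+}(x)$ is in hand, openness of $I^{+}(x)$ (Proposition \ref{openness}) gives a time $T$ with $x\ll_M\varphi_T(z)$. Combining this with $z\in\omega(z)$ and the limit $x\le_M\lim\varphi_{\tau}(z)$ along return times of $z$, I expect Proposition \ref{pseudoorder-orbit} to apply to $z$ and make $\omega(z)$ ordered in both directions. In particular $\varphi_{-\epsilon}(x)<_M\varphi_{-\epsilon}(z)$ would then be strengthened by strong differential positivity to $x\ll_M z$. This contradicts $z\notin I^{+}(x)$, exactly as at the end of the proof of Proposition \ref{notin}.

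The delicate part is Step 3, namely obtaining interior membership despite the two orbits being recurrent only separately and not jointly. This is where the homogeneity (via $g_n\to e$) and hypothesis (H3) are essential.
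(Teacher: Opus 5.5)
Your reduction to the single claim ($z\in\mathcal{R}(\varphi)\cap(\partial I^{+}(x)\cup\partial I^{-}(x))\Rightarrow z=x$) is fine. The argument for the claim, however, has a genuine gap, and it is not confined to Step 3.

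\textbf{Step 3.} Inner and outer continuity of $I^{+}$ (Proposition \ref{innercontinuous} and (H3)) only control a \emph{fixed} compact set when the base point is perturbed. They give no margin for the sets $\varphi_{s_n}(\varphi_{t_0}(V'))\subset I^{+}(\varphi_{s_n+t_0}(x))$. These are produced by the flow over unbounded times and may be squeezed against the chronological boundary as $n\to\infty$. Nor does $w\in\partial I^{+}(x)$ contradict outer continuity at $x$: a limit of points of $I^{+}(\varphi_{s_n}(x))$ with $\varphi_{s_n}(x)\to x$ may well lie on $\partial I^{+}(x)$.

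The paper gets a uniform margin from the time-one map only. Since $\varphi_{t-1}(W_x)\subset\overline{I^{+}_r(x_{t-1})}$, one has $g^{x}_{x_t}\varphi_t(W_x)\subset\overline{R(x_t)}=g^{x}_{x_t}\varphi_1(\overline{I^{+}_r(x_{t-1})})\subset I^{+}(x)\cup\{x\}$. This family depends only on the current base point $x_{t-1}$, which ranges over the compact set $\overline{O^{+}(x)}$. That yields a uniform distance $\delta$ to $\partial I^{+}(x)\cap A$, where the annulus $A\supset\omega(z)$ around $x$ exists because $x\notin\omega(z)$ (Proposition \ref{R(x_t)}). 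With such an estimate your conclusion $w\in I^{+}(x)$ would follow.

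\textbf{Step 4.} Even granting $w\in\omega(z)\cap I^{+}(x)$, this step does not close.
\begin{itemize}
\item The ending of Proposition \ref{notin} uses $x\in\omega(z)$, which you excluded in Step 1.
\item Proposition \ref{pseudoorder-orbit} requires $z<_M\varphi_T(z)$ or $\varphi_T(z)<_M z$. This does not follow from $x\ll_M\varphi_T(z)$ and $x<_M z$. For instance, $\omega(z)$ strongly ordered with $z\ll_M w$ is compatible with every relation you have.
\item The absorbing property $\omega(z)\cap I^{+}(x)\neq\emptyset\Rightarrow\omega(z)\subset I^{+}(x)$ would finish the argument. But that is Corollary \ref{absorb}, which the paper derives from this very lemma, so using it would be circular.
\end{itemize}

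\textbf{The missing idea.} You need a way around the lack of joint recurrence you correctly identified. The paper fixes a \emph{single} return time $t_0>1$ of $x$ with $g^{x}_{x_{t_0}}$ close to $e$. It then shows that $W_x=\omega(z)\cap\overline{I^{+}(x)}$ satisfies $\varphi_{t_0}(W_x)\subset W_x$ and ${\rm \underline{dist}}(\varphi_{t_0}(W_x),\partial I^{+}(x))>\eta$ (Proposition \ref{W_{x}}). Iterating, and using $z\in W_x\cap\partial I^{+}(x)$, gives $d(\varphi_s(z),z)>\eta/2$ for all $s\in\mathcal{T}(t_0,\varepsilon_0)=\{nt_0+t:n\in\mathbb{N},|t|<\varepsilon_0\}$. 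This contradicts the fact that the return times of the recurrent point $z$ contain an IP-set, since every IP-set meets $\mathcal{T}(t_0,\varepsilon_0)$.

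\textbf{Minor point.} The reduction from $\partial I^{-}$ to $\partial I^{+}$ should go through $C_M\mapsto -C_M$, not through time reversal, since $\varphi_{-t}$ is not strongly differentially positive.
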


%In the following, we just prove that $\mathcal{R}(\varphi)\cap \partial I^{+}(x)=\{x\}$ if $x\in \partial I^{+}(x)\cup \partial I^{-}(x)$, and $\mathcal{R}(\varphi)\cap \partial  I^{+}(x)=\emptyset$ if $x\in I^{+}(x)\cup I^{-}(x)$, as the remainder can be obtained by a similar argument.

Before proving the Lemma \ref{sep_n}, we label some useful notion to express clearly. In the rest part of this subsection, let $x, z\in \mathcal{R}(\varphi)$ with $x\neq z$ and $z\in \partial  I^{+}(x)$. For each $y\in M$, we label $$W_{y}=\omega(z)\cap  \overline{I^{+}(y)} ,$$ and for $x\in \mathcal{R}(\varphi)$, $$x_{t}\triangleq \varphi_{t}(x),\ \ \ W_{x}^{t}\triangleq g_{x_{t}}^{x}\varphi_{t}(W_{x}),$$
where $g_{x_{t}}^{x}\in G$ is the group element that maps $x_{t}$ to $x$.
Clearly, $z\in W_x$. Note that $x\notin\omega(z)$ by Proposition \ref{notin}, one can find $\eta_2>\eta_1>0$ and a closed annulus domain $A=\{v\in M:\eta_1\le d(v,x)\le \eta_2\}$ such that $\omega(z)\subset {\rm Int}A$. We label $\partial  I^{+}(x)$ restricted to $A$ by $\partial I^{+}_{A}(x)$, i.e., $$\partial  I^{+}_{A}(x)=\partial I^{+}(x)\cap A.$$ Clearly, $\partial  I^{+}_{A}(x)\neq\emptyset$ as $z\in \partial I^{+}(x)$ and $z\in\omega(z)(\subset {\rm Int}A)$. Moreover, whether $x\in \partial I^{+}(x)\cup \partial I^{-}(x)$ or $x\in I^{+}(x)\cup I^{-}(x)$, it has $x\notin\partial  I^{+}_{A}(x)$.

To be more precise, some notion about the distance between two sets are necessary. For any two subsets $A,B\subset M$, the \textit{Hausdorff distance} between $A$ and $B$ is defined as $$d_{\mathcal{H}}(A,B)=\max\{\sup_{a\in A} d(a,B),\sup_{b\in B} d(b,A)\},$$
where $d(a,B)=\inf_{b\in B}d(a,b)$ and $d(b,A)=\inf_{a\in A}d(a,b)$.
While, the \textit{separation index} between $A$ and $B$ is defined as $${\rm \underline{dist}}(A,B)=\inf_{x\in A,y\in B} d(x,y).$$
Clearly, ${\rm\underline{dist}}(A,B)>0$ if and only if $A\cap B=\emptyset$ when $A,B$ are compact. In particular, $d(x,B)={\rm \underline{dist}}(\{x\},B)$.
Some useful relationships between $d_{\mathcal{H}}$ and ${\rm\underline{dist}}$ are listed below:
\begin{align}
	&{\rm\underline{dist}}(A,B)\leq{\rm\underline{dist}}(A,C)+d_{\mathcal{H}}(B,C),\label{dis_2}
\end{align}
	where $A,B,C$ are closed sets in $ M$. 

\begin{prop}\label{R(x_t)}
    There exists $\delta>0$ such that for any $t>1$, 
    \begin{equation}\label{w}
        {\rm\underline{dist}}(W_{x}^{t},\partial  I^{+}_{A}(x))\geq\delta.
    \end{equation} 
\end{prop}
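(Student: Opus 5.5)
The plan is to argue by contradiction with a compactness argument. The key qualitative fact is that $W_x^t\subset I^{+}(x)$ for every $t>0$; the uniform bound in \eqref{w} is then extracted from limits of sequences, using the quasi-closedness (H2) and the homogeneity of $I^{+}$.

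\emph{Step 1: qualitative inclusion.} Let $w\in W_x=\omega(z)\cap\overline{I^{+}(x)}$. If $w\in I^{+}(x)$, then $x\ll_M w$. Otherwise $w\in\partial I^{+}(x)$, and Proposition \ref{quasi-closed-prop} gives $x\leq_M w$. In both cases $w\neq x$, because $x\notin\omega(z)$ by Proposition \ref{notin}. Hence $x<_M w$, and Proposition \ref{monotone property} gives $x_t\ll_M\varphi_t(w)$ for $t>0$. By Proposition \ref{homogeneous-prop}, $g_{x_t}^{x}\varphi_t(w)\in g_{x_t}^{x}I^{+}(x_t)=I^{+}(x)$. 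So $W_x^t\subset I^{+}(x)$, which is disjoint from $\partial I^{+}_A(x)$.

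\emph{Step 2: setting up the contradiction.} Suppose no such $\delta$ exists. Then there are $t_n>1$, $w_n\in W_x$ and $q_n\in\partial I^{+}_A(x)$ with $d\big(g_{x_{t_n}}^{x}\varphi_{t_n}(w_n),q_n\big)\to 0$. We pass to subsequences using the compactness of orbit closures (H4), the compactness of $\omega(z)$, and the compactness of $\partial I^{+}_A(x)$ (it is closed and contained in the bounded annulus $A$; I would check that bounded closed sets are compact here, or alternatively use that the points lie near $g\,\omega(z)$-type compact sets). This gives
\[
x_{t_n-1}\to a,\quad \varphi_{t_n-1}(w_n)\to b\in\omega(z),\quad x_{t_n}\to x^*=\varphi_1(a),\quad \varphi_{t_n}(w_n)\to p=\varphi_1(b).
\]
By Proposition \ref{distance-limit}(iii) with $y_0=x$, the elements $g_{x_{t_n}}^{x}$ can be chosen so that they converge to some $g^*$ with $g^*x^*=x$. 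Hence $q_n\to q=g^*p$, and $q\in\partial I^{+}(x)\cap A$ because both sets are closed.

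\emph{Step 3: contradiction in both cases.} From Step 1, $x_{t_n-1}\ll_M\varphi_{t_n-1}(w_n)$, since $t_n-1>0$. Quasi-closedness (H2) then gives $a\leq_M b$.
\begin{itemize}
\item If $a\neq b$, Proposition \ref{monotone property} gives $x^*=\varphi_1(a)\ll_M\varphi_1(b)=p$. Thus $p\in I^{+}(x^*)$, and Proposition \ref{homogeneous-prop} gives $q=g^*p\in I^{+}(x)$. This contradicts $q\in\partial I^{+}(x)=\overline{I^{+}(x)}\setminus I^{+}(x)$.
\item If $a=b$, then $p=x^*$ and $q=g^*x^*=x$. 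This contradicts $q\in A$, since $d(q,x)\geq\eta_1>0$.
\end{itemize}
This is exactly why the statement requires $t>1$: shifting back by one unit of time keeps a strict gap for applying strong differential positivity in the limit.

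The main obstacle is Step 2. The delicate points are the convergence of the group elements $g_{x_{t_n}}^{x}$, which needs a consistent choice via Proposition \ref{distance-limit}(iii), and making sure the limit $q$ stays in $\partial I^{+}_A(x)$. The degenerate case $a=b$, where the limit order relation is not strict, is handled only because the annulus $A$ keeps $q$ away from $x$.
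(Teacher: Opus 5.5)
Your proof is correct, and it takes a genuinely different route from the paper's.

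\textbf{The paper's route.} It first traps $W_x^t$ inside $\overline{R(x_t)}=g^{x}_{x_t}\varphi_1\big(\overline{I^{+}_{r}(x_{t-1})}\big)$. It then shows $\overline{R(x_t)}\subset I^{+}(x)\cup\{x\}$, so this set misses $\partial I^{+}_{A}(x)$ for each fixed $t>1$. Next it proves that $t\mapsto{\rm\underline{dist}}(\overline{R(x_t)},\partial I^{+}_{A}(x))$ is continuous. This step uses (H3), to get Hausdorff continuity of $I^{+}_{r}(x_{t-1})$ in $t$, together with Proposition \ref{distance-limit}(iii). Finally, it appeals to compactness of $\overline{O^{+}(x)}$ to turn pointwise positivity into a uniform $\delta$.

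\textbf{Your route.} You run a single sequential-compactness argument. The relation $x_{t_n-1}\ll_M\varphi_{t_n-1}(w_n)$ passes to the limit through (H2). Strong differential positivity over the last unit of time, plus homogeneity, then puts the limit point in $I^{+}(x)$. The annulus $A$ disposes of the degenerate case $a=b$.

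\textbf{What each route buys.} Your argument never uses (H3) or any Hausdorff continuity of chronological sets. It also handles limit points $a\in\overline{O^{+}(x)}$ that need not lie on the orbit explicitly; the paper's final compactness sentence leaves this implicit. The paper's route, in exchange, gives the extra structural fact that the separation index of the larger sets $\overline{R(x_t)}$ varies continuously in $t$. Your explanation of why $t>1$ is needed is right. As $t\to 0$ the bound genuinely fails, since $z\in W_x\cap\partial I^{+}_{A}(x)$.

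\textbf{Two points to make explicit.}
\begin{enumerate}
\item You do not need compactness of $\partial I^{+}_{A}(x)$. The convergence $q_n\to g^{*}p$ follows directly from $d\big(g^{x}_{x_{t_n}}\varphi_{t_n}(w_n),q_n\big)\to 0$. The limit lies in $\partial I^{+}_{A}(x)$ because $A$ is closed and $\partial I^{+}(x)=\overline{I^{+}(x)}\cap(M\setminus I^{+}(x))$ is closed, as $I^{+}(x)$ is open.
\item $W_x^t$ depends on the choice of $g^{x}_{x_t}$, so you should say why re-choosing it to converge is harmless. Any two choices differ by some $k\in G_x$, and $\lambda_k$ is an isometry of $(M,d)$ because the metric is $G$-invariant. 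It fixes $x$ and preserves $I^{+}(x)$ by Proposition \ref{homogeneous-prop}. Hence it preserves $A$ and $\partial I^{+}_{A}(x)$. So replacing $q_n$ by $k_nq_n$ keeps the distances tending to $0$, and the separation index does not depend on the choice.
\end{enumerate}
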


\begin{proof}
    For any $x_t$, label $$B_{r}(x_t)=\{v: d(v,x_t)< r\}\text{ and }I^{+}_{r}(x_t)=I^{+}(x_t)\cap B_{r}(x_t).$$
    Choose $r>0$ large enough so that $\omega(z)\subset \cap_{t\geq0} B_{r}(x_t)$. Thus, $\omega(z)\subset B_{r}(x_t)$ for any $x_t$. Recalling $W_{x_t}=\omega(z)\cap\overline{I^{+}(x_t)}$, it has $W_{x_t}\subset B_{r}(x_t)\cap\overline{I^{+}(x_t)}$, i.e., $W_{x_t}\subset \overline{I^{+}_{r}(x_t)}$, for any $t>0$. For any $t>1$, label $$R(x_t)=g_{x_{t}}^{x}\varphi_1 (I^{+}_{r}(x_{t-1})).$$ Then, $\overline{R(x_t)}=g_{x_{t}}^{x}\varphi_1 (\overline{I^{+}_{r}(x_{t-1})})$ for any $t>1$. Moreover, it has $W_x^t\subset \overline{R(x_t)}$ as 
	$\varphi_{t} (W_x)\subset \varphi_1 (W_{x_{t-1}})\subset \varphi_1 (\overline{I^{+}_{r}(x_{t-1})})$ for any $t>1$. This implies 
    \begin{equation*}\label{d>d}
		{\rm \underline{dist}}(W_{x}^{t},\partial  I^{+}_{A}(x))\geq {\rm\underline{dist}}(\overline{R(x_t)}, \partial  I^{+}_{A}(x)) \text{ for any } t>1.
	\end{equation*}
    Thus to prove \eqref{w}, it suffices to prove that there exists $\delta>0$ such that 
    \begin{equation}\label{>delta}
        {\rm\underline{dist}}(\overline{R(x_t)},\partial  I^{+}_{A}(x))>\delta \text{ for any }t>1.
    \end{equation}

    As a first step, we claim that 
    \begin{equation}\label{>0}
        {\rm\underline{dist}}(\overline{R(x_t)},\partial  I^{+}_{A}(x))>0 \text{ for any }t>1.
    \end{equation}
    In order to obtain the claim, we only need to prove $\overline{R(x_t)}\cap\partial  I^{+}_{A}(x)=\emptyset$ since it implies \eqref{>0} immediately by the definition of $\underline{dist}$ and the compactness of $\overline{R(x_t)}$ and $\partial  I^{+}_{A}(x)$. Now we prove $\overline{R(x_t)}\cap\partial  I^{+}_{A}(x)=\emptyset$. By Proposition \ref{quasi-closed-prop}, it has $x_{t}<_M y$ for any $y\in \overline{I^{+}(x_{t})}\backslash\{x_{t}\}$ and any $t>0$. Then it follows from Proposition \ref{monotone property}(i) that $\varphi_1  (\overline{I^{+}(x_{t-1})}\backslash\{x_{t-1}\})\subset  I^{+}(x_{t})$ for any $t>1$. Since $I^{+}_{r}(x_{t-1})\subset I^{+}(x_{t-1})$ and $\varphi_1(x_{t-1})=x_{t}$, it has $\varphi_1(\overline{I^{+}_{r}(x_{t-1})})\subset  I^{+}(x_{t})\cup \{x_{t}\}$, and hence, $\overline{R(x_t)}\subset g_{x_{t}}^{x}(I^{+}(x_{t})\cup \{x_{t}\})$ for any $t>1$. So, by (H1) (or rather, Proposition \ref{homogeneous-prop}), it has  $\overline{R(x_t)}\subset I^{+}(x)\cup \{x\}$ for any $t>1$. Since $\partial  I^{+}_{A}(x)\subset\partial  I^{+}(x)\backslash\{x\}$ and $( I^{+}(x)\cup\{x\})\cap (\partial  I^{+}(x)\backslash\{x\})=\emptyset$, it has $\overline{R(x_t)}\cap\partial  I^{+}_{A}(x)=\emptyset$.

    As a second step, we show that for any $t>1$, ${\rm\underline{dist}}(\overline{R(x_t)},\partial  I^{+}_{A}(x))$ is continuous with respect to $t$, or equally, ${\rm\underline{dist}}(g_{x_{t}}^{x}\varphi_1 (I^{+}_{r}(x_{t-1})), \partial  I^{+}_{A}(x))$ is continuous with respect to $t$. Fix $t_0>1$. By \eqref{dis_2}, it has 
    \begin{align}\label{t0t}
    &\lim_{t\rightarrow t_{0}}|{\rm\underline{dist}}(g_{x_{t}}^{x}\varphi_1 (I^{+}_{r}(x_{t-1})), \partial  I^{+}_{A}(x))-{\rm\underline{dist}}(g_{x_{t_{0}}}^{x}\varphi_1 (I^{+}_{r}(x_{t_{0}-1})), \partial  I^{+}_{A}(x))|\notag\\
    \leq &\lim_{t\rightarrow t_{0}}d_{\mathcal{H}}(g_{x_{t}}^{x}\varphi_1 (I^{+}_{r}(x_{t-1})),g_{x_{t_{0}}}^{x}\varphi_1 (I^{+}_{r}(x_{t_{0}-1})))\notag\\
    \leq &\underbrace{\lim_{t\rightarrow t_{0}}d_{\mathcal{H}}(g_{x_{t}}^{x}\varphi_1 (I^{+}_{r}(x_{t-1})),g_{x_{t_{0}}}^{x}\varphi_1 (I^{+}_{r}(x_{t-1})))}_{\text{I}}+\underbrace{\lim_{t\rightarrow t_{0}}d_{\mathcal{H}}(g_{x_{t_{0}}}^{x}\varphi_1 (I^{+}_{r}(x_{t-1})),g_{x_{t_{0}}}^{x}\varphi_1 (I^{+}_{r}(x_{t_{0}-1})))}_{\text{II}}.
    \end{align}
    
    First, we show the part II of \eqref{t0t} is $0$. By (H3) (i.e., the continuity of $I^{+}$) and the compactness of $\overline{I^{+}_{r}(x_{t})}$ for any $t>1$, it follows from \cite[Corollary 5.21]{R-09} that
    \begin{equation}\label{tt0}
        \lim_{t\rightarrow t_{0}}d_{\mathcal{H}}(I^{+}_{r}(x_{t-1}),I^{+}_{r}(x_{t_{0}-1}))=0.
    \end{equation}
    Thus by the continuity of the system and the group action, we obtain 
    \begin{equation}\label{b}
        \lim_{t\rightarrow t_{0}}d_{\mathcal{H}}(g_{x_{t_{0}}}^{x}\varphi_1 (I^{+}_{r}(x_{t-1})),g_{x_{t_{0}}}^{x}\varphi_1 (I^{+}_{r}(x_{t_{0}-1})))=0,
    \end{equation}
    that is, the part II of \eqref{t0t} is $0$.
    
    Next, we show the part I of \eqref{t0t} is $0$. For any $t>1$, since $d(g_{x_{t}}^{x}a,g_{x_{t_{0}}}^{x}\varphi_1 (I^{+}_{r}(x_{t-1})))=\inf_{b\in \varphi_1 (I^{+}_{r}(x_{t-1}))} d(g_{x_{t}}^{x}a,g_{x_{t_{0}}}^{x}b)$ for any $a\in \varphi_1(I^{+}_{r}(x_{t-1}))$, one has 
    $d(g_{x_{t}}^{x}a,g_{x_{t_{0}}}^{x}\varphi_1 (I^{+}_{r}(x_{t-1})))\leq d(g_{x_{t}}^{x}a,g_{x_{t_{0}}}^{x}a)$ for any $a\in \varphi_1(I^{+}_{r}(x_{t-1}))$. Similarly, for $t>1$, it has 
    $d(g_{x_{t_{0}}}^{x}a,g_{x_{t}}^{x}\varphi_1 (I^{+}_{r}(x_{t-1})))\leq d(g_{x_{t_{0}}}^{x}a,g_{x_{t}}^{x}a)$ for any $a\in \varphi_1(I^{+}_{r}(x_{t-1}))$. For any $t>1$, choose $a_t\in \varphi_1(\overline{I^{+}_{r}(x_{t-1})})$ such that $\sup_{a\in\varphi_1 (I^{+}_{r}(x_{t-1}))}d(g_{x_{t_{0}}}^{x}a,g_{x_{t}}^{x}a)\leq d(g_{x_{t_{0}}}^{x}a_t,g_{x_{t}}^{x}a_t)$. Then for $t>1$, it has
    \begin{equation}\label{at}
    d_{\mathcal{H}}(g_{x_{t}}^{x}\varphi_1 (I^{+}_{r}(x_{t-1})),g_{x_{t_{0}}}^{x}\varphi_1 (I^{+}_{r}(x_{t-1})))\leq d(g_{x_{t}}^{x}a_t,g_{x_{t_{0}}}^{x}a_t).
    \end{equation} 
    By the compactness of $\overline{I^{+}_{r}(x_{t_{0-1}})}$, there exists $\kappa>0$ such that $B_{\kappa}(\overline{I^{+}_{r}(x_{t_0-1})})$ is compact, where $B_{\kappa}(\overline{I^{+}_{r}(x_{t_0-1})})\triangleq\{v: d(v,\overline{I^{+}_{r}(x_{t_0-1})})\leq \kappa\}$. 
    Noting $\lim_{t\rightarrow t_{0}}d_G(g_{x_{t}}^{x},g_{x_{t_{0}}}^{x})=0$ by Proposition \ref{distance-limit}(iii), it has $\lim_{t\rightarrow t_{0}}d(g_{x_{t}}^{x}a,g_{x_{t_{0}}}^{x}a)=0$ uniformly for any $a\in B_{\kappa}(\overline{I^{+}_{r}(x_{t_0-1})})$. Together with \eqref{tt0} and fact that $a_t\in \varphi_1(\overline{I^{+}_{r}(x_{t-1})})$, we obtain $\lim_{t\rightarrow t_{0}}d(g_{x_{t}}^{x}a_t,g_{x_{t_{0}}}^{x}a_t)=0$, and hence, with \eqref{at}, we have
    \begin{equation}\label{a}
        \lim_{t\rightarrow t_{0}}d_{\mathcal{H}}(g_{x_{t}}^{x}\varphi_1 (I^{+}_{r}(x_{t-1})),g_{x_{t_{0}}}^{x}\varphi_1 (I^{+}_{r}(x_{t-1})))=0.
    \end{equation}
    Thus, with \eqref{b} and \eqref{a}, we calculate \eqref{t0t} and obtain
    \begin{equation*}
    \lim_{t\rightarrow t_{0}}|{\rm\underline{dist}}(g_{x_{t}}^{x}\varphi_1 (I^{+}_{r}(x_{t-1})), \partial  I^{+}_{A}(x))-{\rm\underline{dist}}(g_{x_{t_{0}}}^{x}\varphi_1 (I^{+}_{r}(x_{t_{0}-1})), \partial  I^{+}_{A}(x))|=0.
    \end{equation*}
    This implies the continuity of ${\rm\underline{dist}}(g_{x_{t}}^{x}\varphi_1 (I^{+}_{r}(x_{t-1})), \partial  I^{+}_{A}(x))$ (or ${\rm\underline{dist}}(\overline{R(x_t)},\partial  I^{+}_{A}(x))$) with respect to $t$.

    Finally, we prove \eqref{>delta}. Note the continuity of ${\rm\underline{dist}}(\overline{R(x_t)},\partial  I^{+}_{A}(x))$ with respect to $t$ is same as the continuity of ${\rm\underline{dist}}(\overline{R(x_t)},\partial  I^{+}_{A}(x))$ with respect to $x_t$. Together with the compactness of $\overline{O^{+}(x)}$, it can enhance the non-negativity in \eqref{>0} to be positive, which is \eqref{>delta}. This completes the proof.
\end{proof}

\begin{prop}\label{W_{x}}
    There are some $\eta>0$ and $t_0>1$ such that ${\rm \underline{dist}}(\varphi_{t_{0}}(W_{x}),\partial  I^{+}(x))>\eta$ and $\varphi_{t_{0}}(W_{x})\subset W_{x}$.
\end{prop}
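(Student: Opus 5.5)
The plan is to take $t_0$ to be a large return time of the recurrent point $x$, so that $x_{t_0}=\varphi_{t_0}(x)$ is very close to $x$. Then the group element $g:=g_{x_{t_0}}^{x}$ is close to $e$, and $\varphi_{t_0}(W_x)=g^{-1}W_x^{t_0}$ is a small perturbation of $W_x^{t_0}$. Proposition \ref{R(x_t)} then transfers its uniform separation $\delta$ from $W_x^{t_0}$ to $\varphi_{t_0}(W_x)$.

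\emph{Step 1 (choosing $t_0$).} Set $\rho:={\rm\underline{dist}}(\omega(z),M\setminus{\rm Int}A)>0$, which is positive because $\omega(z)$ is compact and contained in ${\rm Int}A$. Fix $\varepsilon<\min\{\delta,\rho,\eta_1\}/4$, where $\delta$ is the constant from Proposition \ref{R(x_t)}. Since $x\in\omega(x)$, we can choose $t_0>1$ with $d(x_{t_0},x)$ as small as we like. By Proposition \ref{distance-limit}(ii), $d_G(g,e)$ is then small. Because $\omega(z)$ is compact and the action $\theta$ is continuous, we may also choose a path $s\mapsto g_s$ in $G$ from $e$ to $g$ that stays inside a small $d_G$-ball around $e$. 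Shrinking that ball if necessary, we get $d(g_s v,v)<\varepsilon$ for all $v\in\omega(z)$ and all $s$, and also $d(x_{t_0},x)<\eta_1$.

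\emph{Step 2 (separation).} Let $K:=\varphi_{t_0}(W_x)$. By invariance of $\omega(z)$ we have $K\subset\omega(z)$, and $gK=W_x^{t_0}$. Take any point $v=g_s k$ with $k\in K$. Then $d(v,gk)<2\varepsilon$, and by \eqref{dis_2} together with Proposition \ref{R(x_t)}, $d(v,\partial I^+_A(x))\ge\delta-2\varepsilon$. On the other hand, $d(v,k)<\varepsilon$ and $k\in\omega(z)$, so $v$ lies at distance more than $\rho-\varepsilon$ from $M\setminus{\rm Int}A$. This also gives $d(v,\partial I^+(x)\setminus A)>\rho-\varepsilon$. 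Combining the two bounds,
\[
d\bigl(v,\partial I^+(x)\bigr)\ge\eta:=\min\{\delta,\rho\}/2 \quad\text{for all such } v.
\]
Taking $s=0$ gives ${\rm\underline{dist}}(\varphi_{t_0}(W_x),\partial I^+(x))\ge\eta$; replace $\eta$ by $\eta/2$ if a strict inequality is wanted.

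\emph{Step 3 (invariance $K\subset W_x$).} Since $K\subset\omega(z)$ already, it remains to show $K\subset\overline{I^+(x)}$; we will in fact get $K\subset I^+(x)$. First, as in the proof of Proposition \ref{R(x_t)}, Proposition \ref{quasi-closed-prop} and Proposition \ref{monotone property}(i) give $\varphi_{t_0}(\overline{I^+(x)})\subset I^+(x_{t_0})\cup\{x_{t_0}\}$. Next, $x_{t_0}\notin\omega(z)$: indeed $d(x_{t_0},x)<\eta_1$, while $\omega(z)\subset A$ keeps every point of $\omega(z)$ at distance at least $\eta_1$ from $x$. Hence $K\subset I^+(x_{t_0})$, and Proposition \ref{homogeneous-prop} yields $W_x^{t_0}=gK\subset I^+(x)$.

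Now fix $k\in K$ and consider the curve $s\mapsto g_sk$, running from $k$ to $gk\in I^+(x)$. By Step 2 this curve never meets $\partial I^+(x)$. Because $I^+(x)$ is open (Proposition \ref{openness}), its topological boundary is exactly $\overline{I^+(x)}\setminus I^+(x)=\partial I^+(x)$. A connected curve that starts in $I^+(x)$ and never meets that boundary stays in $I^+(x)$, so $k\in I^+(x)$. Therefore $K\subset\omega(z)\cap\overline{I^+(x)}=W_x$.

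The main obstacle is Step 3. Inner continuity (Proposition \ref{innercontinuous}) only gives a neighborhood of $x_{t_0}$ that depends on $x_{t_0}$, so it cannot be used directly to pass from $K\subset I^+(x_{t_0})$ to $K\subset I^+(x)$. The connectedness argument along the path $g_s$ avoids this, but it relies on the uniform estimate of Step 2 holding along the whole path $g_sK$ and not only at its endpoints. Getting this requires the uniform closeness $d(g_sv,v)<\varepsilon$ on the compact set $\omega(z)$.
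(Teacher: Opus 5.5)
Your proof is correct. Steps 1--2 are essentially the paper's separation argument: you take a return time $t_0$ so that $g=g^{x}_{x_{t_0}}$ is close to $e$, and you use compactness of $\omega(z)$ to make $d(ga,a)$ uniformly small on $\omega(z)\supset\varphi_{t_0}(W_x)$. You then transfer the bound $\delta$ of Proposition \ref{R(x_t)} from $W^{t_0}_x=g\varphi_{t_0}(W_x)$ back to $\varphi_{t_0}(W_x)$, and control the part of $\partial I^+(x)$ outside $A$ through the annulus.

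The invariance step is where you genuinely diverge. The paper argues by contradiction. A point $y\in\varphi_{t_0}(W_x)\setminus\overline{I^+(x)}$ would satisfy $d(y,\overline{I^+(x)})=d(y,\partial I^+(x))>\eta$. On the other hand, $W^{t_0}_x\subset\overline{I^+(x)}$ and $d_{\mathcal H}(\varphi_{t_0}(W_x),W^{t_0}_x)\le\eta$ force $d(y,W^{t_0}_x)\le\eta$. You instead join each $k$ to $gk\in I^+(x)$ along $s\mapsto g_sk$. You prove the separation from $\partial I^+(x)$ uniformly along the whole curve, and conclude by connectedness that the curve never leaves the open set $I^+(x)$.

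The paper's route is shorter and needs the separation only on $\varphi_{t_0}(W_x)$ itself. However, it uses without comment that the Riemannian distance is a length metric, so that from outside a closed set the distance to the set equals the distance to its boundary. Your route replaces this with path-connectedness of small $d_G$-balls, at the cost of the extra uniformity along $g_s\varphi_{t_0}(W_x)$. In exchange it yields the stronger inclusion $\varphi_{t_0}(W_x)\subset I^+(x)$. That works because of your observation that $x_{t_0}\notin\omega(z)$, which upgrades $W^{t_0}_x\subset\overline{I^+(x)}$ to $W^{t_0}_x\subset I^+(x)$.
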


\begin{proof}
    Recalling $\omega(z)\subset\text{Int}A$, it follows that $\omega(z)\cap (\partial  I^{+}(x)\backslash\text{Int}A)=\emptyset$, which implies that ${\rm \underline{dist}}(\omega(z),\partial  I^{+}(x)\backslash\text{Int}A)>0$. Choose an $\eta>0$ small that $$\eta<\min\left(\frac{\delta}{2}, {\rm \underline{dist}}(\omega(z),\partial  I^{+}(x)\backslash\text{Int}A)\right),$$
    where $\delta$ is given in Proposition \ref{R(x_t)}. Since $\lim_{x_t\rightarrow x}d_{G}(g^{x}_{x_t},e)=0$ by the Proposition \ref{distance-limit}(ii), it has $\lim_{x_t\rightarrow x}d(a,g^{x}_{x_t}a)=0$ for any $a\in \omega(z)$. Thus, by the compactness of $\omega(z)$ and the fact $x\in \mathcal{R}(\varphi)$, there exists $t_0>1$ such that $d(a,g^{x}_{x_{t_0}}a)<\eta$ for any $a\in \omega(z)$. Noting $W_{x}^{t_0}=g^{x}_{x_{t_0}}\varphi_{t_{0}}(W_{x})$, there exists $a_0\in \varphi_{t_{0}}(W_{x})$ such that $d_{\mathcal{H}}(\varphi_{t_{0}}(W_{x}),W_{x}^{t_0})\le d(a_0,g^{x}_{x_{t_0}}a_0)$. Recalling $\varphi_{t_{0}}(W_{x})\subset \omega(z)$ which means $a_0\in \omega(z)$, it has 
    \begin{equation}\label{le}
        d_{\mathcal{H}}(\varphi_{t_{0}}(W_{x}),W_{x}^{t_0})\le \eta.
    \end{equation}
    Consequently, together with inequality (\ref{dis_2}) and Proposition \ref{R(x_t)}, we obtain that
		\begin{align}
			{\rm \underline{dist}}\left(\varphi_{t_{0}}(W_{x}), \partial  I^{+}_{A}(x)\right)
			&\geq {\rm \underline{dist}}\left(W_{x}^{t_{0}}, \partial  I^{+}_{A}(x)\right)-d_{\mathcal{H}}(\varphi_{t_{0}}(W_{x}),W_{x}^{t_{0}})>\delta-\eta>\eta. \label{E:W-eta-L}
		\end{align}
		Note also that $\eta<{\rm \underline{dist}}(\omega(z),\partial  I^{+}(x)\backslash{\rm Int}A))$ and $\varphi_{t_{0}}(W_{x})\subset\omega(z)$, we further obtain $${\rm \underline{dist}}(\varphi_{t_{0}}(W_{x}),\partial  I^{+}(x)\backslash{\rm Int}A))>\eta.$$ Together with \eqref{E:W-eta-L}, this entails that
		\begin{equation}\label{E:W-boundary}
			{\rm \underline{dist}}(\varphi_{t_{0}}(W_{x}),\partial  I^{+}(x))>\eta.
		\end{equation}

        As for $\varphi_{t_{0}}(W_{x})\subset W_{x}$, by the invariance of $\omega(z)$ and the definition of $W_{x}$ (i.e., $W_x= \overline{I^{+}(x)}\cap\omega(z)$), it suffices to prove that $\varphi_{t_{0}}(W_{x})\subset  \overline{I^{+}(x)}$. We prove it by contradiction. Suppose that $\varphi_{t_{0}}(W_{x})\not\subset  \overline{I^{+}(x)}$, that is, there is $y\in \varphi_{t_{0}}(W_{x})\cap (M\backslash  \overline{I^{+}(x)})$.
		It follows from $y\in \varphi_{t_{0}}(W_{x})$ and \eqref{E:W-boundary} that ${\rm \underline{dist}}(\{y\},\partial  I^{+}(x))>\eta$. Moreover, it has ${\rm \underline{dist}}(\{y\}, \overline{I^{+}(x)})>\eta$, since $d(y,\partial  I^{+}(x))=d(y,\overline{I^{+}(x)})$ deduced form $y\notin \overline{I^{+}(x)}$. Together with Proposition \ref{quasi-closed-prop} and the strong positivity of the system, Proposition \ref{homogeneous-prop} entails $W_{x}^{t_{0}}\subset \overline{I^{+}(x)}$, and hence, one has 
		\begin{equation}\label{y_Wt0x>eta}
			{\rm \underline{dist}}(\{y\}, W_{x}^{t_{0}})>\eta.
		\end{equation}
		However, noticing that $y\in \varphi_{t_{0}}(W_{x})$, it follows from \eqref{le} that $${\rm \underline{dist}}(\{y\}, W_{x}^{t_{0}})\leq d_{\mathcal{H}}(\varphi_{t_{0}}(W_{x}),W_{x}^{t_{0}})\leq\eta,$$
		which clearly contradicts \eqref{y_Wt0x>eta}.
		Thus, we have proved $\varphi_{t_{0}}(W_{x})\subset  \overline{I^{+}(x)}$, and hence, $\varphi_{t_{0}}(W_{x})\subset W_{x}$, which completes the proof.
\end{proof}

Now, we are ready to prove the Lemma \ref{sep_n}.

\begin{proof}[Proof of Lemma \ref{sep_n}]	
    (i) $x\in \partial I^{+}(x)\cup \partial I^{-}(x)$. By the Proposition \ref{chronological boundary}, it has $x\in \partial I^{+}(x)$. Thus, to prove $\mathcal{R}(\varphi)\cap \partial I^{+}(x)=\{x\}$, it suffices to prove $\mathcal{R}(\varphi)\cap (\partial I^{+}(x)\backslash\{x\})=\emptyset$. We prove it by contradiction. Suppose that there is $z\in \mathcal{R}(\varphi)$ with $x\neq z$ and $z\in \partial  I^{+}(x)$. By Proposition \ref{W_{x}}, there are some $\eta>0$ and $t_0>1$ such that $${\rm \underline{dist}}(\varphi_{t_{0}}(W_{x}),\partial  I^{+}(x))>\eta.$$
    Note $\lim_{t\rightarrow0}d_{\mathcal{H}}(\varphi_{t_{0}}(W_{x}), \varphi_t(\varphi_{t_{0}}(W_{x})))=0$ (it follows from $\lim_{t\rightarrow0} d(u, \varphi_t (\varphi_{t_{0}}(W_{x})))=0$ and $\lim_{t\rightarrow0} d(\varphi_{t_{0}}(W_{x}), \varphi_t(u))=0$ for any $u\in \varphi_{t_{0}}(W_{x})$). There exists $\varepsilon_{0}>0$ such that for any $|t|<\varepsilon_{0}$,
	\begin{equation*}
		d_{\mathcal{H}}\left(\varphi_{t_{0}}(W_{x}), \varphi_{t_{0}+t} (W_{x})\right)<\frac{1}{2}\eta.
	\end{equation*}
    Furthermore, by (\ref{dis_2}), for any $|t|<\varepsilon_{0}$, one has
	\begin{equation}\label{dis_betw_Wx_pCx}
	    {\rm \underline{dist}}(\varphi_{t_{0}+t} (W_{x}), \partial  I^{+}(x))
        \geq {\rm \underline{dist}}(\varphi_{t_{0}}(W_{x}), \partial  I^{+}(x))-d_{\mathcal{H}}(\varphi_{t_{0}}(W_{x}),\varphi_{t_{0}+t} (W_{x}))
        \geq \frac{1}{2}\eta
	\end{equation}
	Denote the time-set by $\mathcal{T}(t_0,\varepsilon_0)\triangleq\{nt_0+t:n\in\mathbb{N},|t|<\varepsilon_0\}$. 
	Together with $\varphi_{t_{0}}(W_{x})\subset W_{x}$ in Proposition \ref{W_{x}}, \eqref{dis_betw_Wx_pCx} entails
	\begin{equation}\label{ds}
		{\rm \underline{dist}}(\varphi_{s} (W_{x}), \partial  I^{+}(x))>\frac{1}{2}\eta,\quad \text{ for any } s\in\mathcal{T}(t_0,\varepsilon_0).
	\end{equation}
	Recall that $z\in W_x$ and $z\in \partial  I^{+}(x)$. Then, by \eqref{ds}, we have that $$d(\varphi_{s}(z),z)>\frac{1}{2}\eta,\quad \text{ for any } s\in \mathcal{T}(t_0,\varepsilon_0).$$
	Thus, $$\{t>0: d(\varphi_{t}(z),z)<\frac{1}{2}\eta\}\cap \mathcal{T}(t_{0},\varepsilon_{0})=\emptyset.$$ This contradicts to the properties of the IP-set, as further explained in the remark below. Therefore, it has $\mathcal{R}(\varphi)\cap (\partial I^{+}(x)\backslash\{x\})=\emptyset$, and hence, $\mathcal{R}(\varphi)\cap \partial I^{+}(x)=\{x\}$. With a similar argument, we obtain $\mathcal{R}(\varphi)\cap \partial I^{-}(x)=\{x\}$. Thus, we have proved $\mathcal{R}(\varphi)\cap (\partial  I^{+}(x)\cup \partial  I^{-}(x))=\{x\}$ if $x\in \partial I^{+}(x)\cup \partial I^{-}(x)$.

    (ii) $x\in  I^{+}(x)\cup I^{-}(x)$. We first prove $\mathcal{R}(\varphi)\cap \partial  I^{+}(x)=\emptyset$ by contradiction. Suppose that there is a point $z\in \mathcal{R}(\varphi)\cap\partial  I^{+}(x)$. Clearly, $z\neq x$ (since $x\in I^{+}(x)$ by the Proposition \ref{chronological boundary} (i) and the assumption $x\in I^{+}(x)\cup I^{-}(x)$). Thus, following the proof in (i), one can obtain a contradiction to the properties of the IP-set. Thus, we have proved $\mathcal{R}(\varphi)\cap \partial  I^{+}(x)=\emptyset$ if $x\in I^{+}(x)\cup I^{-}(x)$. Similarly, one can obtain $\mathcal{R}(\varphi)\cap \partial  I^{-}(x)=\emptyset$. Thus, we have proved $\mathcal{R}(\varphi)\cap (\partial  I^{+}(x)\cup \partial  I^{-}(x))=\emptyset$ if $x\in I^{+}(x)\cup I^{-}(x)$.	
\end{proof}

\begin{rmk}
    We further elaborate on the properties of IP-sets in this proof (see more details in \cite[Appendix A]{SWZ-25} and \cite{F81}). We say a set $S\subset\mathbb{R}^+$ is an IP-set if there is a sequence $\{p_i\}_{i\geq1}\subset\mathbb{R}^+$ such that $$S =\{ p_{i_1} + \cdots + p_{i_k} : i_1 < \cdots < i_k,\ k \ge 1\}.$$ By the construction of IP-set $S$, it can be proved (see the proof in \cite[Appendix A]{SWZ-25}) that for any $\tau>0$ and $\varepsilon>0$, $$S\cap \mathcal{T}(\tau,\varepsilon)\neq\emptyset, \text{ where } \mathcal{T}(\tau,\varepsilon)=\{n\tau+t:n\in\mathbb{N},|t|<\varepsilon\}.$$ For any recurrent point $z$ and any recurrent-time set $N(z,\theta)$ of $z$ parameterized by $\theta$ $$N(z,\theta)\triangleq\{t>0: d(\varphi_{t}(z),z)<\theta\},$$ we can construct an IP-set in it (see more details about construction in \cite[Appendix A]{SWZ-25} or \cite[Proposition 8.10]{F81}). Thus, $N(z,\theta)\cap \mathcal{T}(\tau,\varepsilon)\neq\emptyset$.
\end{rmk}

\subsection{Limit-set dichotomy for recurrent points}\label{s3.3}
In this subsection, we present a technical lemma, called as ``\textit{limit-set dichotomy for recurrent points}", which turns out to be crucial to obtain the intersection principle.

\begin{lem}\label{limit_set_dichotomy}
	Let $x,y\in \mathcal{R}(\varphi)$. Then either \begin{enumerate}[{\rm (i)}]
	\item $\omega(x)$, $\omega(y)$ are strongly ordered, or
    \item $\omega(x)$, $\omega(y)$ are unordered.
    \end{enumerate}
\end{lem}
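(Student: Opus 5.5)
Throughout, I read the dichotomy as a statement about pairs taken across the two sets: in case (i) every $a\in\omega(x)$ and $b\in\omega(y)$ are strongly ordered, and in case (ii) every such pair is unordered. Since $x\in\omega(x)$ and $y\in\omega(y)$, this is consistent with Proposition \ref{ou}, which already gives the dichotomy for each set separately. So I assume case (ii) fails and derive case (i).

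\emph{Step 1 (produce one strong relation between the orbits).} Suppose $a\in\omega(x)$ and $b\in\omega(y)$ are ordered, say $a\leq_M b$. If $a\neq b$, Proposition \ref{monotone property} gives $\varphi_\tau(a)\ll_M\varphi_\tau(b)$ for every $\tau>0$. Openness of $I^\pm$ (Proposition \ref{openness}) then yields neighbourhoods $U\ni\varphi_\tau(a)$ and $V\ni\varphi_\tau(b)$ with $U\ll_M V$. Since $a\in\omega(x)$ and $b\in\omega(y)$, there are times $s,t$ with $\varphi_s(x)\in U$ and $\varphi_t(y)\in V$, hence $\varphi_s(x)\ll_M\varphi_t(y)$. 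If instead $a=b$, I would perturb along the orbit of $a$, or use Proposition \ref{pseudoorder-orbit} to obtain a strict relation; this degenerate case needs separate care.

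\emph{Step 2 (move the relation onto $x$ and the whole orbit of $y$).} Put $x'=\varphi_s(x)$, which is again recurrent. By Step 1, $O(y)\cap I^+(x')\neq\emptyset$. The orbit $O(y)$ is a connected curve, all of whose points are recurrent. Suppose some point of $O(y)$ lies outside $I^+(x')$. Since $I^+(x')$ is open, the curve must then meet $\partial I^+(x')$ at a recurrent point. This contradicts Lemma \ref{sep_n}, except in the case $x'\in\partial I^+(x')$ where the crossing point is $x'$ itself. In that exceptional case $x'\in O(y)$, so $\omega(x)=\omega(y)$, and Proposition \ref{ou} settles the lemma directly. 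Therefore $O(y)\subset I^+(x')$, and passing to closures gives $\omega(y)\subset\overline{O(y)}\subset\overline{I^+(x')}$.

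\emph{Step 3 (upgrade to all pairs).} Let $w\in\omega(y)$ and $r>0$. Since $\varphi_{-r}(w)\in\omega(y)\subset\overline{I^+(\varphi_{-r}(x'))}$ by the translated version of Step 2, Proposition \ref{quasi-closed-prop} and Proposition \ref{monotone property} give $x'\ll_M w$, provided the points differ; when they coincide, I fall back on the exceptional case above. The same argument applies with $x'$ replaced by any point of $O^+(x')$. To pass to arbitrary $c\in\omega(x)$, I would again use the crossing argument: the connected curve $O(x)$ cannot meet $\partial I^-(w)$ at a recurrent point, by Lemma \ref{sep_n}, so $O(x)\subset I^-(w)$. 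Hence $\omega(x)\subset\overline{I^-(w)}$, and one more application of strong monotonicity turns this into $c\ll_M w$. The case $b\leq_M a$ is symmetric.

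\emph{Main obstacle.} The hardest step is the connectedness argument in Step 2: I must ensure that the crossing point of $O(y)$ with $\partial I^+(x')$ is genuinely a recurrent point different from $x'$ before Lemma \ref{sep_n} applies. This is where the two cases $x'\in\partial I^+(x')$ and $x'\in I^+(x')$ of Lemma \ref{sep_n} have to be handled separately.
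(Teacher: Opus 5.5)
Your argument takes a genuinely different route from the paper's and is sound in outline, but one step fails as written.

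\textbf{The gap in Step 3.} In the second half of Step 3 you apply Lemma \ref{sep_n} to $\partial I^-(w)$ for an arbitrary $w\in\omega(y)$. That lemma needs its base point to be recurrent, and points of $\omega(y)$ need not be. Only the points of $O(y)$ are guaranteed recurrent, since $\varphi_t(y)\in\omega(y)=\omega(\varphi_t(y))$.

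Likewise, the ``translated version of Step 2'' at $\varphi_{-r}(x')$ needs $O(y)\cap I^+(\varphi_{-r}(x'))\neq\emptyset$, which Step 1 does not supply. You would have to choose $x'$ far enough along $O^+(x)$ and use that $O(y)\subset I^+(x')$ propagates forward under the flow.

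\textbf{How to repair it.} Both problems disappear if you stay on orbits. After Step 2, each $w\in O(y)$ is recurrent and satisfies $x'\ll_M w$. So the connected set $O^+(x')$ meets the open set $I^-(w)$. If $O^+(x')$ left $I^-(w)$, it would meet $\partial I^-(w)$ at a recurrent point. By Lemma \ref{sep_n} that point must be $w$ itself, forcing $w\in O(x)$ and hence $\omega(x)=\omega(y)$; this is your exceptional case, settled by Proposition \ref{ou}. Otherwise $u\ll_M w$ for all $u\in O^+(x')$ and $w\in O(y)$.

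Now take $c\in\omega(x)$ and $w\in\omega(y)$ with $c\neq w$. Approximate $\varphi_{-1}(c)$ by points $u_n\in O^+(x')$ and $\varphi_{-1}(w)$ by points $w_n\in O(y)$. Then (H2) gives $\varphi_{-1}(c)\leq_M\varphi_{-1}(w)$. These two points are distinct, so Proposition \ref{monotone property}(i) gives $c\ll_M w$.

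\textbf{A remark on Step 1.} The paper's notions of the pair $\omega(x),\omega(y)$ being strongly ordered or unordered quantify only over $p\neq q$. So the degenerate case $a=b$ in Step 1 never arises. Under your reading, which includes $p=q$, alternative (ii) could never hold when $\omega(x)\cap\omega(y)\neq\emptyset$.

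\textbf{Comparison with the paper.} The paper first proves an absorption principle at the level of limit sets, via Propositions \ref{sep_n_1}, \ref{sep_n_2} and Corollary \ref{absorb}: if $x\notin\omega(y)$ and $\omega(y)$ meets $I^{\pm}(x)$, then $\omega(y)\subset I^{\pm}(x)$. It then runs a case analysis:
\begin{itemize}
\item one of $x,y$ is an equilibrium;
\item $\omega(x)\cap\omega(y)\neq\emptyset$, split by whether $\omega(x)$ is strongly ordered or unordered (Proposition \ref{ou});
\item $\omega(x)\cap\omega(y)=\emptyset$, handled through the assertion that no point of $\omega(y)$ lies on $\partial I^{\pm}$ of a point of $\omega(x)$.
\end{itemize}

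Your proof replaces all of this with two orbit-level crossing arguments: $O(y)$ against $\partial I^+(x')$, then $O^+(x')$ against $\partial I^-(w)$. There is a single exceptional case, and you derive alternative (i) directly from the failure of (ii). The result is shorter and avoids Corollary \ref{absorb} entirely. Once repaired as above, it only ever invokes Lemma \ref{sep_n} at points on orbits of recurrent points.

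By contrast, the paper's cases (ia), (ib) and (ii) apply Corollary \ref{absorb} with base points $y'\in\omega(y)$, which are not known to be recurrent. So the orbit-level formulation is also the more careful one on exactly the point where your Step 3 slipped.
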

Here, $\omega(x)$, $\omega(y)$ are called \textit {strongly ordered} if for any $p\in \omega(x)$ and any $q\in \omega(y)$ with $p\neq q$, $p$ and $q$ are strongly ordered; $\omega(x)$, $\omega(y)$ are called \textit{unordered}, if for any $p\in \omega(x)$ and any $q\in \omega(y)$ with $p\neq q$, $p$ and $q$ are unordered.

Before proving it, we need some technical tools.
	\begin{prop}\label{sep_n_1}
		Let $x\in \mathcal{R}(\varphi)$ with $x\in \partial I^{+}(x)\cup\partial I^{-}(x)$. Then, for any $y\in \mathcal{R}(\varphi)$, 
		\begin{equation*}
			\omega(y)\cap(\partial I^{+}(x)\backslash\{x\})=\emptyset \text{~and~} \omega(y)\cap(\partial I^{-}(x)\backslash\{x\})=\emptyset.
		\end{equation*}
	\end{prop}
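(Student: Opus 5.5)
The plan is to argue by contradiction and reduce to the mechanism behind Lemma~\ref{sep_n}(i). We treat only $\partial I^{+}(x)$; the case of $\partial I^{-}(x)$ is symmetric. By Proposition~\ref{chronological boundary}(ii), the hypothesis gives $x\in\partial I^{+}(x)\cap\partial I^{-}(x)$. Suppose $w\in\omega(y)\cap(\partial I^{+}(x)\backslash\{x\})$. By Proposition~\ref{quasi-closed-prop}, $x<_M w$ and $w\notin I^{+}(x)$. Since $y\in\mathcal{R}(\varphi)$, we have $\omega(y)=\overline{O(y)}$, and every point of $O(y)$ is recurrent. We split into two cases according to whether $x\in\omega(y)$.

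\emph{Case 1: $x\in\omega(y)$.} We copy the proof of Proposition~\ref{notin}.
\begin{enumerate}[(a)]
\item Fix $t_0>0$. Proposition~\ref{monotone property}(i) gives $\varphi_{t_0}(x)\ll_M\varphi_{t_0}(w)$.
\item By Proposition~\ref{strong flow open relation}, there are neighbourhoods of $\varphi_{t_0}(x)$ and of $\varphi_{t_0}(w)$ whose points are strongly ordered.
\item Both $\varphi_{t_0}(x)$ and $\varphi_{t_0}(w)$ lie in $\omega(y)$, so $O^+(y)$ visits both neighbourhoods. This yields $s_1\neq s_2$ with $\varphi_{s_1}(y)\ll_M\varphi_{s_2}(y)$, hence a pseudo-ordered recurrent orbit.
\item Proposition~\ref{pseudoorder-orbit} and the argument of Proposition~\ref{ou} then show that any two points of $\omega(y)$ satisfy both $\ll_M$ relations.
\item In particular $x\ll_M w$, i.e.\ $w\in I^{+}(x)$, which contradicts $w\in\partial I^{+}(x)$.
\end{enumerate}

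\emph{Case 2: $x\notin\omega(y)$.} We rerun the construction of Subsection~\ref{s3.2} with $\omega(z)$ replaced by $\omega(y)$. Nothing in Propositions~\ref{R(x_t)} and~\ref{W_{x}} used recurrence of $z$: they only need $\omega(z)$ compact and invariant, $x\notin\omega(z)$, and $x$ recurrent.
\begin{enumerate}[(a)]
\item Choose an annulus $A$ around $x$ with $\omega(y)\subset{\rm Int}A$, and set $W_x=\omega(y)\cap\overline{I^{+}(x)}$. Then $w\in W_x$.
\item Proposition~\ref{W_{x}} gives $t_0>1$ and $\eta>0$ with $\varphi_{t_0}(W_x)\subset W_x$ and ${\rm\underline{dist}}(\varphi_{t_0}(W_x),\partial I^{+}(x))>\eta$.
\item As in the proof of Lemma~\ref{sep_n}, this gives ${\rm\underline{dist}}(\varphi_s(W_x),\partial I^{+}(x))>\eta/2$ for all $s\in\mathcal{T}(t_0,\varepsilon_0)$.
\end{enumerate}
In Lemma~\ref{sep_n} the contradiction came from recurrence of the boundary point itself via IP-sets. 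Here $w$ need not be recurrent, and this is the main obstacle. We must replace $w$ by a recurrent point $u=\varphi_\tau(y)$ with three properties: it lies in $W_x$, it is within $\eta/4$ of $w$, and hence it is within $\eta/4$ of $\partial I^{+}(x)$. Granted such a $u$, the IP-set remark gives some $s\in N(u,\eta/4)\cap\mathcal{T}(t_0,\varepsilon_0)$. Then $\varphi_s(u)\in\varphi_s(W_x)$ lies within $\eta/2$ of $\partial I^{+}(x)$, a contradiction.

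To guarantee $u\in\overline{I^{+}(x)}$, the first thing to try is to shift the base point. Replace $x$ by a slightly earlier point $x_{-\sigma}$, which is still recurrent and still lies on its own chronological boundary. By Proposition~\ref{monotone property}(i), $x_{-\sigma}\ll_M x<_M w$ for small $\sigma>0$, so $w$ lies in the open set $I^{+}(x_{-\sigma})$, and nearby orbit points of $y$ lie in $\overline{I^{+}(x_{-\sigma})}$. Applying (H3), namely inner continuity together with the Hausdorff continuity of $\overline{I^{+}_r}$ used in Proposition~\ref{R(x_t)}, the boundary $\partial I^{+}(x_{-\sigma})$ is Hausdorff-close to $\partial I^{+}(x)$ near $\omega(y)$. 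We then run the $W$-construction at $x_{-\sigma}$ with constants uniform in small $\sigma$, obtained by the compactness argument of Proposition~\ref{R(x_t)}. This should place $u$ inside $W_{x_{-\sigma}}$ and within a small fraction of $\eta$ of $\partial I^{+}(x_{-\sigma})$, which is what the IP-set argument needs. If this uniformity fails, the fallback is to apply Proposition~\ref{ou} to $\omega(y)$. The strongly ordered alternative is handled exactly as in Case 1, and in the unordered alternative one uses the boundary point $w$ to derive directly that orbit points of $y$ lie in $\overline{I^{+}(x)}$.
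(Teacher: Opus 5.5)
Your Case 1 is correct and is exactly the paper's first case: Proposition~\ref{ou} applied to $\omega(y)\ni x,w$ gives $x\ll_M w$.

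\textbf{The gap is in Case 2.} Rerunning Subsection~\ref{s3.2} with $\omega(y)$ in place of $\omega(z)$ is legitimate as far as it goes; you are right that recurrence of $z$ is not used there. But the final IP-set step needs a \emph{recurrent} point $u\in W_x=\omega(y)\cap\overline{I^{+}(x)}$ close to $\partial I^{+}(x)$, and you never produce one. The orbit points of $y$ accumulating at $w$ may all lie outside $\overline{I^{+}(x)}$.

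Your proposed remedies do not close this:
\begin{itemize}
\item Proposition~\ref{monotone property}(i) only transports an existing relation $a<_M b$ forward in time. It gives no relation between $\varphi_{-\sigma}(x)$ and $x$. In fact $\varphi_{-\sigma}(x)\ll_M x$ is false in general: if $x$ is an equilibrium it reads $x\ll_M x$, which $x\in\partial I^{+}(x)$ excludes.
\item The Hausdorff closeness of $\partial I^{+}(\varphi_{-\sigma}(x))$ to $\partial I^{+}(x)$ and the uniformity of constants in $\sigma$ are asserted, not proved.
\item The fallback also fails. When $x\notin\omega(y)$, knowing that $\omega(y)$ is strongly ordered says nothing about $x$ versus $w$, so that branch cannot be handled ``exactly as in Case 1''.
\end{itemize}

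\textbf{The missing idea} is to shift the whole configuration, base point and boundary point together, in time. Then use the connectedness of the orbit of $y$, which consists of recurrent points, to land a recurrent point exactly on a chronological boundary. Lemma~\ref{sep_n} then finishes, with no further IP-set argument.

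Concretely, split $x\notin\omega(y)$ into three cases.
\begin{enumerate}
\item $\omega(y)\subset\partial I^{+}(x)$. Then $y$ itself is a recurrent point of $\partial I^{+}(x)\setminus\{x\}$, contradicting Lemma~\ref{sep_n}(i).
\item $\omega(y)$ meets $I^{+}(x)$ at some $y_1$.
  \begin{itemize}
  \item For small $\tau>0$, $\varphi_{-\tau}(y_1)\in I^{+}(\varphi_{-\tau}(x))$ by (H3).
  \item Also $\varphi_{-\tau}(w)\notin\overline{I^{+}(\varphi_{-\tau}(x))}$, since otherwise Proposition~\ref{quasi-closed-prop} and SDP give $x\ll_M w$.
  \item Points of $O^{+}(y)$ near these two points lie in the open sets $I^{+}(\varphi_{-\tau}(x))$ and $M\setminus\overline{I^{+}(\varphi_{-\tau}(x))}$ respectively.
  \item So the orbit arc joining them meets $\partial I^{+}(\varphi_{-\tau}(x))$ at a recurrent point. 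This point differs from $\varphi_{-\tau}(x)$ because $\varphi_{-\tau}(x)\notin\omega(y)$.
  \item This contradicts Lemma~\ref{sep_n} at the recurrent base point $\varphi_{-\tau}(x)$.
  \end{itemize}
\item $\omega(y)\not\subset\overline{I^{+}(x)}$. This is the same argument with $\varphi_{\tau}$, which pushes $w$ into $I^{+}(\varphi_{\tau}(x))$, while outer continuity keeps a point of $\omega(y)$ outside $\overline{I^{+}(\varphi_{\tau}(x))}$.
\end{enumerate}
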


	\begin{proof}
		Suppose $x\in \mathcal{R}(\varphi)$ with $x\in \partial I^{+}(x)\cup\partial I^{-}(x)$. Then by the Proposition \ref{chronological boundary}, it has $x\in \partial I^{+}(x)\cap\partial I^{-}(x)$. Here, we only prove the case 
        \begin{equation}\label{omega_R}
			\omega(y)\cap(\partial  I^{+}(x)\backslash\{x\})=\emptyset,
		\end{equation}
        and the another case can be deduced by analogy. In order to obtain \eqref{omega_R}, we consider the following four cases, respectively.

        Case (i): $x\in\omega(y)$. The Proposition \ref{ou} implies \eqref{omega_R} immediately. (If not, there exists $a\in \omega(y)\cap\partial  I^{+}(x)$ with $a\neq x$, which means $a<_M x$ but not $a\ll_M x$ by Proposition \ref{quasi-closed-prop}, a contradiction to Proposition \ref{ou}.)

        Case (ii): $x\notin\omega(y)$ and $\omega(y)\cap  I^{+}(x)\neq\emptyset$.
		We show \eqref{omega_R} by contradiction.
		Suppose that there exists $y_{0} \in\omega(y)\cap\partial  I^{+}(x)$. Clear, $y_{0}\neq x$. Recalling $\omega(y)\cap  I^{+}(x)\neq\emptyset$, there is $y_{1} \in\omega(y)$ such that $y_{1}\in  I^{+}(x)$.
        Then, one can take a small $\tau>0$ such that $\varphi_{-\tau}(y_{1})\in  I^{+}(\varphi_{-\tau}(x))$ by (H3) (i.e., continuity of $I^{+}(x)$) and $\varphi_{-\tau}(y_{0})\notin  \overline{I^{+}(\varphi_{-\tau}(x))}$ by (H2) (or rather, Proposition \ref{quasi-closed-prop}) and the strong positivity of the system.
		Choose $z_i\in O^{+}(y)$ close to $\varphi_{-\tau}(y_i)$ for $i=0,1$,
		such that $z_1\in  I^{+}(\varphi_{-\tau}(x))$ and $z_0\notin  \overline{I^{+}(\varphi_{-\tau}(x))}$. Then, the connectedness of $O^{+}(y)$ implies that there exists a point $z\in O^{+}(y)$ such that $z\in \partial  I^{+}(\varphi_{-\tau}(x))$. Furthermore, $z\neq\varphi_{-\tau}(x)$ as $\varphi_{-\tau}(x)\notin O^{+}(y)$. Noticing that $z\in O^{+}(y)\subset \mathcal{R}(\varphi)$, we obtain a contradiction to Lemma \ref{sep_n}(i). Therefore, we obtain \eqref{omega_R}.

        Case (iii): $x\notin\omega(y)$ and $\omega(y)\not\subset  \overline{I^{+}(x)}$.
		We also show \eqref{omega_R} by contradiction.
		Suppose that there exists $y_{0} \in\omega(y)\cap\partial  I^{+}(x)$. Clear, $y_{0}\neq x$. Recalling $\omega(y)\not\subset  \overline{I^{+}(x)}$, there is $y_{2} \in\omega(y)$ such that $y_{2}\notin  \overline{I^{+}(x)}$.
        Choose $\tau'>0$ small so that $\varphi_{\tau'}(y_{2})\notin  \overline{I^{+}(\varphi_{\tau'}(x))}$ by (H3) (i.e., the continuity of $I^{+}$) and $\varphi_{\tau'}(y_{0})\in  I^{+}(\varphi_{\tau'}(x))$ by (H2) (or rather, Proposition \ref{quasi-closed-prop}) and the strong positivity of the system. Choose $z'_i\in O^{+}(y)$ close to $\varphi_{\tau'}(y_i)$ for $i=0,2$,
		such that $z'_0\in  I^{+}(\varphi_{\tau'}(x))$ and $z'_2\notin  \overline{I^{+}(\varphi_{\tau'}(x))}$. Then by the connectedness of $O^{+}(y)$, there is a point $z'\in O^{+}(y)$ such that $z'\in \partial  I^{+}(\varphi_{\tau'}(x))$. Moreover, since $\varphi_{\tau'}(x)\notin O^{+}(y)$, it has $z'\neq\varphi_{\tau'}(x)$. Noticing that $z'\in O^{+}(y)\subset \mathcal{R}(\varphi)$, one can obtain a contradiction to Lemma \ref{sep_n}(i). Therefore, we obtain \eqref{omega_R}.
        
        Case (iv): $x\notin\omega(y)$ and $\omega(y)\subset  \partial{I^{+}(x)}$. We show this case actually cannot happen. In fact, since $y\in \mathcal{R}(\varphi)$, it has $y\in \omega(y)\subset  \partial{I^{+}(x)}$. Noting $x\notin\omega(y)$ (which entails $x\neq y$), so $y\in \partial{I^{+}}(x)\backslash\{x\}$, which contradicts to Lemma \ref{sep_n}(i).

        This completes the proof.
	\end{proof}

    \begin{prop}\label{sep_n_2}
		Let $x\in \mathcal{R}(\varphi)$ with $x\in I^{+}(x)\cup I^{-}(x)$. Then for any $y\in \mathcal{R}(\varphi)$,
        \begin{equation*}
			\omega(y)\cap\partial I^{+}(x)=\emptyset \text{~and~} \omega(y)\cap\partial I^{-}(x)=\emptyset.
		\end{equation*}
	\end{prop}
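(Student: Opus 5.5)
We follow the four-case scheme of the proof of Proposition \ref{sep_n_1}, replacing Lemma \ref{sep_n}(i) by Lemma \ref{sep_n}(ii). The hypothesis $x\in I^{+}(x)\cup I^{-}(x)$ gives, by Proposition \ref{chronological boundary}(i), $x\in I^{+}(x)\cap I^{-}(x)$. Hence $x\notin \partial I^{\pm}(x)$, and the excluded point $x$ no longer needs special treatment. The same holds for every point $\varphi_s(x)$: if $\gamma$ is a closed strictly conal curve through $x$, then $\varphi_s\circ\gamma$ is again a closed strictly conal curve by Proposition \ref{monotone property}. Thus $\varphi_s(x)\in I^{+}(\varphi_s(x))$ for $s>0$, and the same conclusion for $s<0$ follows from recurrence of $x$. (Alternatively, $\varphi_s(x)\in\mathcal{R}(\varphi)$, and one can apply Lemma \ref{sep_n}(ii) directly, provided one checks $\varphi_s(x)\in I^{+}(\varphi_s(x))$.) We prove only $\omega(y)\cap\partial I^{+}(x)=\emptyset$; the statement for $\partial I^{-}(x)$ is symmetric.

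Suppose, to the contrary, that $y_0\in\omega(y)\cap\partial I^{+}(x)$. Then $y_0\neq x$.

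\emph{Case (a): $\omega(y)\subset \partial I^{+}(x)$.} Since $y\in\omega(y)$, we get $y\in\mathcal{R}(\varphi)\cap\partial I^{+}(x)$, which contradicts Lemma \ref{sep_n}(ii).

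\emph{Case (b): some $y_1\in\omega(y)$ lies in $I^{+}(x)$.} By Proposition \ref{quasi-closed-prop}, $x<_M y_0$ and $x\not\ll_M y_0$. Strong differential positivity then gives $\varphi_{-\tau}(y_0)\notin \overline{I^{+}(\varphi_{-\tau}(x))}$ for small $\tau>0$. Indeed, if $\varphi_{-\tau}(y_0)$ lay in this closure, then either it equals $\varphi_{-\tau}(x)$, impossible since $y_0\neq x$, or $\varphi_{-\tau}(x)<_M\varphi_{-\tau}(y_0)$, which forces $x\ll_M y_0$. On the other side, openness and continuity (H3) keep $\varphi_{-\tau}(y_1)\in I^{+}(\varphi_{-\tau}(x))$. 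Approximating both points by points of $O^{+}(y)$ and using connectedness of $O^{+}(y)$, we find $z\in O^{+}(y)\cap\partial I^{+}(\varphi_{-\tau}(x))$. This $z$ is recurrent, and $\varphi_{-\tau}(x)$ is a recurrent point lying in its own chronological future, so Lemma \ref{sep_n}(ii) applied to $\varphi_{-\tau}(x)$ gives a contradiction. No assertion $z\neq\varphi_{-\tau}(x)$ is needed.

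\emph{Case (c): some $y_2\in\omega(y)$ lies outside $\overline{I^{+}(x)}$.} Push forward by a small $\tau'>0$ instead. Then $\varphi_{\tau'}(y_0)\in I^{+}(\varphi_{\tau'}(x))$ by Proposition \ref{quasi-closed-prop} and Proposition \ref{monotone property}(i), while $\varphi_{\tau'}(y_2)$ stays outside $\overline{I^{+}(\varphi_{\tau'}(x))}$ by outer continuity (H3). The same connectedness argument produces a recurrent point of $O^{+}(y)$ on $\partial I^{+}(\varphi_{\tau'}(x))$, again contradicting Lemma \ref{sep_n}(ii).

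These three cases are exhaustive. If $\omega(y)$ is neither contained in $\partial I^{+}(x)$ nor contains a point outside $\overline{I^{+}(x)}$, then $\omega(y)\subset \overline{I^{+}(x)}$ and some point lies in $I^{+}(x)$, which is Case (b). Unlike in Proposition \ref{sep_n_1}, no separate case ``$x\in\omega(y)$'' is needed, because the argument never uses that the boundary point differs from $x$.

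The main obstacle is the verification in Case (b) that $\varphi_{-\tau}(y_0)$ leaves the closure $\overline{I^{+}(\varphi_{-\tau}(x))}$. This requires combining Proposition \ref{quasi-closed-prop} (closure implies $\leq_M$) with Proposition \ref{monotone property}(i) (strict order after positive time), and it relies on $y_0\neq x$. A second point to check is that the approximating points of $O^{+}(y)$ remain on the correct sides. This follows from openness of $I^{+}(\varphi_{\pm\tau}(x))$ and of the complement of its closure.
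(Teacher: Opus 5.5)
\textbf{Gap: the backward step in Case (b).} Your Cases (a) and (c) are sound and match the paper's Cases (iv) and (iii). In (c) you only need $\varphi_{\tau'}(x)\in I^{+}(\varphi_{\tau'}(x))$ for $\tau'>0$, and pushing a closed strictly conal curve forward under strong differential positivity gives exactly that. The gap is in Case (b), which is where your simplification sits. You drop the case $x\in\omega(y)$ and the check $z\neq\varphi_{-\tau}(x)$, so you must apply Lemma \ref{sep_n}(ii) at the base point $\varphi_{-\tau}(x)$. That requires $\varphi_{-\tau}(x)\in I^{+}(\varphi_{-\tau}(x))$, and your justification ``for $s<0$ this follows from recurrence of $x$'' does not work as stated.

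The set $S=\{p\in M: p\ll_M p\}$ is open and forward invariant, so it contains the forward orbit of $x$. Forward recurrence only gives $\varphi_{t_n-\tau}(x)\to\varphi_{-\tau}(x)$ with $\varphi_{t_n-\tau}(x)\in S$, and a limit of points of an open set need not lie in it. To carry $x\in S$ backwards you would need some earlier orbit point $\varphi_{-s}(x)$, $s>\tau$, in $S$. The natural source is a return to the neighbourhood $I^{+}(x)\cap I^{-}(x)\subset S$ in backward time, i.e.\ backward recurrence, and $x\in\omega(x)$ does not supply it. The point you flag as the main obstacle, that $\varphi_{-\tau}(y_0)$ leaves $\overline{I^{+}(\varphi_{-\tau}(x))}$, is routine; this one is not.

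\textbf{How to close it.} The claim is true, but proving it needs the order structure of $\omega(x)$.
\begin{itemize}
\item If $x$ is an equilibrium there is nothing to prove.
\item Otherwise, $I^{+}(x)$ is an open neighbourhood of $x$, so $x<_M\varphi_h(x)$ for small $h>0$.
\item Proposition \ref{pseudoorder-orbit} then gives $p\le_M q$ and $q\le_M p$ for all $p,q\in\omega(x)$.
\item Since $O(x)\subset\omega(x)$, take the distinct points $p=\varphi_{-\tau-1}(x)$ and $q=\varphi_{-\tau-1+h}(x)$. Proposition \ref{monotone property}(i) with time $1$ yields $\varphi_{-\tau}(x)\ll_M\varphi_{-\tau+h}(x)\ll_M\varphi_{-\tau}(x)$.
\end{itemize}
With this inserted, your route is correct, and the case $x\in\omega(y)$ really is unnecessary.

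The paper avoids the issue differently. It keeps the case $x\in\omega(y)$ and settles it with Proposition \ref{ou}. When $x\notin\omega(y)$, it gets $\varphi_{-\tau}(x)\notin\omega(y)\supset O^{+}(y)$, hence $z\neq\varphi_{-\tau}(x)$. Lemma \ref{sep_n} then excludes $z$ whichever of its alternatives (i) or (ii) holds at $\varphi_{-\tau}(x)$. So the paper never needs to know whether $\varphi_{-\tau}(x)$ lies in its own chronological future.
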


    \begin{proof}
		Suppose $x\in \mathcal{R}(\varphi)$ with $x\in I^{+}(x)\cup I^{-}(x)$. Then it follows from the Proposition \ref{chronological boundary} that $x\in I^{+}(x)\cap I^{-}(x)$. Here, we only prove the case $\omega(y)\cap\partial  I^{+}(x)=\emptyset$, and the another case can be deduced by analogy. Similar to the Proposition \ref{sep_n_1}, we consider the following four cases, respectively.

        Case (i): $x\in\omega(y)$. We will show $\omega(y)\cap\partial  I^{+}(x)=\emptyset$ by contradiction. Suppose there is $a\in \omega(y)\cap\partial  I^{+}(x)$. Clearly, $a\neq x$ as $x\in I^{+}(x)$. Then by Proposition \ref{quasi-closed-prop}, it has $a<_M x$ but not $a\ll_M x$, a contradiction to Proposition \ref{ou}. Therefore, we obtain $\omega(y)\cap\partial  I^{+}(x)=\emptyset$.

        Case (ii): $x\notin\omega(y)$ and $\omega(y)\cap  I^{+}(x)\neq\emptyset$.
		We show $\omega(y)\cap\partial  I^{+}(x)=\emptyset$ by contradiction.
		Suppose there exists $y_{0} \in\omega(y)\cap\partial  I^{+}(x)$. Clearly, it has $y_{0}\neq x$ since $x\in I^{+}(x)$. Since $\omega(y)\cap  I^{+}(x)\neq\emptyset$ and $x\notin\omega(y)$, there is $y_{1} \in\omega(y)\cap I^{+}(x)$. Then, one can take a small $\tau>0$ such that $\varphi_{-\tau}(y_{1})\in  I^{+}(\varphi_{-\tau}(x))$ by (H3) (i.e., continuity of $I^{+}(x)$) and $\varphi_{-\tau}(y_{0})\notin  \overline{I^{+}(\varphi_{-\tau}(x))}$ by (H2) (or rather, Proposition \ref{quasi-closed-prop}) and the strong positivity of the system.
		Choose $z_i\in O^{+}(y)$ close to $\varphi_{-\tau}(y_i)$ for $i=0,1$,
		such that $z_1\in  I^{+}(\varphi_{-\tau}(x))$ and $z_0\notin  \overline{I^{+}(\varphi_{-\tau}(x))}$. Then, the connectedness of $O^{+}(y)$ implies that there exists a point $z\in O^{+}(y)$ such that $z\in \partial  I^{+}(\varphi_{-\tau}(x))$. Furthermore, $z\neq\varphi_{-\tau}(x)$ as $\varphi_{-\tau}(x)\notin O^{+}(y)$. Noticing that $z\in O^{+}(y)\subset \mathcal{R}(\varphi)$, we obtain a contradiction to Lemma \ref{sep_n}(ii). Therefore, we obtain
        $\omega(y)\cap\partial  I^{+}(x)=\emptyset$.
        
        %Similar to the proof of Proposition \ref{sep_n_1}, one can take a small $\tau>0$ and a point $z\in O^{+}(y)$ such that $z\in \partial  I^{+}(\varphi_{-\tau}(x))$ and $z\neq\varphi_{-\tau}(x)$. Noticing that $z\in O^{+}(y)\subset \mathcal{R}(\varphi)$, we obtain a contradiction to Lemma \ref{sep_n}(ii). Therefore, we obtain $\omega(y)\cap\partial  I^{+}(x)=\emptyset$.

        Case (iii): $x\notin\omega(y)$ and $\omega(y)\not\subset  \overline{I^{+}(x)}$.
		We also show $\omega(y)\cap\partial  I^{+}(x)=\emptyset$ by contradiction.
		Suppose there exists $y_{0} \in\omega(y)\cap\partial  I^{+}(x)$. Clearly, $y_{0}\neq x$.
        Recalling $\omega(y)\not\subset  \overline{I^{+}(x)}$, there is $y_{2} \in\omega(y)$ such that $y_{2}\notin  \overline{I^{+}(x)}$. Choose $\tau'>0$ small so that $\varphi_{\tau'}(y_{2})\notin  \overline{I^{+}(\varphi_{\tau'}(x))}$ by (H3) (i.e., the continuity of $I^{+}$) and $\varphi_{\tau'}(y_{0})\in  I^{+}(\varphi_{\tau'}(x))$ by (H2) (or rather, Proposition \ref{quasi-closed-prop}) and the strong positivity of the system. Choose $z'_i\in O^{+}(y)$ close to $\varphi_{\tau'}(y_i)$ for $i=0,2$,
		such that $z'_0\in  I^{+}(\varphi_{\tau'}(x))$ and $z'_2\notin  \overline{I^{+}(\varphi_{\tau'}(x))}$. Then by the connectedness of $O^{+}(y)$, there is a point $z'\in O^{+}(y)$ such that $z'\in \partial  I^{+}(\varphi_{\tau'}(x))$. Moreover, since $\varphi_{\tau'}(x)\notin O^{+}(y)$, it has $z'\neq\varphi_{\tau'}(x)$. Noticing that $z'\in O^{+}(y)\subset \mathcal{R}(\varphi)$, one can obtain a contradiction to Lemma \ref{sep_n}(ii).
        Therefore, we obtain $\omega(y)\cap\partial  I^{+}(x)=\emptyset$.
        
        Case (iv): $x\notin\omega(y)$ and $\omega(y)\subset  \partial{I^{+}(x)}$. We will show this case cannot happen. Actually, since $y\in \omega(y)$ (as $y\in \mathcal{R}(\varphi)$), it has $y\in  \mathcal{R}(\varphi)\cap\partial{I^{+}(x)}$, which contradicts to Lemma \ref{sep_n}(ii).
        
        This completes the proof.
	\end{proof}

Combining Propositions \ref{sep_n_1} and \ref{sep_n_2}, we obtain the following corollary:

	\begin{cor}\label{absorb}
		Let $x,y\in \mathcal{R}(\varphi)$ with $x\notin\omega(y)$. Whether $x\in \partial I^{+}(x)\cup \partial I^{-}(x)$ or $x\in I^{+}(x)\cup I^{-}(x)$, the following results hold: 
        \begin{enumerate}[{\rm (i)}]
            \item If $\omega(y)\cap I^{+}(x)\neq\emptyset$, then $\omega(y)\subset I^{+}(x)$;
            \item If $\omega(y)\cap I^{-}(x)\neq\emptyset$, then $\omega(y)\subset I^{-}(x)$.
        \end{enumerate}
	\end{cor}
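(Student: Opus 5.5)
The corollary follows from Propositions \ref{sep_n_1} and \ref{sep_n_2} by a connectedness argument. I treat (i); (ii) is symmetric with $I^-$ and $\partial I^-$ in place of $I^+$ and $\partial I^+$.

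Fix $x,y\in\mathcal{R}(\varphi)$ with $x\notin\omega(y)$, and suppose $\omega(y)\cap I^{+}(x)\neq\emptyset$. First I show that $\omega(y)$ does not meet $\partial I^{+}(x)$. There are two cases.
\begin{enumerate}[{\rm (a)}]
\item If $x\in \partial I^{+}(x)\cup\partial I^{-}(x)$, Proposition \ref{sep_n_1} gives $\omega(y)\cap(\partial I^{+}(x)\backslash\{x\})=\emptyset$. Since $x\notin\omega(y)$, this upgrades to $\omega(y)\cap\partial I^{+}(x)=\emptyset$.
\item If $x\in I^{+}(x)\cup I^{-}(x)$, Proposition \ref{sep_n_2} gives $\omega(y)\cap\partial I^{+}(x)=\emptyset$ directly.
\end{enumerate}
By Proposition \ref{chronological boundary}, every point lies in exactly one of these two cases, so in either case $\omega(y)\cap\partial I^{+}(x)=\emptyset$.

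Next comes the connectedness step. By (H4) the orbit of $y$ has compact closure, so $\omega(y)$ is nonempty, compact and connected. Since $\partial I^{+}(x)=\overline{I^{+}(x)}\backslash I^{+}(x)$ and $I^{+}(x)$ is open by Proposition \ref{openness}, the space splits as
\[
M=I^{+}(x)\cup\partial I^{+}(x)\cup\bigl(M\backslash\overline{I^{+}(x)}\bigr),
\]
where the first and third pieces are disjoint open sets. By the previous step, $\omega(y)$ lies in the union of these two open sets. Connectedness forces $\omega(y)$ to lie entirely in one of them. It meets $I^{+}(x)$ by hypothesis, so $\omega(y)\subset I^{+}(x)$.

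The only delicate point is the removal of the exceptional point $x$ in case (a). There Proposition \ref{sep_n_1} excludes only $\partial I^{+}(x)\backslash\{x\}$, and this is exactly where the hypothesis $x\notin\omega(y)$ is used. Everything else is routine.
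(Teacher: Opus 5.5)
Your proof is correct and is essentially the paper's argument: both combine Propositions \ref{sep_n_1} and \ref{sep_n_2} with the connectedness of $\omega(y)$, and both use $x\notin\omega(y)$ only to remove the exceptional point $x$. The one difference is that the paper argues by contradiction (if $\omega(y)\not\subset I^{+}(x)$, connectedness produces a point of $\omega(y)$ in $\partial I^{+}(x)\backslash\{x\}$), whereas you argue directly by splitting $M$ into $I^{+}(x)$, $\partial I^{+}(x)$ and $M\backslash\overline{I^{+}(x)}$.
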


	\begin{proof}
		We show case (i) by contradiction. Suppose $\omega(y)\not\subset I^{+}(x)$ with $\omega(y)\cap I^{+}(x)\neq\emptyset$. Then by the connectedness of $\omega(y)$, it has $\omega(y)\cap\partial I^{+}(x)\neq\emptyset$. Moreover, since $x\notin\omega(y)$, it has $\omega(y)\cap(\partial I^{+}(x)\backslash\{x\})\neq\emptyset$, which is a contradiction to Proposition \ref{sep_n_1} (if $x\in \partial I^{+}(x)\cup \partial I^{-}(x)$) or Proposition \ref{sep_n_2} (if $x\in I^{+}(x)\cup I^{-}(x)$). Thus, we have obtained the case (i). Case (ii) can be deduced by analogy.
	\end{proof}

    Now, we are ready to prove the limit-set dichotomy for recurrent points.
	
	\begin{proof}[Proof of Lemma \ref{limit_set_dichotomy}]
    We first consider the case that one of $x,y$ is an equilibrium. Without loss of generality, we assume $x$ is an equilibrium, then $x=\omega(x)$. If $x\in \omega(y)$, Proposition \ref{ou} implies that any points in $\omega(y)$ are ordered with $\omega(x)(=x)$. Therefore, we obtain the lemma.
    If $x\notin \omega(y)$, it follows from Propositions \ref{sep_n_1} and \ref{sep_n_2} that $\omega(y)\cap(\partial I^{+}(x)\cup\partial I^{-}(x))=\emptyset$. Then Corollary \ref{absorb} entails that one of the following alternatives must occur: {\rm (1)} $\omega(y)\subset I^{+}(x)$; {\rm (2)} $\omega(y)\subset I^{-}(x)$; {\rm (3)} $\omega(y)\subset M\backslash \overline{I^{+}(x)\cup I^{-}(x)}$.
    So, the lemma follows. In the remainder, we assume both $x,y\in \mathcal{R}(\varphi)$ are not equilibrium.

	(i) $\omega(x)\cap\omega(y)\neq\emptyset$. Suppose $a\in \omega(x)\cap\omega(y)$. Proposition \ref{ou} shows that $\omega(x)$ is either strongly ordered or unordered, as well as $\omega(y)$. Thus, if $\omega(x)\subset\omega(y)$ or $\omega(y)\subset\omega(x)$, then the lemma can be obtained directly. So in the remainder of this case, we assume $x\notin \omega(y)$ and $y\notin \omega(x)$. We consider the following two situations: (ia) $\omega(x)$ is strongly ordered; (ib) $\omega(x)$ is unordered.

    (ia) $\omega(x)$ is strongly ordered. We claim that $\omega(y)$ is strongly ordered. If the claim holds, we prove the conclusion of the lemma. Since $a\in \omega(y)$, it follows from the proof of Proposition \ref{ou} that $a\in I^{+}(y')\cup I^{-}(y')$ for any $y'\in \omega(y)\backslash\{a\}$. 
    This, with Corollary \ref{absorb}, implies that $\omega(x)\subset I^{+}(y')\cup I^{-}(y')$ for any point $y'\in \omega(y)\backslash\omega(x)$. 
    As for $y'\in \omega(y)\cap\omega(x)$, it also follows from the proof of Proposition \ref{ou} that $\omega(x)\backslash \{y'\}\subset I^{+}(y')\cup I^{-}(y')$. 
    Thus, we obtain $\omega(x)$, $\omega(y)$ are strongly ordered. Now, we prove the claim. 
    In fact, if $\omega(y)$ is unordered, then there is $y_1\in \omega(y)$ such that $a\notin J^{+}(y_1)\cup J^{-}(y_1)$. So, $a\notin \overline{I^{+}(y_1)}\cup \overline{I^{-}(y_1)}$.
    Choose a point $x_1\in \omega(x)$ which is close enough to $a$ that $x_1\notin\overline{I^{+}(y_1)}\cup \overline{I^{-}(y_1)}$. 
    Together with the proof of Proposition \ref{ou}, the fact that $x_1,a\in \omega(x)$ implies that $x_1\in I^{+}(a)\cup I^{-}(a)$. 
    Thus, there is a point $x_{\tau_1}$ in $O(x)$ close enough to $x_1$ such that $x_{\tau_1}\in I^{+}(a)\cup I^{-}(a)$ and $x_{\tau_1}\notin\overline{I^{+}(y_1)}\cup \overline{I^{-}(y_1)}$. Note the fact that $x_{\tau_1}\in I^{+}(a)$ ($x_{\tau_1}\in I^{-}(a)$, respectively) implies $a\in I^{-}(x_{\tau_1})$ ($a\in I^{+}(x_{\tau_1})$, respectively) which implies $\omega(y)\cap (I^{+}(x_{\tau_1})\cup I^{-}(x_{\tau_1})) \neq\emptyset$ as $a\in \omega(y)$. Since $x_{\tau_1}\notin\omega(y)$ (as $x_{\tau_1}\in O(x)$ and $x\notin\omega(y)$), it has $\omega(y)\subset I^{+}(x_{\tau_1})\cup I^{-}(x_{\tau_1})$ by Corollary \ref{absorb}. 
    Then as $y_1\in \omega(y)$, we have $y_1\in I^{+}(x_{\tau_1})\cup I^{-}(x_{\tau_1})$, and hence, $x_{\tau_1}\in I^{+}(y_1)\cup I^{-}(y_1)$, which is a contradiction to $x_{\tau_1}\notin\overline{I^{+}(y_1)}\cup \overline{I^{-}(y_1)}$. So, we obtain the claim.

    (ib) $\omega(x)$ is unordered. Then $\omega(y)$ is unordered. (We prove it by contradiction. If $\omega(y)$ is strongly ordered, then there is $y_2\in \omega(y)$ such that $a\in I^{+}(y_2)\cup I^{-}(y_2)$. Recall that $a\in \omega(x)\cap\omega(y)$, one can choose a point $x_2\in \omega(x)$ close enough to $a$ that $x_2\in I^{+}(y_2)\cup I^{-}(y_2)$. Since $\omega(x)$ is unordered, it also has $x_2\notin\overline{I^{+}(a)}\cup \overline{I^{-}(a)}$. Thus, there is a point $x_{\tau_2}$ in $O(x)$ which is close enough to $x_2$ such that $x_{\tau_2}\in I^{+}(y_2)\cup I^{-}(y_2)$ and $x_{\tau_2}\notin\overline{I^{+}(a)}\cup \overline{I^{-}(a)}$. Then by a similar argument in situation (ia) as above, one can obtain a contradiction.) Since $a\in \omega(x)\cap\omega(y)$, it has $a\notin \overline{I^{+}(y')}\cup \overline{I^{-}(y')}$ for any $y'\in \omega(y)\backslash\{a\}$. This, together with Corollary \ref{absorb}, implies that $\omega(x) \cap (I^{+}(y')\cup I^{-}(y'))=\emptyset$ for any point $y'\in \omega(y)\backslash \omega(x)$. As for $y'\in \omega(y)\cap \omega(x)$, the fact that $\omega(x)$ is unordered implies that $(\omega(x)\backslash \{y'\}) \cap (I^{+}(y')\cup I^{-}(y'))=\emptyset$. So, we obtain that $(\omega(x)\backslash \{y'\}) \cap (I^{+}(y')\cup I^{-}(y'))=\emptyset$ for all $y'\in \omega(y)$. Thus, $\omega(x)$, $\omega(y)$ are unordered. (In fact, if there exist $p\in \omega(x)$ and $q\in \omega(y)$ such that $p<_M q$, then the strong positivity of the system entails that $\varphi_{t}(p)\in I^{-}(\varphi_{t}(q))$ for $t>0$, a contradiction.)

    (ii) $\omega(x)\cap\omega(y)=\emptyset$. We assert that \emph{there is no points $x_{0}\in\omega(x)$ and $y_{0}\in\omega(y)$ such that $y_{0}\in \partial I^{+}(x_{0})\cup \partial I^{-}(x_{0})$}. 
	Before proving this assertion, we show how it implies the conclusion of this lemma.
	In fact, choose any $x_*\in\omega(x)$, $y_*\in\omega(y)$. Since $\omega(x)\cap\omega(y)=\emptyset$, one has $x_*\neq y_*$. We consider the following two case:
	(iia) $y_*\in I^{+}(x_*)\cup I^{-}(x_*)$; (iib) $y_1\notin I^{+}(x_*)\cup I^{-}(x_*)$.
		
	(iia) $y_*\in I^{+}(x_*)\cup I^{-}(x_*)$. Then we have
	\begin{equation}\label{x1_approx_omegay}
		\omega(y)\subset I^{+}(x_*)\cup I^{-}(x_*).
	\end{equation}
	(Otherwise, one can find $y_3\in\omega(y)$ such that $y_3\notin I^{+}(x_*)\cup I^{-}(x_*)$. Noting $y_*\in I^{+}(x_*)\cup I^{-}(x_*)$, it follows from the connectedness of $\omega(y)$ that there is $y_4\in\omega(y)$ such that $y_4\in\partial I^{+}(x_*)\cup \partial I^{-}(x_*)$, contradicting the assertion.)
	Consequently, \eqref{x1_approx_omegay} implies that $\omega(y)\subset I^{+}(x')\cup I^{-}(x')$ for any $x'\in \omega(x)$.
	(Otherwise, there exist $x_3\in\omega(x)$ and $y_4\in\omega(y)$ such that $y_4\notin I^{+}(x_3)\cup I^{-}(x_3)$, equivalently $x_3\notin I^{+}(y_4)\cup I^{-}(y_4)$. But, note $x_*\in I^{+}(y_4)\cup I^{-}(y_4)$ by \eqref{x1_approx_omegay}, the connectedness of $\omega(x)$ implies that there is a $x_4\in\omega(x)$ such that $x_4\in \partial I^{+}(y_4)\cup \partial I^{-}(y_4)$, which contradicts the assertion). Thus, it has $\omega(x)$, $\omega(y)$ are strongly ordered.
		
	(iib) $y_*\notin I^{+}(x_*)\cup I^{-}(x_*)$. One can obtain that 
    \begin{equation}\label{x2_approx_omegay}
		\omega(y)\cap (I^{+}(x_*)\cup I^{-}(x_*))=\emptyset.
	\end{equation}
    (Otherwise, one can find $y_5\in\omega(y)$ such that $y_5\in I^{+}(x_*)\cup I^{-}(x_*)$. Note also that $y_*\notin I^{+}(x_*)\cup I^{-}(x_*)$. Then it follows from the connectedness of $\omega(y)$ that there is $y_6\in\omega(y)$ such that $y_6\in \partial I^{+}(x_*)\cup \partial I^{-}(x_*)$, contradicting the assertion.) 
    Then, for any $x'\in\omega(x)$, it has 
    \begin{equation}\label{x3_approx_omegay}
        \omega(y)\cap (I^{+}(x')\cup I^{-}(x'))=\emptyset.
    \end{equation} (Otherwise, there exist $x_5\in\omega(x)$ and $y_6\in\omega(y)$ such that $y_6\in I^{+}(x_5)\cup I^{-}(x_5)$, that is, $x_5\in I^{+}(y_6)\cup I^{-}(y_6)$. Note also $y_6\notin I^{+}(x_*)\cup I^{-}(x_*)$ by \eqref{x2_approx_omegay}, that is, $x_*\notin I^{+}(y_6)\cup I^{-}(y_6)$. 
    Then, by the connectedness of $\omega(x)$, one can obtain $x_6\in\omega(x)$ such that $x_6\in \partial I^{+}(y_6)\cup \partial I^{-}(y_6)$, a contradiction to the assertion). Furthermore, we show that $\omega(y)\cap (\overline{I^{+}(x')}\cup \overline{I^{-}(x')})=\emptyset$ for any $x'\in\omega(x)$. (If not, there exist $x_7\in\omega(x)$ and $y_7\in\omega(y)$ such that $y_7\in \overline{I^{+}(x_7)}\cup \overline{I^{-}(x_7)}$ with $x_7\neq y_7$, and then $\varphi_t(y_7)\in I^{+}(\varphi_t(x_7))\cup I^{-}(\varphi_t(x_7))$ for any $t>0$ by (H2) (or rather, Proposition \ref{quasi-closed-prop}) and the strong positivity of the system, a contradiction to \eqref{x3_approx_omegay}). So, $\omega(x)$, $\omega(y)$ are unordered.
    
    Therefore, we have proved the lemma provided that the assertion is confirmed.
		
	Finally, it remains to prove the assertion in (ii). Suppose that there exist $x_{0}\in\omega(x)$ and $y_{0}\in\omega(y)$ such that $y_{0}\in \partial I^{+}(x_{0})\cup \partial I^{-}(x_{0})$ with $x_0\neq y_0$. 
    Then by (H2) (or rather, Proposition \ref{quasi-closed-prop}) and the strong positivity of the system, we obtain that $\varphi_1(y_{0})\in I^{+}(\varphi_1(x_{0}))\cup I^{-}(\varphi_1(x_{0}))$. Since $\varphi_1(x_{0})\in \omega(x)$, there are $x_8\in O^+(x)$ close enough to $\varphi_1(x_{0})$ that $\varphi_1(y_{0})\in I^{+}(x_8)\cup I^{-}(x_8)$ by the continuity of $I^{+}$ and $I^{-}$. 
    Also since $\varphi_1(y_{0})\in \omega(y)$, choose $y_8\in O^+(y)$ close enough to $\varphi_1(y_{0})$ that $y_8\in I^{+}(x_8)\cup I^{-}(x_8)$. Since $x_8\notin \omega(y)$ (as $\omega(x)\cap\omega(y)=\emptyset$), according to Corollary \ref{absorb}, it has $\omega(y)\subset I^{+}(x_8)\cup I^{-}(x_8)$. 
    Thus, for any $y'\in \omega(y)$, it has  $y'\in I^{+}(x_8)\cup I^{-}(x_8)$, that is, $x_8\in I^{+}(y')\cup I^{-}(y')$. 
    So, it has $\omega(x)\subset I^{+}(y')\cup I^{-}(y')$ for any $y'\in \omega(y)$ by Corollary \ref{absorb} and the fact $y'\notin\omega(x)$ (as $\omega(x)\cap\omega(y)=\emptyset$). And hence, $x_0\in I^{+}(y_0)\cup I^{-}(y_0)$, that is $y_0\in I^{+}(x_0)\cup I^{-}(x_0)$, which contradicts with $y_{0}\in\partial I^{+}(x_{0})\cup \partial I^{-}(x_{0})$. Thus, we have proved the assertion and completed the proof.
\end{proof}

    \subsection{Proof of Lemma \ref{endpoint}}\label{s3.4}
	Now we are ready to prove the intersection principle for chronological boundary.

	\begin{proof}[Proof of Lemma \ref{endpoint}]
    (i): $x\in \partial I^{+}(x)\cup \partial I^{-}(x)$. By the Proposition \ref{chronological boundary}(ii), we have that $x\in \partial I^{+}(x)\cap \partial I^{-}(x)$.
    First, we prove $\mathcal{B}(\varphi) \cap \partial  I^{-}(x) = \{x\}$ by contradiction. Suppose there is $z\in \mathcal{B}(\varphi)\cap \partial  I^{-}(x)$ with $z\neq x$. By \textnormal{(H2)} (or rather, Proposition \ref{quasi-closed-prop}) and the strong positivity of the system, one can find a $\tau>0$ such that $\varphi_{\tau}(z)\ll_M\varphi_{\tau}(x)$ and $\varphi_{-\tau}(z)\notin \overline{I^{-}(\varphi_{-\tau}(x))}$. 
		
	Take a sequence $\{z_n\}_{n\geq1} \subset \mathcal{R}(\varphi)$ satisfying $z_n \rightarrow z$ ($\varphi_{\tau}(z_n)\rightarrow\varphi_{\tau}(z)$) as $n\rightarrow \infty$.
	Without loss of generality, we assume that $\varphi_{\tau}(z_{n})\ll_M\varphi_{\tau}(x)$ for all $n\geq1$. 
	Noticing $x,z_n\in \mathcal{R}(\varphi)$, one has $\varphi_{\tau}(x), \varphi_{\tau}(z_n)\in \mathcal{R}(\varphi)$. Then, by Lemma \ref{limit_set_dichotomy} for $\varphi_{\tau}(x)$ and $\varphi_{\tau}(z_n)$, we obtain that $\omega(x)$, $\omega(z_n)$ are strongly ordered for any $n\geq1$.
		
	On the other hand, there is a subsequence of $\{z_n\}_{n\geq1}$, which still labeled by $\{z_n\}_{n\geq1}$, such that $\varphi_{-\tau}(z_n)\notin \overline{I^{-}(\varphi_{-\tau}(x))}$ for all $n\geq1$.
	Then by Lemma \ref{limit_set_dichotomy} again, we obtain that $\omega(x)$, $\omega(z_n)$ are unordered for all $n\geq1$, a contradiction. Thus, we obtain that $\mathcal{B}(\varphi) \cap \partial  I^{-}(x) = \{x\}$. 
    
    We can obtain $\mathcal{B}(\varphi) \cap \partial  I^{+}(x) = \{x\}$ by analogy. Thus, we have proved that $\mathcal{B}(\varphi)\cap (\partial  I^{+}(x)\cup \partial  I^{-}(x))=\{x\}$ if $x\in \partial I^{+}(x)\cup \partial I^{-}(x)$.
     
    (ii): $x\in I^{+}(x)\cup I^{-}(x)$. By the Proposition \ref{chronological boundary}(i), we have that $x\in I^{+}(x)\cap I^{-}(x)$. We prove $\mathcal{B}(\varphi) \cap \partial  I^{-}(x) = \emptyset$ by contradiction. Suppose there is $z\in \mathcal{B}(\varphi)\cap \partial  I^{-}(x)$. Clearly, $z\neq x$ (since $x\in I^{-}(x)$ by the Proposition \ref{chronological boundary} (i) and the assumption $x\in I^{+}(x)\cup I^{-}(x)$). By \textnormal{(H2)} (or rather, Proposition \ref{quasi-closed-prop}) and the strong positivity of the system, one can find a $\tau'>0$ such that $\varphi_{\tau'}(z)\ll_M\varphi_{\tau'}(x)$ and $\varphi_{-\tau'}(z)\notin \overline{I^{-}(\varphi_{-\tau'}(x))}$. With a similar argument in Case (i), one can take a sequence $\{z_n\}_{n\geq1} \subset \mathcal{R}(\varphi)$ such that $\omega(x)$, $\omega(z_n)$ are strongly ordered for any $n\geq1$. And one can also take a subsequence $\{z_{n_i}\}_{n_i\geq1}$ of $\{z_n\}_{n\geq1}$ such that $\omega(x)$, $\omega(z_{n_i})$ are unordered for any ${n_i}\geq1$, a contradiction. Thus, we obtain that $\mathcal{B}(\varphi) \cap \partial  I^{-}(x) = \emptyset$. Similarly, $\mathcal{B}(\varphi) \cap \partial  I^{+}(x) = \emptyset$ can be obtained by analogy. Thus, we have proved that $\mathcal{B}(\varphi)\cap (\partial  I^{+}(x)\cup \partial  I^{-}(x))=\emptyset$ if $x\in I^{+}(x)\cup \partial I^{-}(x)$.
    
    Therefore, we have completed the proof.
	\end{proof}
	
	\noindent\section{Proof of Theorem \ref{Thm_2.1}}\label{Limit dich}

    In this section, we focus on the characterization of the order structure for $\mathcal{B}(\varphi)$ and prove the main theorem. The proof of Theorem \ref{Thm_2.1} heavily depends on the intersection principle (Lemma \ref{endpoint}), which has been established in section \ref{s3}. We present the detail proof as follows:
	
	\begin{proof}[Proof of Theorem \ref{Thm_2.1}]
		We prove this theorem by contradiction. Suppose that $B$ is neither unordered nor strongly ordered. First, we assert that
		\begin{equation}\label{equ-1}
			 \text{there is } x\in \mathcal{R}(\varphi)\cap B \text{ such that } B\cap (I^+(x)\cup I^-(x))\neq\emptyset \text{ and } B\cap (M\backslash \overline{I^+(x)\cup I^-(x)})\neq\emptyset.
		\end{equation}
		We only need to prove that 
		\begin{equation}\label{equ-2}
		\text{there is } a\in B \text{ such that } B\cap (I^+(a)\cup I^-(a))\neq\emptyset \text{~and~} B\cap (M\backslash \overline{I^+(a)\cup I^-(a)})\neq\emptyset.
		\end{equation}
		Indeed, by the continuity of $I^{+}$ and $I^{-}$, the assertion \eqref{equ-1} follows immediately as one selects a point $x\in \mathcal{R}(\varphi)\cap B$ sufficiently close to $a$.
        Now, we begin to prove \eqref{equ-2}. It follows from the fact that $B$ is not unordered that there are $b_1,b_2 \in B$ such that $b_2\in J^{+}(b_1)\cup J^{-}(b_1)$. 
        And the fact that $B$ is not strongly ordered entails that there exist $b_3,b_4 \in B$ such that $b_3\notin I^+(b_4)\cup I^-(b_4)$. 
        By the strong positivity of the system, for any $t>0$, it has $\varphi_{t}(b_2)\in I^+(\varphi_{t}(b_1))\cup I^-(\varphi_{t}(b_1))$. 
        Together with Proposition \ref{quasi-closed-prop}, the strong positivity of the system implies that $\varphi_{-t}(b_4)\notin\overline{I^+(\varphi_{-t}(b_3))\cup I^-(\varphi_{-t}(b_3))}$ for any $t>0$. 
        Fix $\tau>0$. If $\varphi_{-\tau}(b_3)\notin\overline{I^+(\varphi_{\tau}(b_1))\cup I^-(\varphi_{\tau}(b_1))}$, then we choose $a=\varphi_{\tau}(b_1)$. 
        Thus, $\varphi_{\tau}(b_2)\in B\cap (I^+(a)\cup I^-(a))$ and $\varphi_{-\tau}(b_3)\in B\cap (M\backslash \overline{I^+(a)\cup I^-(a)})$. 
        Therefore, we obtain \eqref{equ-2}.
		While, if $\varphi_{-\tau}(b_3)\in\overline{I^+(\varphi_{\tau}(b_1))\cup I^-(\varphi_{\tau}(b_1))}$, we can take small $s\in(0,\tau)$ and then $\varphi_{-\tau+s}(b_3)\in I^+(\varphi_{\tau+s}(b_1))\cup I^-(\varphi_{\tau+s}(b_1))$.
		Recall that $\varphi_{-t}(b_4)\notin\overline{I^+(\varphi_{-t}(b_3))\cup I^-(\varphi_{-t}(b_3))}$ for any $t>0$, it follows that $\varphi_{-\tau+s}(b_4)\notin\overline{I^+(\varphi_{-\tau+s}(b_3))\cup I^-(\varphi_{-\tau+s}(b_3))}$, which allows us to choose $a=\varphi_{-\tau+s}(b_3)$. 
        So, $\varphi_{\tau+s}(b_1)\in B\cap (I^+(a)\cup I^-(a))$ and $\varphi_{-\tau+s}(b_4)\in B\cap (M\backslash \overline{I^+(a)\cup I^-(a)})$. 
        Thus, we have \eqref{equ-2}, and hence, we obtain the assertion \eqref{equ-1}.

		Let $B_{I}=B\cap  \overline{I^+(x)\cup I^-(x)}$ and $B_{K}=B\cap (M\backslash (I^+(x)\cup I^-(x)))$. Clearly, by \eqref{equ-1}, $B_{I}\neq\emptyset$ since $B\cap (I^+(x)\cup I^-(x))\subset B_{I}$, and $B_{K}\neq\emptyset$ since $B\cap (M\backslash \overline{I^+(x)\cup I^-(x)})\subset B_{K}$. Moreover, $B_{I}$ and $B_{K}$ are closed in $B$, and $B=B_{I}\cup B_{K}$. By the relationship between $x$ and $I^{+}(x)\cup I^{-}(x)$, we consider the following two situations: (i) $x\in I^{+}(x)\cup I^{-}(x)$; (ii) $x\in \partial I^{+}(x)\cup \partial I^{-}(x)$.

        Case (i): $x\in I^{+}(x)\cup I^{-}(x)$. We show this case cannot happen. In fact, by virtue of Lemma \ref{endpoint}(ii) that $B_{I}\cap (\partial I^{+}(x)\cup \partial I^{-}(x))=\emptyset$, we have $B_{I}=B\cap  (I^+(x)\cup I^-(x))$, $B_{K}=B\cap (M\backslash \overline{I^+(x)\cup I^-(x))}$, and hence, $B_{I}\cap B_{K}=\emptyset$. Consequently, we obtain two disjoint closed subset $B_{I}$ and $B_{K}$ of $B$ such that $B=B_{I}\cup B_{K}$, which contradicts the connectedness of $B$.
        
		Case (ii): $x\in \partial I^{+}(x)\cup \partial I^{-}(x)$. By virtue of Lemma \ref{endpoint}(i) that $B_{I}\cap (\partial I^{+}(x)\cup \partial I^{-}(x))=\{x\}$, we have $B_{I}\backslash\{x\}=B\cap (I^+(x)\cup I^-(x))$, as well as $B_{K}\backslash\{x\}= B\cap (M\backslash\overline{I^+(x)\cup I^-(x)})$. Hence, by the connectedness of $B$, it has $B_{I}\cap B_{K}=\{x\}$.
		Furthermore, we will show that $B_{I}$ and $B_{K}$ are both connected. 
		We only prove that $B_I$ is connected. The proof of $B_K$ is analogous. Suppose $B_I$ is not connected. Then there are two disjoint nonempty closed subsets $B_{I_{1}}$ and $B_{I_{2}}$ such that $B_{I}=B_{I_{1}}\cup B_{I_{2}}$ and $x\in B_{I_{1}}$.
		Noticing that $B_{I_{2}}\cap B_{K}=\emptyset$,
		we have $(B_{I_{1}}\cup B_{K})\cap B_{I_{2}}=\emptyset$.
		Consequently, one can obtain two nonempty disjoint closed subsets $(B_{I_{1}}\cup B_{K})$ and $B_{I_{2}}$ of $B$ such that $B=(B_{I_{1}}\cup B_{K})\cup B_{I_{2}}$, which contradicts the connectedness of $B$. Thus, we have proved that $B_I$ is connected.
		
		Now, by virtue of \eqref{equ-1}, there exists $c\in B\cap (I^+(x)\cup I^-(x))$, i.e., $x\in I^+(c)\cup I^-(c)$ and $c\in B_{I}$. Together with the connectedness of $B_{K}$ and the fact $B_{K}\backslash \{x\}\neq \emptyset$, we can choose $x'\in B_{K}\backslash\{x\}$ close enough to $x$ such that $x'\in I^+(c)\cup I^-(c)$.
		Again, choose some $x^{*}\in \mathcal{R}(\varphi)\backslash\{x\}$ close to $x'$ such that $x^{*}\in I^+(c)\cup I^-(c)$ (since $x'\in I^+(c)\cup I^-(c)$) and $x^{*}\notin \overline{I^+(x)\cup I^-(x)}$ (since  $x'\notin \overline{I^+(x)\cup I^-(x)}$). 
		Note $x^{*}\in I^+(c)\cup I^-(c)$ is equivalent to $c\in I^+(x^{*})\cup I^-(x^{*})$ which means $B_I\cap (I^+(x^{*})\cup I^-(x^{*}))\neq\emptyset$. Also noting $x^{*}\notin \overline{I^+(x)\cup I^-(x)}$ implies $x^{*}\notin I^+(x)\cup I^-(x)$, equivalently $x\notin I^+(x^{*})\cup I^-(x^{*})$, with Lemma \ref{endpoint} that $x\notin \partial  I^{+}(x^{*})\cup \partial  I^{-}(x^{*})$, it has $x\notin \overline{I^+(x^{*})\cup I^-(x^{*})}$ which means $B_I\cap (M\backslash \overline{I^+(x^{*})\cup I^-(x^{*})})\neq\emptyset$. 
		Now, we have that both $B_I\cap (I^+(x^{*})\cup I^-(x^{*}))\neq\emptyset$ and $B_I\cap (M\backslash \overline{I^+(x^{*})\cup I^-(x^{*})})\neq\emptyset$ hold.

        Let $B_1=B_{I}\cap  (I^+(x^{*})\cup I^-(x^{*}))$, $B_2=B_{I}\cap (M\backslash\overline{I^+(x^{*})\cup I^-(x^{*})})$ and $B_3=B_{I}\cap  (\partial I^+(x^{*})\cup \partial I^-(x^{*}))$. Clearly, $B_1$ and $B_2$ are nonempty and relatively open in $B_I$. Moreover, $B_i\cap B_j=\emptyset$ for $i,j=1,2,3$ with $i\neq j$.
        If $x^{*}\in (I^+(x^{*})\cup I^-(x^{*}))$, then by Lemma \ref{endpoint}(ii), it has $B_3 = \emptyset$, which implies there are two nonempty disjoint relatively open subsets $B_1$ and $B_2$ satisfying $B_{I}=B_1\cup B_2$, a contradiction to the connectedness of $B_{I}$. 
        If $x^{*}\in (\partial I^+(x^{*})\cup \partial I^-(x^{*}))$, then by Lemma \ref{endpoint}(i), one has $\mathcal{B}(\varphi)\cap (\partial  I^{+}(x^{*})\cup \partial  I^{-}(x^{*}))=\{x^{*}\}$. Since $B_I\subset \mathcal{B}(\varphi)$ (thus, $B_3\subset\mathcal{B}(\varphi)\cap (\partial  I^{+}(x^{*})\cup \partial  I^{-}(x^{*}))$), we obtain $B_3 =\emptyset$ or $B_3=\{x^{*}\}$. Recalling $x^{*}\notin \overline{I^+(x)\cup I^-(x)}$, it has $x^{*}\notin B_I$, and hence, $x^{*}\notin B_3$. Therefor, it has $B_3=\emptyset$ which implies there are two nonempty disjoint relatively open subsets $B_1$ and $B_2$ satisfying $B_{I}=B_1\cup B_2$, a contradiction to the connectedness of $B_{I}$.

        Thus, we have proved that $B$ satisfies the order-dichotomy.
	\end{proof}

\end{document}